\numberwithin{equation}{section}
\newtheorem{theorem}{Theorem}[section]
\newtheorem{lemma}[theorem]{Lemma}
\newtheorem{Definition}{Definition}[section]
\newenvironment{definition}{\begin{Definition}\rm}{\end{Definition}}
\newtheorem{Rem}{Remark}[section]
\newenvironment{remark}{\begin{Rem}\rm}{\end{Rem}}
\newcommand\R{\mathbb{R}}
\newcommand\be{\begin{equation}}
\newcommand\ee{\end{equation}}
\newcommand\bea{\begin{eqnarray}}
\newcommand\eea{\end{eqnarray}}
\newcommand\beaa{\begin{eqnarray*}}
\newcommand\eeaa{\end{eqnarray*}}
\newcommand\bR{\mathbb{R}}
\newcommand{\ep}{\varepsilon}
\newcommand\tu{\tilde{u}}
\newcommand\tv{\tilde{v}}
\newcommand\bNu{{\mathcal{N}_1[u]}}
\newcommand\bNv{{\mathcal{N}_2[v]}}
\newcommand\upsi{{\underline{\psi}}}
\newcommand\uphi{{\underline{\phi}}}
\newcommand\ophi{{\overline{\phi}}}
\newcommand\opsi{{\overline{\psi}}}
\newcommand\phip{{\phi^+}}
\newcommand\phim{{\phi^-}}
\newcommand\psip{{\psi^+}}
\newcommand\psim{{\psi^-}}
\newcommand\uns{\underline{s}}
\newcommand\ld{{\lambda}}
\newcommand\expl{e^{\lambda_1 z}}
\newcommand\tl{\tilde{\lambda}}
\begin{document}

\title[nonlocal dispersal and climate change]
{Persistence of species in a predator-prey system with climate change and either nonlocal or local dispersal}

\author[W. Choi]{Wonhyung Choi}
\address{Department of Mathematics, Tamkang University, Tamsui, New Taipei City 251301, Taiwan}
\email{whchoi@korea.ac.kr}

\author[T. Giletti]{Thomas Giletti}
\address{Institut Elie Cartan Lorraine, UMR 7502, University of Lorraine, 54506 Vandoeuvre-l\`es-Nancy, France}
\email{thomas.giletti@univ-lorraine.fr}

\author[J.-S. Guo]{Jong-Shenq Guo}
\address{Department of Mathematics, Tamkang University, Tamsui, New Taipei City 251301, Taiwan}
\email{jsguo@mail.tku.edu.tw}

\thanks{Date: \today. Corresponding author: J.-S Guo}

\thanks{This work was supported in part by the Ministry of Science and Technology of Taiwan under the grants 109-2811-M-032-508 and 108-2115-M-032-006-MY3, and by the CNRS-NCTS joint International Research Network ReaDiNet.}

\thanks{{\em 2010 Mathematics Subject Classification.} 35K45, 35K57, 35K55, 92D25}

\thanks{{\em Key words and phrases.} Persistence, predator-prey model, nonlocal dispersal, climate change, forced wave.}

\begin{abstract}
{We are concerned with the persistence of both predator and prey in a diffusive predator-prey system with a climate change effect, which is modeled by a spatial-temporal heterogeneity depending on a moving variable. Moreover, we consider both the cases of nonlocal and local dispersal. In both these situations, we first prove the existence of forced waves, which are positive stationary solutions in the moving frames of the climate change, of either front or pulse type. Then we address the persistence or extinction of the prey and the predator separately in various moving frames, and achieve a complete picture in the local diffusion case. We show that the survival of the species depends crucially on how the climate change speed compares with the minimal speed of some pulse type forced waves.}
\end{abstract}

\maketitle

\medskip

\section{Introduction}
\setcounter{equation}{0}

{Reaction-diffusion models and their nonlocal counterparts are commonly used in population dynamics to describe the behavior of the densities of concerned species,
including their survival or extinction, and the spatial spread of their habitat. More recently a lot of attention has been devoted to the effect of environmental heterogeneity which is ubiquitous in biological applications.
Here, specifically we would like to gain a better understanding of the consequences of a shifting heterogeneity in the context of prey-predator systems.
This is motivated by the modelling of global warming and its potentially dramatic consequences on ecological species, whose survival hinges on their ability to adapt and migrate according to these environmental changes.}

For the case of a single species, the following reaction-diffusion model has been proposed and attracted a lot of attention in the mathematical community:
\beaa
u_t(x,t)=du_{xx}(x,t)+u(x,t)f(x-st,u(x,t)),\; x\in\bR,\, t>0,
\eeaa
in which the function $f$ models the {climate change, which depends on a moving variable} with a positive speed $s$.
Here the species move with the standard random diffusion and the positive constant $d$ stands for the diffusion coefficient.
A typical example of $f$ is given by
\beaa
f(x-st,u(x,t))=\alpha(x-st)-u(x,t)
\eeaa
{for some function $\alpha$ which typically changes sign. In the set $\{z\mid\alpha(z)>0\}$, the linear growth rate is positive and one may refer to this set as the favorable region; similarly, the set~$\{ z \mid \alpha (z) <0\}$ is unfavorable to the species.}

{The goal is usually to derive criteria for the large-time survival or extinction of the species, and in the former case to understand its propagation. In this prospect, one of the main attentions is paid to the existence of a traveling wave solution with speed $s$, {\it the forced wave}, which is a positive stationary solution in the moving frame of the climate change. For this, we refer the reader to \cite{bd09} for the case of a bounded favorable region, where a dichotomy with respect to the climate change speed was established, as well as to~\cite{flw16,hz2017} for an unbounded favorable region, and~\cite{bf18} for some general KPP type nonlinearity. Forced waves and large-time behaviour of solutions of the Cauchy problem has also been studied in higher dimension~\cite{br08,br09}, including in a monostable (not necessarily KPP) case~\cite{bg19} where the dichotomy between extinction and survival also hinges on the size of the initial data. Moreover, propagation of the species in
 an unbounded favorable zone has been addressed in~\cite{lbsf2014}, and in time-periodic shifting habitat in~\cite{fpz21,v15}.

As far as systems involving several species are concerned, the existence and asymptotics of forced wave solutions is studied for a cooperative model in~\cite{ywl19}. For a Lotka-Volterra type competition model, the existence of forced waves was shown by Dong et al.~\cite{dll21}, and the persistence and extinction of species were established in~\cite{ywz19, zwy17}. Forced waves and gap formation in a competition model was also studied when the species' favorable habitats shift with opposite directions \cite{bdd14}.
Lastly, we also refer to some literature considering forced waves in domains with a free boundary \cite{dwz18,hhd20,hhsd20,ld17}.}

From the modelling point of view, it is sometimes relevant to replace the standard diffusion by a nonlocal dispersal which accounts for the long-range motion of some individuals. Concerning the study of the effects of climate change in this context, we refer the reader to some literature on the scalar equation which adresses both the persistence and extinction of the solution, as well as the existence and stability of the forced wave when the favorable zone is bounded~\cite{coville2020,LSZ}. A key point is the spectral analysis of an integro-differential operator; see~\cite{CovilleHamel} for related results. For the case when the favorable zone is unbounded, we refer to~\cite{lwz18} for the persistence of the species and the existence of the forced wave, and to~\cite{wz19} for the uniqueness and stability of the forced wave. We also point to~\cite{zz20} for propagation in a time-periodic shifting habitat. For a 2-species competition system, we refer to \cite{wwz19} for spatial-temporal dynamics and \cite{ww21} for the gap formation. In particular, it was shown in~\cite{lwz18} that there is a critical value for the climate change speed, below which the species manages to persist and above which it goes to extinction.

Yet much less is done for predator-prey systems, where new difficulties typically arise from a lack of a comparison principle. Therefore, in this paper, we consider the following diffusive predator-prey model with nonlocal dispersal
\begin{equation}\label{pp}
\begin{cases}
u_t(x,t)=d_{1}\bNu(x,t)+r_{1}u(x,t)[{\alpha(x-st)}-u(x,t)-av(x,t)], \; x\in\bR,\, t>0,\\
v_t(x,t)=d_{2}\bNv(x,t)+r_{2}v(x,t)[-1+bu(x,t)-v(x,t)], \; x\in\bR,\, t>0,%
\end{cases}
\end{equation}
where the unknown functions $u,v$ {respectively} stand for the population densities of prey and predator species at position $x$ and time $t$.
Parameters $d_1,d_2,r_1,r_2,a,b$ are positive and represent the diffusion coefficients, intrinsic growth rates, predation rate and conversion rate, respectively.
{As in the scalar equation case, the given positive constant $s$ denotes the climate change speed.}

Moreover, $\bNu(x,t)$ and $\bNv(x,t)$ formulate the spatial nonlocal dispersal of individuals and are given by
\beaa
\bNu(x,t) &:=& (J_1\ast u)(x,t)-u(x,t)=\int_{\mathbb{R}}J_{1}(x-y)u(y,t)dy-u(x,t), \\
\bNv(x,t) &:=& (J_2\ast v)(x,t)-v(x,t)=\int_{\mathbb{R}}J_{2}(x-y)v(y,t)dy-v(x,t),
\eeaa
in which $J_{i}$, $i=1,2$, are probability kernel functions satisfying the following conditions:
\begin{enumerate}
\item[(J1)] $J_i$ is nonnegative, continuous and compactly supported in $\bR$;

\item[(J2)] $\int_{\mathbb{R}}J_i(y)dy=1$ and $J_i(y)=J_i(-y)$ for all $y\in \mathbb{R}$.
\end{enumerate}

{Alternatively we will also consider the more classical case of a standard diffusion, that is
\begin{equation}\label{spp}
\begin{cases}
u_t(x,t)=d_{1} u_{xx}(x,t)+r_{1}u(x,t)[{\alpha(x-st)}-u(x,t)-av(x,t)], \; x\in\bR,\, t>0,\\
v_t(x,t)=d_{2} v_{xx}(x,t)+r_{2}v(x,t)[-1+bu(x,t)-v(x,t)], \; x\in\bR,\, t>0,%
\end{cases}
\end{equation}
where again the parameters $d_i$, $r_i$, $a$, $b$ are positive constants. Furthermore, in both local and nonlocal frameworks, the function $\alpha (\cdot)$ models the climate change which depends on a shifting variable,
and throughout the paper we assume that it satisfies the following properties:
\begin{enumerate}
\item[($\alpha$1)] $\alpha$ is continuous and nondecreasing in $\bR$;
\item[($\alpha$2)] $-\infty<\alpha(-\infty)<0<\alpha(\infty)<\infty$; furthermore, we choose $\alpha (\infty) =1$ without loss of generality (up to a rescaling).
\end{enumerate}
This means that the environment is favorable to the prey ahead of the climate change, then gradually deteriorates until it becomes hostile to the species.}\medskip

\section{Main results}\label{sec:results}

We are mainly concerned with the {question of persistence of both species, predator and prey, depending on the value of climate change speed~$s>0$.}

{On top of the previous assumptions, we also impose that
$$b >1,$$
which means that the predator population increases when the prey is at the maximal capacity. One may indeed check by comparison principles that the $v(\cdot,t) \to 0$ as $t \to +\infty$, uniformly in space, when $b < 1$.
In such a case \eqref{pp} formally reduces to a scalar equation which has already been studied in~\cite{lwz18}; see also \cite{lbsf2014} for the local case.}
Note that the intrinsic growth rate of the predator is assumed to be negative. Therefore, the predator cannot survive without the feeding prey resource.


\subsection{Forced waves}

Here we focus on the problem with nonlocal dispersal, though the system with local diffusion can be handled similarly. In the homogeneous case, solutions typically converge to a spatially constant stable steady state in the large time. However, such spatially constant steady states (aside from the trivial steady state 0)
no longer exist in \eqref{pp} due to the spatial heterogeneity in the term $\alpha$.
Moreover, even if the prey survives, we cannot expect it to persist in the part of the environment where its intrinsic growth rate, which is precisely the function $r_1\alpha$, is negative.

This leads us to introduce the notion of a \textit{forced wave}, which is a positive solution of \eqref{pp} and is stationary in the moving frame with the speed $s$ of the climate,
i.e. it is of the form $(u,v)(x,t)=(\hat\phi,\hat\psi)(\xi)$, $\xi:=x-st$. The functions $\{\hat\phi,\hat\psi\}$ (the wave profiles) then satisfy
\be\label{TWS0}
\begin{cases}
-s\hat\phi'(\xi)=d_1\mathcal{N}_1[\hat\phi](\xi)+r_1\hat\phi(\xi)[\alpha(\xi)-\hat\phi(\xi)-a\hat\psi(\xi)],\;\xi\in\bR,\\
-s\hat\psi'(\xi)=d_2\mathcal{N}_2[\hat\psi](\xi)+r_2\hat\psi(\xi)[-1+b\hat\phi(\xi)-\hat\psi(\xi)],\;\xi\in\bR,
\end{cases}
\ee
where
\beaa
\mathcal{N}_i[g](\xi):=\int_{\bR}J_i(\xi-y)g(y)dy-g(\xi),\; i=1,2.
\eeaa

We are interested in the following two different types of forced wave solutions. The first one is the front type, namely, a solution $(\hat\phi,\hat\psi)$ of \eqref{TWS0} such that
\be\label{bc0}
{(\hat\phi,\hat\psi)(-\infty)=(0,0),\;
(\hat\phi,\hat\psi)(\infty)=(u_*,v_*):=\left(\frac{1+a}{1+ab},\frac{b-1}{1+ab}\right),}
\ee
where $(u_*,v_*)$ is the unique constant co-existence state of \eqref{pp} with $\alpha\equiv\alpha(\infty)=1$.
This is in some sense corresponds to the best outcome scenario where both species persist ahead of the climate change, which typically arise when the initial conditions do not decay at infinity.

Another forced wave is the mixed front-pulse type, i.e., a solution $(\hat\phi,\hat\psi)$ of \eqref{TWS0} such that
\be\label{bc1}
(\hat\phi,\hat\psi)(-\infty)=(0,0),\; (\hat\phi,\hat\psi)(\infty)=(1,0),
\ee
where $(1,0)$ is the predator-free state with prey in its maximal capacity $1$. In this second type, the predator component of the forced wave is a pulse, and it corresponds to a non-trivial threshold between persistence and extinction of this species. In particular, we will see below that the mixed front-pulse forced waves only exist above some critical speed $s_*$ (see~\eqref{maxspeed_predator} below). This will also turn out to be a crucial parameter for the large-time persistence of the predator in the Cauchy problem; we refer to Subsections~\ref{subsec:spread_nonlocal} and~\ref{subsec:spread_local} below for more details.

We point out that there also exists a third type, which we may refer to as the pulse type forced wave, and which is a positive solution $(\hat \phi , 0)$ of \eqref{TWS0} such that
\begin{equation*}
\hat\phi (-\infty)= \hat \phi (\infty) = 0.
\end{equation*}
As far as the pulse type forced wave is concerned, the system {\eqref{pp} is reduced} to a scalar equation which has been studied for instance in~\cite{coville2020,lwz18}.
It is expected that a pulse type forced wave exists if and only if
$$s \geq s^{*} :=\inf_{\lambda>0}\frac{d_1(\int_\bR{J_1(y)e^{\lambda y}dy}-1)+r_1}{\lambda}, $$
where $s^*$ will also arise in our large-time persistence and spreading results for the prey in Subsections~\ref{subsec:spread_nonlocal} and~\ref{subsec:spread_local}.
{Since this result can be obtained by the same method as for the mixed front-pulse type waves, we omit its detailed proof in this paper.}


For the existence of front type forced waves for system \eqref{pp}, {our main result reads as follows.}
\begin{theorem}\label{th:forced}
Suppose that
\begin{equation}\label{ab2}
b>1, \quad ab<1.
\end{equation}
Then there exists a positive solution $(\hat\phi,\hat\psi)$ of \eqref{TWS0} and \eqref{bc0}.
\end{theorem}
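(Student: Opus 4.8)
The plan is to construct the front type forced wave via a super/sub-solution method together with a monotone iteration (or a degree-theoretic / Schauder fixed point argument) carried out on a sequence of truncated problems on bounded intervals, then pass to the limit. The natural framework is to look for $(\hat\phi,\hat\psi)$ squeezed between explicit ordered pairs of super- and sub-solutions of \eqref{TWS0}. For the upper barrier, since $\alpha \le 1$ and $b>1$, one expects $\hat\phi \le \ophi$ for a constant slightly above $u_*$ (or simply $\hat\phi\le 1$, using that the prey equation with $\alpha\equiv 1$ and $v\ge0$ forces $\hat\phi\le 1$), and $\hat\psi \le \opsi$ for a suitable constant related to $v_*$; these follow from a comparison argument on the scalar equations once one notes $ab<1$ keeps the coexistence state well-defined and the system quasi-monotone after the usual change of variables $\tilde v = M - v$. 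For the lower barriers near $-\infty$ one uses the exponential ansatz: linearizing the prey equation at $(0,0)$ where $\alpha(-\infty)<0$ shows $\hat\phi$ should decay like $e^{\lambda_1 \xi}$ for an appropriate $\lambda_1>0$ solving the characteristic equation $d_1(\int_\bR J_1(y)e^{-\lambda y}dy - 1) + s\lambda + r_1\alpha(-\infty) = 0$, and similarly build a small sub-solution of the form $\uphi = \max\{0, \epsilon e^{\lambda_1\xi} - Ke^{(\lambda_1+\eta)\xi}\}$; the predator sub-solution is built analogously once the prey is known to be bounded below on half-lines, using $b\hat\phi - 1 >0$ far to the right.

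The key steps, in order, are: (i) fix the constant super-solution pair $(\ophi,\opsi)$ and verify the differential inequalities in \eqref{TWS0}, handling the nonlocal terms via $\mathcal N_i[\text{const}]=0$; (ii) construct the sub-solution pair with the exponential tails described above, checking the inequalities in the three regions (the tail region near $-\infty$, the bulk, and near $+\infty$) and checking the ordering $\uphi\le\ophi$, $\upsi\le\opsi$; (iii) on each interval $[-n,n]$ solve a truncated boundary value problem with boundary data matching the barriers, obtaining a solution $(\hat\phi_n,\hat\psi_n)$ trapped between sub- and super-solutions — here one invokes Schauder's fixed point theorem on the cone-valued map $g\mapsto$ (solution of the linear nonlocal problem with $g$ plugged into the reaction), using that after adding a large constant $\mu g$ to both sides the operator $g\mapsto$ RHS is monotone; (iv) derive uniform $L^\infty$ and (via the equations, since $\hat\phi'$ is controlled by the other bounded terms) $C^1_{loc}$ estimates, extract a locally uniformly convergent subsequence as $n\to\infty$, and verify the limit solves \eqref{TWS0} on all of $\bR$ and lies between the barriers, hence is positive; (v) identify the limits at $\pm\infty$: the sandwiching already forces $(\hat\phi,\hat\psi)(-\infty)=(0,0)$, while at $+\infty$ one uses a sliding/monotonicity or a dynamical-systems argument on the limit profile together with $ab<1$ and $b>1$ to rule out any limit point other than $(u_*,v_*)$ — typically by showing the $\omega$-limit set of the (spatially reversed) profile at $+\infty$ is a bounded entire solution of the homogeneous system, which by the global stability of $(u_*,v_*)$ in the ODE/kinetic system (a consequence of $ab<1$) must be the constant $(u_*,v_*)$.

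The main obstacle is the lack of a comparison principle for the full predator–prey system, which is exactly the difficulty flagged in the introduction. This affects two places: building a genuinely ordered pair of sub/super-solutions that is compatible with both equations simultaneously (the coupling term $-a\hat\psi$ in the prey equation and $+b\hat\phi$ in the predator equation push in opposite directions), and proving the convergence at $+\infty$ to the coexistence state rather than to $(1,0)$ or to some oscillating profile. I expect to resolve the first by the standard Lotka–Volterra trick of reducing to a cooperative (quasi-monotone) system via $\tilde v := \kappa - v$ for a large constant $\kappa$, so that monotone iteration applies on the truncated problems; and the second by combining the a priori bounds with a Lyapunov-function argument for the kinetic system — the function $V = b(u - u_* \ln u) + a(v - v_*\ln v)$ is a standard Lyapunov function for the Lotka–Volterra coexistence equilibrium when $ab<1$ — applied to entire solutions arising as limits of translates, forcing the profile to converge to $(u_*,v_*)$. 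A secondary technical point is that the nonlocal operators $\mathcal N_i$ do not regularize, so $C^1$ bounds on the profiles must be extracted directly from the equations (the $\hat\phi'$, $\hat\psi'$ terms are solved in terms of bounded quantities), but assumptions (J1)–(J2) with compact support make this routine.
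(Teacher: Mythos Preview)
Your proposal has the right overall architecture (super/sub-solutions plus a Schauder-type fixed point), but two concrete steps would fail as written.

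First, your lower barrier for the prey, $\uphi = \max\{0,\ \epsilon e^{\lambda_1\xi} - K e^{(\lambda_1+\eta)\xi}\}$, is a pulse: it vanishes both as $\xi\to -\infty$ and as $\xi\to +\infty$ (the second exponential eventually dominates). Hence it gives no positive lower bound for $\hat\phi$ at $+\infty$, so (a) you cannot build the predator sub-solution ``using $b\hat\phi-1>0$ far to the right'' from it, and (b) your Lyapunov argument at $+\infty$ cannot even start, since it needs $\liminf_{\xi\to+\infty}\hat\phi>0$ and $\liminf_{\xi\to+\infty}\hat\psi>0$ as input. The paper avoids this by taking as lower solutions not explicit exponentials but actual \emph{scalar forced waves}: after the reflection $z=-\xi$, the sub-solution $\uphi$ is the front (from \cite{lwz18}) for the prey equation with $\psi$ frozen at its maximum $b-1$, which exists because $1-a(b-1)>0$ under $ab<1$; then $\upsi$ is the front for the predator equation with $\phi$ frozen at $\uphi$, which exists because $b[1-a(b-1)]>1$. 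These fronts tend to the positive constants $1-a(b-1)$ and $(b-1)(1-ab)$ at the coexistence end, which is exactly the a priori positivity needed to run a contracting-rectangles argument (their Lemma~3.5) pinning the limit to $(u_*,v_*)$. Your Lyapunov alternative for that last step would also work, but only once this positive lower bound is in hand.

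Second, the substitution $\tilde v:=\kappa-v$ does \emph{not} make the system cooperative: the $u$-equation becomes increasing in $\tilde v$, but the $\tilde v$-equation is then decreasing in $u$, so the off-diagonal signs remain mixed. Monotone iteration is therefore unavailable. The paper handles this with a mixed-quasimonotone notion of upper/lower solutions (the inequality for $\ophi$ involves $\upsi$, that for $\opsi$ involves $\ophi$, etc.) and applies Schauder directly in a weighted sup-norm on all of $\R$; no truncation to $[-n,n]$ and no monotonicity are needed.
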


For the mixed front-pulse type forced waves, since $b >1$, the quantity
\begin{equation}\label{maxspeed_predator}
s_{*} :=\inf_{\lambda>0}\frac{d_2 (\int_\bR{J_2(y)e^{\lambda y}dy}-1)+r_2 (b-1)}{\lambda}
\end{equation}
is well-defined. In fact, it is the spreading speed of the predator population when the density of preys is fixed to its maximal capacity~$1$; cf. \cite{DGLP19}.
Then we have:
\begin{theorem}\label{th:forced2}
Assume that $b>1$. Suppose that, in addition to {\rm ($\alpha$1)} and {\rm ($\alpha$2)}, $\alpha$ satisfies
\begin{enumerate}
{\item[{\rm ($\alpha$3)}]  $\alpha(\infty)-\alpha(z)\le Ce^{-\rho z}$ for all large $z$ for some positive constants $C$ and $\rho$.}
\end{enumerate}
Then there exists a positive solution $(\hat\phi,\hat\psi)$ of \eqref{TWS0} and \eqref{bc1} if and only if $s\ge s_*$.
\end{theorem}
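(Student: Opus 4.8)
The plan is to prove the two implications separately.

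\textit{Necessity.} Suppose $(\hat\phi,\hat\psi)$ is a positive solution of \eqref{TWS0}--\eqref{bc1}; in particular $\hat\psi>0$ on $\bR$, $\hat\psi(+\infty)=0$ and $\hat\phi(+\infty)=1$. Fix $\epsilon\in(0,(b-1)/2)$ and choose $M$ so large that $\hat\phi\ge1-\epsilon/b$ and $\hat\psi\le\epsilon$ on $[M,\infty)$. Then on $[M,\infty)$ the second equation yields
\[
-s\hat\psi'(\xi)\ \ge\ d_2\mathcal{N}_2[\hat\psi](\xi)+c\,\hat\psi(\xi),\qquad c:=r_2(b-1-2\epsilon)>0 .
\]
Thus $\hat\psi$ is a positive, bounded supersolution of a scalar linear nonlocal equation with positive zeroth-order coefficient $c$ on a half-line, tending to $0$ at $+\infty$. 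By the standard asymptotic/Laplace-transform analysis of such equations (cf. the scalar treatments in \cite{coville2020,lwz18}), $\hat\psi$ decays exponentially and any admissible decay rate $\lambda>0$ must satisfy $s\lambda\ge d_2(\int_\bR J_2(y)e^{\lambda y}dy-1)+c$: concretely, multiplying the displayed inequality by $e^{\mu\xi}$, integrating over $[M,\infty)$, evaluating the nonlocal term by Fubini, and letting $\mu$ tend to the abscissa of convergence of $\int e^{\mu\xi}\hat\psi\,d\xi$, one sees that $d_2(\int_\bR J_2(y)e^{\mu y}dy-1)+c-s\mu$ cannot remain positive for all $\mu\ge0$. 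Hence $s\ge\inf_{\lambda>0}\lambda^{-1}\big[d_2(\int_\bR J_2(y)e^{\lambda y}dy-1)+c\big]$, and letting $\epsilon\downarrow0$ gives $s\ge s_*$.

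\textit{Sufficiency.} Fix $s\ge s_*$. Because \eqref{TWS0} is of predator--prey type and has no comparison principle, I would combine an ordered pair of super/sub-solutions with a Schauder (or Leray--Schauder degree) fixed-point argument on truncated intervals $[-n,n]$, together with a passage to the limit $n\to\infty$ and interior estimates, rather than a monotone iteration. The prey component is sandwiched between $\overline\phi(\xi)=\min\{1,Ke^{\mu\xi}\}$ --- with $\mu>0$ chosen so that $d_1(\int_\bR J_1(y)e^{\mu y}dy-1)+r_1\alpha(-\infty)+s\mu<0$, which is possible since $\alpha(-\infty)<0$ --- and $\underline\phi(\xi)=\max\{0,\,1-A_1e^{-\gamma\xi}-A_2e^{-\lambda_+\xi}\}$, with $\gamma\in(0,\rho]$ supplied by ($\alpha$3) and $\lambda_+$ the predator decay rate below; the two correction terms in $\underline\phi$ absorb $r_1\hat\phi(\alpha(\xi)-1)$ and $-ar_1\hat\phi\hat\psi$. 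One checks that $\overline\phi$ is a supersolution of the first equation whenever $\hat\psi\ge0$ (as $\mathcal{N}_1[1]\equiv0$ and $\alpha\le1$) and that $\underline\phi$ is a subsolution. For the predator the upper barrier is $\overline\psi(\xi)=\min\{b-1,\,B_-e^{\lambda_-\xi},\,B_+e^{-\lambda_+\xi}\}$, with $\lambda_+>0$ a root of $s\lambda=d_2(\int_\bR J_2(y)e^{\lambda y}dy-1)+r_2(b-1)$ --- which exists precisely because $s\ge s_*$ --- and $\lambda_->0$ taken below the unique positive root of $s\lambda=r_2-d_2(\int_\bR J_2(y)e^{\lambda y}dy-1)$, this last exponent governing the decay as $\xi\to-\infty$ in the region where the prey is scarce; at the critical speed $s=s_*$ the term $B_+e^{-\lambda_+\xi}$ is replaced by $B_+(1+\xi)e^{-\lambda_*\xi}$ to handle the double root. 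The key feature is that $\overline\psi$ is small at $\pm\infty$, which pins down $\hat\psi(\pm\infty)=0$, and in particular forces $\hat\psi\to0$ rather than $\hat\psi\to b-1$ at $+\infty$. One then verifies $\underline\phi\le\overline\phi$, $0\le\hat\psi\le\overline\psi$, and the four coupled differential inequalities (using $\hat\phi\le\overline\phi$ in the predator supersolution and $\hat\psi\le\overline\psi$ in the prey subsolution), where $b>1$ and the monotonicity of $\alpha$ enter. Solving the truncated problems in this order interval, with boundary data $0$ at $-n$ and $(1,0)$ at $n$ --- the predator datum at $n$ left free and tuned so that a fixed normalization $\hat\psi(\xi_0)=\kappa>0$ holds --- yields uniform estimates; passing to a locally uniform limit produces a solution of \eqref{TWS0} on $\bR$ satisfying \eqref{bc1}, with the normalization designed to keep $\hat\psi\not\equiv0$, i.e. a genuine pulse.

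\textit{Main obstacle.} The delicate point is the sufficiency direction, specifically keeping the predator nontrivial: near $+\infty$ the linearized scalar operator for $\hat\psi$ has non-positive generalized principal eigenvalue exactly when $s\ge s_*$, so no ordinary subsolution with an exponentially decaying right tail is available, and the non-degeneracy of $\hat\psi$ must be extracted from the truncated problems --- via the normalization together with a uniform lower bound surviving the limit, or alternatively by first treating $s>s_*$ and letting $s\downarrow s_*$. The critical speed $s=s_*$, where the predator tail acquires a polynomial correction, is an additional technicality. The necessity direction, by contrast, is routine once the exponential decay of $\hat\psi$ at $+\infty$ has been established.
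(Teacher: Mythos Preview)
Your necessity argument is in the right spirit but less direct than the paper's: after the change of variables $z=-\xi$, the paper simply sets $\zeta(z)=\psi'(z)/\psi(z)$, invokes \cite[Proposition~3.7]{zlw12} to obtain that $\zeta(-\infty)$ exists and satisfies $\Delta(\zeta,s)=0$, and notes this is impossible for $s<s_*$. Your Laplace--transform sketch could be made rigorous but you have not actually established the exponential decay you need before integrating.

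The real issue is your sufficiency argument, where you have misdiagnosed the main obstacle. You write that for $s\ge s_*$ ``no ordinary subsolution with an exponentially decaying right tail is available'' and therefore resort to truncated problems with a normalization to keep $\hat\psi$ nontrivial. This is backwards. It is precisely when $s\ge s_*$ that the characteristic equation $\Delta(\lambda,s)=0$ has real roots, and the paper exploits this to build an explicit nontrivial lower barrier for the predator on the whole line: for $s>s_*$,
\[
\underline\psi(z)=\max\{0,\,e^{\lambda_1 z}-k\,e^{(\lambda_1+\mu)z}\},\qquad z=-\xi,
\]
with $\lambda_1$ the smaller root and $\mu\in(0,\lambda_2-\lambda_1)$ so that $\Delta(\lambda_1+\mu,s)<0$; for $s=s_*$, the double root $\lambda_*$ forces $\underline\psi(z)=\max\{0,[-Lz-q\sqrt{-z}]e^{\lambda_* z}\}$. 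These are the standard KPP-type two-exponential subsolutions, and they guarantee $\hat\psi>0$ after a direct Schauder fixed-point argument on all of~$\bR$ (the paper's Lemma~\ref{luslem}), with no truncation, no normalization, and no compactness extraction needed.

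Several other pieces of your construction are also more elaborate than necessary. The paper takes $\overline\phi\equiv 1$ (not a two-piece function), a single-exponential $\underline\phi=\max\{0,1-\eta e^{\mu z}\}$ (your second correction term is absorbed because $\mu$ is chosen below both $\rho$ and $\lambda_1$), and a two-piece $\overline\psi=\min\{b-1,e^{\lambda_1 z}\}$ that does \emph{not} pin down the behaviour as $\xi\to-\infty$; instead the limit $(\hat\phi,\hat\psi)(-\infty)=(0,0)$ is obtained a posteriori by a short maximum-sequence argument using $\alpha(-\infty)<0$. Your three-piece $\overline\psi$ would have to be checked at an additional corner and relies on an exponent $\lambda_-$ whose role you have not made precise.
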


For the local diffusion case,  the wave profiles $ \{\hat\phi,\hat\psi\}$ satisfy
\be\label{TWS1}
\begin{cases}
-s\hat\phi'(\xi)=d_1\hat\phi''(\xi)+r_1\hat\phi(\xi)[\alpha(\xi)-\hat\phi(\xi)-a\hat\psi(\xi)],\;\xi\in\bR,\\
-s\hat\psi'(\xi)=d_2\hat\psi''(\xi)+r_2\hat\psi(\xi)[-1+b\hat\phi(\xi)-\hat\psi(\xi)],\;\xi\in\bR.
\end{cases}
\ee
Then the same results as Theorems~\ref{th:forced} and \ref{th:forced2} hold for system \eqref{TWS1}, with $s_*:=2\sqrt{d_2r_2(b-1)}$ instead of~\eqref{maxspeed_predator}.


\subsection{Spreading behaviours: nonlocal case}\label{subsec:spread_nonlocal}

We now turn to the question of the large time behaviour of solutions of the Cauchy problems \eqref{pp} and \eqref{spp},
and more precisely to the question of the persistence or extinction of either species.
To study the spatial-temporal dynamics of both species, we consider the solution $(u,v)$ of \eqref{pp} supplemented with the initial condition
\be\label{pp-ic}
u(x,0)=u_0(x),\; v(x,0)=v_0(x),\;x\in\bR,
\ee
where $u_0$ and $v_0$ are nonnegative bounded continuous functions.

When the environment is homogeneous and there is no predator, that is say $\alpha \equiv 1$ and $v \equiv 0$ in the $u$-equation of \eqref{pp},
then the spreading speed of the population prey is given by the quantity
\begin{equation}\label{maxspeed_prey}
s^{*} :=\inf_{\lambda>0}\frac{d_1(\int_\bR{J_1(y)e^{\lambda y}dy}-1)+r_1}{\lambda}.
\end{equation}
More precisely, when the initial condition is nontrivial and compactly supported, the solution of
$$u_t(x,t)=d_{1}\bNu(x,t)+r_{1}u(x,t)[1-u(x,t)], \; x\in\bR,\, t>0,$$
converges to $1$ locally uniformly in any moving frame with speed less than $s^*$, and to $0$ in moving frames with speed larger than $s^*$, and furthermore the value $s^*$ is also the minimal speed for traveling wave solutions (cf. \cite{jz09,w82}). It is then natural to expect here that whether the prey manages to keep pace with the climate change will be determined by whether the climate change speed $s$ is faster or slower than this $s^*$.

{Similarly, we expect that the persistence of the predator will be (in part) determined by the comparison of the values~$s$ and~$s_*$. We recall that $s_*$ was defined from \eqref{maxspeed_predator} by
$$
s_{*} :=\inf_{\lambda>0}\frac{d_2 (\int_\bR{J_2(y)e^{\lambda y}dy}-1)+r_2 (b-1)}{\lambda},
$$
which is the spreading speed of the predator when the density of prey is fixed to its maximal capacity~1, and which has been established in Theorem~\ref{th:forced2} to also be the minimal speed for mixed front-pulse forced waves.}

Consistently with the above discussion, our results show that in order to survive both species must keep pace with the speed of climate change.
Moreover, both predator and prey must move their habitats {accordingly to the shift of the ``favorable'' environment, which is ahead of the moving frame with speed $s$.}

More precisely, {we first consider the case when the prey is faster than the predator,
in the sense that its maximal speed $s^*$ (i.e. in the ``favorable'' environment and without prey) is larger than the maximal speed $s_*$ of the predator (i.e. when the prey density is at saturation).}

\begin{theorem}\label{spread_fastprey}
Assume that $b>1$ and $s^*  > s_*$. Let $(u,v)$ be the solution of {\eqref{pp} and \eqref{pp-ic}}, where $0 \leq u_0 \leq 1$, $0 \leq v_0 \leq b-1$ are both nontrivial, continuous and compactly supported.
\begin{itemize}
\item If $s > s^*$, then
 $$\lim_{ t \to +\infty} \sup_{x \in \R} [u(x,t) + v(x,t)] = 0.$$
\item If $s \in (s_* ,s^*)$, then for any $\varepsilon >0$,
\beaa
&&\lim_{t \to +\infty} \sup_{x \in \R} v (x,t) = 0,\\
&&\lim_{t \to +\infty} \{  \sup_{ x \leq (s-\varepsilon) t} u(x,t) + \sup_{x \geq (s^*+ \varepsilon) t} u (x,t) \} =0,\\
&&\lim_{t \to +\infty}  \sup_{ (s + \varepsilon) t \leq x \leq (s^* -\varepsilon) t} | u(x,t) -1|  =0.
\eeaa
\item If $s < s_*$, then for any $\varepsilon >0$,
\beaa
&&\lim_{t \to +\infty} \{ \sup_{x \leq ( s - \varepsilon) t } v (x,t) + \sup_{x \geq (s_* + \varepsilon) t} v(x,t) \} = 0,\\
&&\lim_{t \to +\infty} \{ \sup_{x \leq ( s - \varepsilon) t } u (x,t) + \sup_{x \geq (s^* + \varepsilon) t} u (x,t) \} = 0,\\
&&\lim_{t \to +\infty}  \sup_{ (s_* + \varepsilon) t \leq x \leq (s^* -\varepsilon) t} | u(x,t) -1|  =0.
\eeaa
\end{itemize}
\end{theorem}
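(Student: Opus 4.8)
\textit{Proof proposal.} The plan is to replace the missing comparison principle for the system by scalar comparisons, freezing one component at a time. First I would record the elementary bounds: comparison with the constants $1$ and $b-1$ (using $\alpha\le1$ and $b>1$) gives $0\le u\le1$, $0\le v\le b-1$ for $t>0$, while positivity of the nonlocal evolution gives $u,v>0$ on all of $\bR$ for $t>0$. Since $\alpha\le1$ and $-av\le0$, the prey obeys $u\le\bar u$ with $\bar u_t=d_1\mathcal{N}_1[\bar u]+r_1\bar u(1-\bar u)$, $\bar u(\cdot,0)=u_0$; since $u\le1$, the predator obeys $v\le\bar v$ with $\bar v_t=d_2\mathcal{N}_2[\bar v]+r_2\bar v(b-1-\bar v)$, $\bar v(\cdot,0)=v_0$. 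By the classical spreading theory for monostable nonlocal equations with compactly supported nontrivial data (\cite{jz09,w82}), and because $s^*$ and $s_*$ are precisely the KPP speeds of these two equations, $\bar u(x,t)\to0$ uniformly on $\{x\ge(s^*+\varepsilon)t\}$ and $\bar v(x,t)\to0$ uniformly on $\{x\ge(s_*+\varepsilon)t\}$. This gives the extinction of $u$ on $\{x\ge(s^*+\varepsilon)t\}$ in all three cases, and of $v$ on $\{x\ge(s_*+\varepsilon)t\}$ when $s<s_*$.

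Next I would use the unfavorable region behind the shift. With $R$ the support radius of $J_1$, on $A_t:=\{x\le(s-\varepsilon)t-R\}$ one has $\alpha(x-st)\le\alpha(-\varepsilon t-R)=:-\beta(t)$ with $\liminf_{t\to\infty}\beta(t)=-\alpha(-\infty)>0$ by ($\alpha$2), so $u_t\le d_1\mathcal{N}_1[u]-r_1\beta(t)u$ on $A_t$. The function $\overline w(x,t)=\min\{1,Ke^{\lambda(x-(s-\varepsilon)t+R)}\}$ is then a super-solution dominating $u$ on $\bigcup_{t\ge t_0}A_t$ when $\lambda>0$ is small enough that $-(s-\varepsilon)\lambda+r_1\beta(t)\ge d_1(\int_{\bR}J_1(y)e^{\lambda y}dy-1)$ for $t$ large (the left side tends to $r_1(-\alpha(-\infty))>0$, the right side to $0$) and $K$ is large enough that $\overline w\ge u$ at $t_0$ and on the kernel reach $\{(s-\varepsilon)t-R\le x\le(s-\varepsilon)t\}$, where $\overline w\equiv1$; hence $u\to0$ uniformly on $\{x\le(s-2\varepsilon)t\}$. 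The same construction with $s-\varepsilon$ replaced by $s^*+\varepsilon$ (choosing $\varepsilon$ with $s^*+\varepsilon<s$) and $u\le\bar u\to0$ as lateral data shows: if $s>s^*$ then $u\to0$ uniformly on $\{x\le(s^*+\varepsilon)t\}$, whence with $u\le\bar u\to0$ on the complement, $\sup_{\R}u\to0$; then $v_t\le d_2\mathcal{N}_2[v]-\tfrac{r_2}{2}v$ for $t$ large and comparison with the spatially homogeneous solution gives $\sup_{\R}v\to0$. If $s\in(s_*,s^*)$, the sets $\{x\le(s-\varepsilon)t\}$ (where $u\to0$, so $v$'s net growth rate is eventually $\le-r_2/2$ and $v\to0$) and $\{x\ge(s_*+\varepsilon)t\}$ (where $v\le\bar v\to0$) cover $\R$ once $\varepsilon<(s-s_*)/2$, giving $\sup_{\R}v\to0$. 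If $s<s_*$, the same decay of $v$ on $\{x\le(s-\varepsilon)t\}$ supplies the remaining statement for $v$.

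Finally I would build the lower barrier giving $u\to1$ on the window, $\{(s+\varepsilon)t\le x\le(s^*-\varepsilon)t\}$ when $s\in(s_*,s^*)$ and $\{(s_*+\varepsilon)t\le x\le(s^*-\varepsilon)t\}$ when $s<s_*$. In each case the window lies inside a moving region on which $v\to0$ (shown above) and $\alpha(x-st)\to1$, so for small $\delta>0$ and $t$ large $u$ satisfies there $u_t\ge d_1\mathcal{N}_1[u]+r_1u(1-\delta-u)$. The key point is that the left edge speed of this region — $s$, respectively $s_*$ — is strictly less than $s^*$, while $s^*$ is, up to $o(1)$ as $\delta\downarrow0$, the KPP speed of $w_t=d_1\mathcal{N}_1[w]+r_1w(1-\delta-w)$; so for $\delta$ small one can fix $c$ with $\max\{s,\,s^*-\varepsilon\}<c<s^*$ (resp. $\max\{s_*,\,s^*-\varepsilon\}<c<s^*$) and use a sub-solution of the $(1-\delta)$-equation living on the widening moving interval between the region's left edge and $ct$, built from the principal eigenfunction of the nonlocal operator there (its principal eigenvalue becoming positive as the interval widens). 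Fitting this below $u$ at a large time $t_0$ using only the positivity of $u$, the sub-solution eventually exceeds $1-2\delta$ throughout the window; squeezing against $u\le\bar u\le1$ and letting $\delta\downarrow0$ yields $u\to1$ uniformly on the window. I expect this last step to be the main obstacle: one must keep the moving interval inside the region where the $(1-\delta)$-inequality holds while ensuring its width grows and its right end overtakes $(s^*-\varepsilon)t$ — which is exactly where the strict inequality $s<s^*$ (resp. $s_*<s^*$, guaranteed by the hypothesis $s^*>s_*$) is used — and the case $s<s_*$ is a little more delicate since near the left end of the window the predator has depressed the prey, so only the qualitative positivity of $u$ is available to seed the barrier.
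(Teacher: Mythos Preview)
Your strategy is correct and all the pieces can be made to work, but the execution differs from the paper's in one substantial way that is worth knowing about.

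The paper never builds barriers from scratch. Instead it reduces every step to the scalar nonlocal shifting-habitat equation
\[
z_t=d\mathcal{N}[z]+r z\bigl(h(x-ct)-z\bigr)
\]
and invokes the corresponding results of Li--Wang--Zhao~\cite{lwz18}. For the left-cone extinction of $u$ it compares with $z$ solving \eqref{scalar} and cites \cite[Theorem~3.3(i)]{lwz18}, whose proof uses that $u$ lies below a spatial shift of the scalar \emph{forced wave} $\psi_\delta$ (so the issue you implicitly face---that $u(\cdot,t_0)$ is positive everywhere once $t_0>0$---never arises). For $v$ on the left cone it does not argue via ``$u\to0$ there, hence negative growth for $v$''; rather it bounds $u\le\psi_\delta(x-st+x_0)$ \emph{globally} and then compares $v$ with a scalar shifting equation whose heterogeneity is $b\psi_\delta-1$, again citing~\cite{lwz18}. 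For $u\to1$ on the window when $s<s_*$, the paper avoids your moving-eigenfunction construction entirely: from the proof of Theorem~\ref{sp4}(ii) it has $v(x,t)\le\min\{b-1,\,Be^{-\lambda_2[x-(s_*+\tau/2)t]}\}$, and since $s\le s_*$ and $\alpha$ is nondecreasing this gives
\[
\alpha(x-st)-av(x,t)\ \ge\ \widehat\alpha\bigl(x-(s_*+\tau/2)t\bigr)
\]
for a single function $\widehat\alpha$ satisfying ($\alpha$1)--($\alpha$2). Thus $u$ is a supersolution of a scalar shifting problem with shift speed $s_*+\tau/2<s^*$, and \cite[Theorem~3.3(iii)]{lwz18} immediately yields $\liminf u\ge1$ on the window.

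What this buys: your approach is more self-contained, but the paper's reduction to a scalar problem with a \emph{single moving heterogeneity} $\widehat\alpha$ neatly packages the two competing effects (climate shift at speed $s$, predator front at speed $\approx s_*$) into one, and lets the eigenfunction/sub-solution machinery be hidden inside the cited scalar theorem. Your direct route works, but the exposition would need tightening---in particular, the left-cone barriers must be initialized either at $t_0=0$ with $K$ large enough that the exponential region is already where $\alpha$ is uniformly negative, or at $t_0>0$ after noting that $u(\cdot,t_0)$ decays super-exponentially at $-\infty$ (since $J_1$ and $u_0$ are compactly supported); as written, the phrase ``for $t$ large'' combined with ``$K$ large enough that $\overline w\ge u$ at $t_0$'' leaves this ambiguous.
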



{Next, we consider the situation when $s^* \leq s_*$, i.e., the predator is faster than the prey. Here the situation is slightly different because the prey may never outrace the predator.
Conversely, the predator cannot go beyond the habitat of the prey, and in particular we expect that $s^*$ is the critical climate change speed for both species.}

\begin{theorem}\label{spread_fastpred}
Assume that $b>1$ and $s^*  \leq s_*$. Let $(u,v)$ be the solution of \eqref{pp} and \eqref{pp-ic}, where $0 \leq u_0 \leq 1$, $0 \leq v_0 \leq b-1$ are both nontrivial, continuous and compactly supported.
\begin{itemize}
\item If $s > s^*$, then
$$\lim_{ t \to +\infty} \sup_{x \in \R} [u(x,t) + v(x,t)] = 0.$$
\item If $s < s^*$, then for any $\varepsilon >0$,
\beaa
&&\lim_{t \to +\infty} \{ \sup_{x \leq ( s - \varepsilon) t } u (x,t) + \sup_{x \geq (s^* + \varepsilon) t} u (x,t) \} = 0,\\
&&\lim_{t \to +\infty} \{ \sup_{x \leq ( s - \varepsilon) t }v (x,t) + \sup_{x \geq (s^* + \varepsilon) t} v (x,t) \} = 0,
\eeaa
\end{itemize}
\end{theorem}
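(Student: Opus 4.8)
The plan is to prove the theorem entirely by decoupling the system into scalar equations and by constructing explicit super-solutions carried by moving half-lines; the forced waves of Theorems~\ref{th:forced}--\ref{th:forced2} will not be needed, since only extinction (upper) bounds are claimed. Throughout, $\varepsilon>0$ denotes an arbitrary constant that may be shrunk finitely often, and I use repeatedly that each scalar nonlocal equation involved admits a comparison principle once the other component is controlled in $L^\infty$. The first step is to record two \emph{a priori} envelopes. Since $v\ge 0$ and $\alpha\le 1$, the prey $u$ is a subsolution of the scalar nonlocal Fisher--KPP equation with growth $r_1\tilde u(1-\tilde u)$, hence $u\le\tilde u$; as that equation has spreading speed $s^*$ (see \cite{jz09,w82}), $\sup_{x\ge(s^*+\varepsilon)t}u(x,t)\to0$. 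Likewise, since $u\le1$, the predator $v$ is a subsolution of the scalar nonlocal KPP equation with growth $r_2\hat v(b-1-\hat v)$, so $v\le\hat v\le b-1$ for all $t$ and $\sup_{x\ge(s_*+\varepsilon)t}v(x,t)\to0$. Moreover, starting from compactly supported data, $\tilde u(\cdot,t)$ and $\hat v(\cdot,t)$ have super-exponentially decaying (Gaussian-type) tails at each fixed time, a fact I will use to initialize super-solutions.

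Next I treat the region \emph{behind the climate}, $\{x\le(s-\varepsilon)t\}$. There $x-st\le-\varepsilon t\to-\infty$, so by ($\alpha$2) one finds $T$ large and $\nu>0$ with $\alpha(x-st)\le-\nu$ on this set for $t\ge T$, whence $u_t\le d_1\mathcal{N}_1[u]-r_1\nu u$ there. I would dominate $u$ on $\mathbb{R}\times[T,\infty)$ by $\overline u(x,t)=\min\{1,\,Me^{\lambda(x-(s-\varepsilon)t-x_0)}\}$: the flat branch $\overline u\equiv1$ is a super-solution of the $u$-equation wherever $\alpha\le1$, while on the exponential branch (which lies inside $\{x\le(s-\varepsilon)t\}$ once $M$ is large, so that $\alpha\le-\nu$ holds) the super-solution inequality reduces to $-\lambda(s-\varepsilon)-d_1(\int_{\bR}J_1(z)e^{\lambda z}\,dz-1)+r_1\nu\ge0$, valid for $\lambda>0$ small since the left-hand side tends to $r_1\nu>0$ as $\lambda\to0^+$; the super-exponential tails let one pick $M$ so that $u(\cdot,T)\le\overline u(\cdot,T)$. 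Since $\overline u\to0$ uniformly on $\{x\le(s-\varepsilon')t\}$ for every $\varepsilon'>\varepsilon$ and $\varepsilon$ is arbitrary, $\sup_{x\le(s-\varepsilon)t}u(x,t)\to0$. Plugging this back into the $v$-equation — once $u$ is small on $\{x\le(s-\varepsilon_1)t\}$ for some $\varepsilon_1<\varepsilon$, we get $-1+bu-v<-\mu<0$ there — the same construction with $\min\{b-1,\,M'e^{\lambda'(x-(s-\varepsilon/2)t-x_0')}\}$ (flat branch $b-1$, a steady super-solution using $u\le1$; exponential branch carried inside $\{u\text{ small}\}$ for $t$ large) gives $\sup_{x\le(s-\varepsilon)t}v(x,t)\to0$.

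It remains, when $s<s^*$, to sharpen the predator envelope from $s_*$ down to $s^*$ on the region \emph{ahead}, $\{x\ge(s^*+\varepsilon)t\}$ — this is precisely where $s^*\le s_*$ makes the argument necessary. By the first step $u$ is small on $\{x\ge(s^*+\varepsilon')t\}$ for $\varepsilon'<\varepsilon$, so $-1+bu-v<-\mu<0$ there, and I would dominate $v$ on $\mathbb{R}\times[T',\infty)$ by $\min\{b-1,\,M''e^{-\lambda''(x-(s^*+\varepsilon)t-x_0'')}\}$, whose flat branch $b-1$ is again a steady super-solution and whose exponential branch, for $t$ large, sits inside $\{x\ge(s^*+\varepsilon')t\}$; the super-solution inequality once more reduces to a condition valid for $\lambda''$ small, and the tails provide the initialization. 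This yields $\sup_{x\ge(s^*+\varepsilon)t}v(x,t)\to0$ and, with the second step, closes the case $s<s^*$. Finally, for $s>s^*$, pick $\varepsilon$ with $s^*+\varepsilon<s$: then $\{x\le(s^*+\varepsilon)t\}$ is eventually behind the climate (there $x-st\le(s^*+\varepsilon-s)t\to-\infty$), so the second step applies with $s^*+\varepsilon$ in place of $s-\varepsilon$ and gives $\sup_{x\le(s^*+\varepsilon)t}u\to0$, which together with the first step yields $\sup_{x\in\bR}u(x,t)\to0$ (equivalently one may invoke the uniform extinction result for the scalar nonlocal shifting-habitat equation, \cite{lwz18,coville2020}). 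Once $\|u(\cdot,t)\|_{L^\infty}<1/b-\delta$ for $t\ge T''$, the $v$-equation has uniformly negative net growth, $v_t\le d_2\mathcal{N}_2[v]-r_2b\delta v$, and comparison with the spatially constant super-solution $\|v(\cdot,T'')\|_{L^\infty}e^{-r_2b\delta(t-T'')}$ gives $\sup_{x\in\bR}v(x,t)\to0$.

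I expect the main obstacle to be the bookkeeping in the moving-interface super-solutions of the second and third steps: one must work with the nonlocal operators $\mathcal{N}_i$ (which preclude pointwise maximum-principle reasoning but still allow scalar comparison principles under an $L^\infty$ bound on the other unknown), choose each exponential decay rate small enough for the relevant drift--growth inequality, and — most delicately — guarantee that the exponential branch of every super-solution is contained in the region where the governing coefficient ($\alpha$, respectively $-1+bu$) has \emph{already} been shown to be negative. It is this last requirement that forces the use of two nested small parameters $\varepsilon'<\varepsilon$, suitably chosen shifts $x_0,x_0',x_0''$ and intermediate speeds, and a sufficiently late starting time $T$, $T'$ or $T''$ for each comparison.
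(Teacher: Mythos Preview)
Your strategy is sound and gives a correct proof. It is in fact very close to the paper's own argument, which also decouples into scalar comparisons: Theorems~\ref{sp1} and~\ref{sp2} cover the first item (uniform extinction of~$u$ via the scalar shifting equation of~\cite{lwz18}, then the uniformly negative growth rate for~$v$ once $\|u\|_\infty<1/b$ --- exactly your final paragraph), and Theorem~\ref{sp4} covers the second item (pure exponential super-solutions ahead of~$s^*$, including the $\min\{b-1,\cdot\}$ barrier you build in your third step for~$v$).

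The one genuine difference is how you treat the region \emph{behind} the climate. The paper invokes the scalar forced wave $\psi_\delta$ of~\cite{lwz18}: since $u_0$ is compactly supported with $u_0\le1<1+\delta=\psi_\delta(+\infty)$, a single spatial shift places $u_0$ below $\psi_\delta(\cdot+x_0)$, and then $u(x,t)\le\psi_\delta(x-st+x_0)\to0$ on $\{x\le(s-\varepsilon)t\}$ with no further work; the same is done for~$v$ after bounding $u$ by a shifted forced wave. Your $\min\{1,\,Me^{\lambda(x-(s-\varepsilon)t-x_0)}\}$ barrier avoids any appeal to forced waves, which is a genuine gain in self-containedness, but the initialization is more delicate than you suggest: at a late time~$T$ the solution $u(\cdot,T)$ can be of order~$1$ throughout most of $\{x\le(s-\varepsilon)T\}$, so the ``super-exponential tails'' only help after the interface has been pushed very far to the left. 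The clean fix is to start the comparison at $t=0$ and place the interface to the left of both $\operatorname{supp}u_0$ and the zero of~$\alpha$; then the exponential branch stays inside $\{\alpha(\cdot-st)\le-\nu\}$ for all $t\ge0$ and the ordering at $t=0$ is trivial. With that adjustment your construction goes through; the paper's route is simply shorter because the forced wave already encodes the transition layer.
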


Theorems~\ref{spread_fastprey} and~\ref{spread_fastpred} leave open what happens between the moving frames with speeds~$s$ and $\min \{ s_*, s^* \}$. We expect that both species should always persist there, and we will prove some partial results in that direction in Theorems~\ref{upos} and~\ref{vpos} below. A more complete picture will be provided in the case of local diffusion.

For the next results, we need to define the following two quantities
\begin{align*}
s^{**}&:=\inf_{\lambda>0}\frac{d_1(\int_\bR{J_1(y)e^{\lambda y}dy}-1)+r_1[1-a(b-1)]}{\lambda}, \\
s_{**}&:=\inf_{\lambda>0}\frac{d_2(\int_\bR{J_2(y)e^{\lambda y}dy}-1)+r_2(b-1)(1-ab)}{\lambda}.
\end{align*}
{The former can be understood as the speed of the prey in the favorable environment when there is maximal amount of predator, and the latter as the speed of the predator when there is a minimal amount of prey. Notice that these} are well-defined when $b>1$ and $ab<1$.

\begin{theorem}\label{upos}
Assume that $b>1$ and $ab<1$. 
If $s<s^{**}$, then for any $\ep\in(0,(s^{**}-s)/2)$, there is a positive constant $\kappa$ such that
\beaa
\liminf_{t\to\infty} \, \{\inf_{(s+\ep)t\le x\le (s^{**}-\ep)t}u(x,t)\}\ge\kappa
\eeaa
for any solution of $(u,v)$ of \eqref{pp} and~\eqref{pp-ic} where $0 \leq \not \equiv u_0 \leq 1$, $0 \leq v_0 \leq b-1$ are continuous.
\end{theorem}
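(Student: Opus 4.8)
The plan is to construct a compactly supported, moving subsolution for the prey equation that persists below the speed $s^{**}$, using the fact that even in the presence of the maximal admissible predator density $v \le b-1$, the prey's effective growth rate in the favorable zone is $r_1[1-a(b-1)]>0$ (this positivity is exactly where $ab<1$ enters, via ($\alpha$2) with $\alpha(\infty)=1$). First I would record an a priori upper bound: by the comparison principle applied to the $v$-equation (with $u\le 1$), one has $\limsup_{t\to\infty} v(x,t) \le b-1$ uniformly, so that for any $\delta>0$ there is $T_\delta$ with $v(x,t)\le b-1+\delta$ for all $t\ge T_\delta$; hence in the $u$-equation the reaction term satisfies
\[
r_1 u[\alpha(x-st)-u-av] \ge r_1 u[\alpha(x-st) - a(b-1+\delta) - u]
\]
for $t\ge T_\delta$. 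Next, since $\alpha(\infty)=1$, pick $\delta$ small and $M$ large so that $\alpha(z)\ge 1-a(b-1)-\delta' $ for $z\ge M$, with $\delta'$ chosen so that $\mu:=r_1[1-a(b-1)-\delta'-a\delta]>0$ is still close to $r_1[1-a(b-1)]$; this yields, on the half-line $\{x - st \ge M\}$ and for $t\ge T_\delta$, the clean differential inequality
\[
u_t \ge d_1\mathcal{N}_1[u] + u(\mu - r_1 u).
\]

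The heart of the argument is then a standard ``moving bump'' construction for this last scalar nonlocal KPP-type inequality on the region $x \ge st + M$. Because $s < s^{**}$ and $s^{**}$ is the nonlocal KPP spreading speed associated with growth rate $\mu$ as $\delta',\delta\to 0$ (a continuity-in-parameters argument on the variational formula for $s^{**}$ lets us keep the relevant spreading speed strictly above $s+\varepsilon$), one can find, for a speed $c \in (s+\varepsilon, s^{**}-\varepsilon)$, a compactly supported stationary-in-the-$(x-ct)$-frame subsolution $\underline{u}(x-ct)$ of the nonlocal equation with small positive amplitude — this is the classical construction via the principal eigenvalue of the truncated linearized nonlocal operator on a large bounded interval, exactly as in \cite{lwz18,coville2020,DGLP19}. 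One must check that this bump, while being transported at speed $c>s$, eventually lies entirely in the region $x\ge st+M$ where the inequality $u_t \ge d_1\mathcal{N}_1[u]+u(\mu-r_1 u)$ is valid: since $c>s$, after a finite time $T'$ the support of $\underline u(\cdot - ct)$ is to the right of $st+M$, and it stays there. Finally, since $u_0 \not\equiv 0$ and $u$ obeys a nonlocal parabolic equation, $u(\cdot,t)$ becomes (and stays) bounded below by a positive constant on any fixed compact set for $t$ large; translating this lower bound into the moving frame, at some finite time $T''\ge \max\{T_\delta,T'\}$ we have $u(\cdot,T'')\ge \underline{u}(\cdot - cT'')$ on all of $\R$ (the bump has small amplitude). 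The comparison principle for \eqref{pp} restricted to the invariant region $x\ge st+M$ — with the already-established lower order bound controlling the boundary and the $v$-contribution — then gives $u(x,t)\ge \underline u(x-ct)$ for all $t\ge T''$, and choosing $\kappa = \tfrac12\min \underline u >0$ over the (compact) bulk of the bump, together with the freedom to pick any $c\in(s+\varepsilon, s^{**}-\varepsilon)$, yields the uniform lower bound on $(s+\varepsilon)t \le x\le (s^{**}-\varepsilon)t$ claimed in the statement.

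The main obstacle I expect is the bookkeeping around the two distinct ``error'' parameters: the construction of the nonlocal subsolution bump needs its intrinsic speed (governed by $\mu$, which is slightly smaller than $r_1[1-a(b-1)]$) to still exceed $s+\varepsilon$, so one has to argue that the variational quantity defining $s^{**}$ depends continuously on the growth rate and then fix $\delta,\delta',M$ at the very end, after $\varepsilon$ is given — a slightly delicate order-of-quantifiers point. A secondary technical nuisance is justifying the comparison principle on the \emph{moving} half-line $x\ge st+M$ rather than all of $\R$: one handles this by noting the subsolution vanishes near the moving boundary $x=st+M$ (its support stays strictly to the right), so no boundary condition is actually needed there, and the nonlocal term only helps. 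Everything else — the a priori bound on $v$, the instantaneous positivity of $u$, and the eigenvalue construction of the bump — is by now routine in this literature and can be cited.
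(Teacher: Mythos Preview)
Your approach is correct but considerably more elaborate than the paper's, and in one place creates unnecessary work for yourself. The paper's proof is two lines: since the initial data satisfy $v_0\le b-1$, a direct comparison gives $v(x,t)\le b-1$ for \emph{all} $t\ge 0$ (no $\delta$-perturbation needed), so $u$ is a supersolution of the scalar shifting-environment problem
\[
\underline{u}_t = d_1\mathcal{N}_1[\underline{u}] + r_1\underline{u}\,[\alpha(x-st)-a(b-1)-\underline{u}],\qquad \underline{u}(\cdot,0)=u_0,
\]
and one simply invokes \cite[Theorem~3.3(iii)]{lwz18} for this scalar equation, which gives $\underline{u}\to 1-a(b-1)$ uniformly on $(s+\ep)t\le x\le (s^{**}-\ep)t$. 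That is the whole argument, and it yields the explicit constant $\kappa=1-a(b-1)$.

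By contrast, you pass to a \emph{homogeneous} inequality on the moving half-line $x\ge st+M$ and then rebuild the bump-subsolution machinery by hand. This is essentially reproving \cite[Theorem~3.3(iii)]{lwz18} rather than citing it, and it forces you to deal with two technical nuisances the paper avoids entirely: the continuity of the spreading speed in the growth parameter (your $\delta,\delta'$ bookkeeping), and the comparison on a moving domain. Your handling of both points is fine, but note that your initial-ordering step (``$u(\cdot,T'')\ge\underline{u}(\cdot-cT'')$'') quietly requires that $u$ has already spread out to the location $cT''$ by time $T''$; the clean way to justify this is, again, comparison with the scalar equation of \cite{lwz18}, at which point you might as well have used that comparison from the start. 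In short: keep the heterogeneity $\alpha(x-st)$ in the lower barrier and cite the scalar result directly.
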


\begin{theorem}\label{vpos}
Assume that $b>1$ and $ab<1$. Set $\underline{s}^*:=\min\{s^{**},s_{**}\}$ and suppose that $s<\underline{s}^*$.
 Then for any solution $(u,v)$ of \eqref{pp} and~\eqref{pp-ic} where $0 \leq  u_0 \leq 1$, $0 \leq v_0 \leq b-1$ are both nontrivial and continuous, we have
\beaa
\liminf_{t\to\infty} \, \{\inf_{(s+ \ep)t\le x\le (\underline{s}^*-\ep)t} v(x,t)\}>0,
\eeaa
for any $\ep\in(0,(\underline{s}^*-s)/2)$.
\end{theorem}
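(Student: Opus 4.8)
The plan is to establish a lower bound on the predator density $v$ in the moving box $[(s+\ep)t, (\underline{s}^*-\ep)t]$ by first using Theorem~\ref{upos} to control the prey from below, and then building a suitable subsolution for the $v$-equation. First I would apply Theorem~\ref{upos}: since $s < \underline{s}^* \le s^{**}$, for any $\ep' \in (0,(s^{**}-s)/2)$ there is $\kappa>0$ with $\liminf_{t\to\infty}\inf_{(s+\ep')t\le x\le(s^{**}-\ep')t} u(x,t)\ge\kappa$. I would also need an \emph{upper} bound $u(x,t)\le u_* + o(1)$ — or at least $u\le 1$, which already follows from the comparison principle for the prey equation using $v\ge 0$ and the bound on $u_0$. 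More carefully, since $v$ is nonnegative the prey satisfies $u_t \le d_1\mathcal N_1[u] + r_1 u(1-u)$, so $\limsup_{t\to\infty}\sup_{x}u(x,t)\le 1$. Feeding $u\le 1+o(1)$ and $u\ge\kappa$ into the $v$-equation on the relevant box, $v$ satisfies there
\beaa
v_t \ge d_2\mathcal N_2[v] + r_2 v[-1 + b\kappa - v],
\eeaa
provided $\kappa$ is chosen so that $b\kappa > 1$; this is where the hypothesis $b>1$ and the precise statement of Theorem~\ref{upos} matter, and one should track that $\kappa$ can indeed be taken with $b\kappa>1$ by shrinking the box slightly (or by first improving the lower bound on $u$ toward $u_*$). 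Actually, to reach all the way to speed $s_{**}$ one needs the sharper lower bound $u \gtrsim u_* = (1+a)/(1+ab)$, so that the predator's effective growth rate becomes $r_2[-1+bu_*-v] = r_2[(b-1)(1-ab)/(1+ab) - \ldots]$, consistent with the definition of $s_{**}$; I would obtain this sharper bound by a bootstrap, first using $v\lesssim b-1$ to get $u$ close to $1$, then using $u$ close to $1$ together with the already-established ordering to push $v$ down and $u$ up toward $(u_*,v_*)$ inside the box.

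The core of the argument is then a spreading/persistence statement for the scalar-type inequality satisfied by $v$. Here I would invoke the known spreading results for the nonlocal Fisher-KPP equation (cf.~\cite{jz09,DGLP19}): the equation $w_t = d_2\mathcal N_2[w] + r_2(b-1)(1-ab)(w/(b-1)) [\,1 - w/((b-1)(1-ab))\,]$-type nonlinearity, whose linearization at $0$ has spreading speed exactly $s_{**}$ (and, when instead one uses $b\kappa-1$ with the crude bound, a speed approaching $s_{**}$ as the box is refined), propagates at speed $s_{**}$ from compactly supported nontrivial data. The difficulty is that the differential inequality for $v$ only holds \emph{inside} the moving box $[(s+\ep')t,(s^{**}-\ep')t]$, not on all of $\bR$; outside the box we have no growth control, and in fact the prey may vanish there. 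The standard device is to construct a compactly supported, moving subsolution $\underline v(x,t)$ whose support stays strictly inside the box where the favorable estimate $u\ge\kappa$ holds: one takes a truncated principal-eigenfunction profile of the nonlocal operator $d_2\mathcal N_2 + (b-1)(1-ab)r_2$ on a large bounded interval, multiplies by a small constant, and lets the interval translate at a speed chosen in $(s_*', s_{**}-\ep)$ — this is the generalized-eigenvalue construction used in~\cite{lwz18,coville2020}. One checks that on its support this is a subsolution of the $v$-equation (using $u\ge\kappa$ there and $v\le b-1$ to absorb the $-v$ term), and that it vanishes at the edges, so the comparison principle applies despite the box being bounded.

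The main obstacle I expect is precisely this localization: propagating persistence across an interval of speeds when the favorable lower bound on $u$ is only available in a cone, and the reaction term for $v$ genuinely degenerates outside it (loss of comparison for the full system). Handling it requires (i) choosing the translation speed of the subsolution's support in the interior of $(s,\underline{s}^*)$ so the support remains in the good region for all large $t$, (ii) ensuring the principal eigenvalue on the truncated domain is still positive — which forces the domain to be large and hence couples the choice of $\ep$ to the size of the subsolution, and (iii) getting the subsolution's support to \emph{spread} (its width must grow linearly, or at least its right edge move at speed close to $\underline{s}^*$) rather than merely translate rigidly, so that the conclusion covers the whole box $[(s+\ep)t,(\underline{s}^*-\ep)t]$ and not just a thin moving strip. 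A clean way around (iii) is to prove the bound at a single speed $c\in(s,\underline{s}^*)$ first, via a rigidly-translating positive subsolution as above, and then upgrade to the full cone by varying $c$ and using that the estimate at each $c$ is uniform on compact time-shifts; alternatively one adapts the "expanding box" subsolutions of~\cite{DGLP19}. Once the uniform-in-cone lower bound for $v$ is in hand, the statement $\liminf_{t\to\infty}\inf_{(s+\ep)t\le x\le(\underline s^*-\ep)t} v(x,t)>0$ follows.
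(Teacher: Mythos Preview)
Your overall architecture matches the paper's: use the lower bound on $u$ from Theorem~\ref{upos}, then build a compactly supported moving subsolution for the $v$-equation (the paper uses the Weinberger function $\eta e^{-\rho z}\sin(\nu z)$ on an interval of length $\pi/\nu$), and finally vary the translation speed over an interval to sweep out the full cone $[(s+\ep)t,(\underline{s}^*-\ep)t]$.

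There is, however, a genuine confusion in your proposal that leads to an unnecessary and ill-motivated bootstrap. You write that ``to reach all the way to speed $s_{**}$ one needs the sharper lower bound $u \gtrsim u_*$'', and you then sketch an iterative improvement of $u$ toward $u_*$. This is wrong: the quantity $s_{**}$ is defined with linear part $r_2(b-1)(1-ab)$, and the crude bound from Theorem~\ref{upos} (in fact from \eqref{ubar} in its proof) is exactly $\kappa=1-a(b-1)$, for which
\[
-1+b\kappa \;=\; -1+b\bigl(1-a(b-1)\bigr) \;=\; (b-1)(1-ab).
\]
So the \emph{crude} lower bound on $u$ already produces the effective growth rate that defines $s_{**}$; no bootstrap toward $u_*$ is needed, and indeed $u\approx u_*$ would give the \emph{different} rate $r_2(b-1)/(1+ab)$, not $r_2(b-1)(1-ab)$. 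Your worry that ``$\kappa$ can indeed be taken with $b\kappa>1$'' is likewise automatically resolved by $ab<1$ and $b>1$.

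A second, smaller point: the paper avoids your localization concern (the favorable inequality for $v$ holding only inside the cone) more cleanly than you propose. Rather than working with $u$ directly and restricting to the cone, it introduces the global subsolution $\underline{u}$ solving \eqref{upperuu}, sets $h(x,t):=-1+b\underline{u}(x,t)$, and compares $v$ globally with the solution $\underline{v}$ of a scalar equation with coefficient $h$. The compactly supported subsolution $\tilde{v}$ is then compared to $\underline{v}$; since $\tilde{v}$ vanishes outside a moving interval of fixed width $\pi/\nu$, one only needs the lower bound on $h$ \emph{there}, and this interval is arranged to sit inside $[(s+\ep)t,(s^{**}-\ep)t]$ for large $t$. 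Your ``expanding box'' or ``vary $c$ then patch'' alternatives would also work, but the paper's device of passing through $\underline{u}$ and $\underline{v}$ is tidier.
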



\subsection{Spreading behaviours: local case}\label{subsec:spread_local}
As in the nonlocal framework, we expect that for the standard diffusion case~\eqref{spp}, the persistence of the prey and predator shall be determined by comparing the climate shifting speed $s$ with
the maximal speed $s^*$ of prey and the maximal speed $s_*$ of predator, respectively.
Here the maximal speeds are defined by
\be\label{sspeed}
s^*:=2\sqrt{d_1r_1},\;
s_{*}:=2\sqrt{d_2r_2(b-1)},
\ee
and similarly as before these are the spreading speeds of the solutions of the $u$-equation when $\alpha \equiv 1$ and $v \equiv 0$, and of the $v$-equation when $u \equiv 1$, respectively.

Then the same results as Theorems~\ref{spread_fastprey} and~\ref{spread_fastpred} hold for the solution $(u,v)$ of \eqref{spp} and~\eqref{pp-ic}. For the sake of conciseness, we do not re-write these results here, but refer to \S5 for some details. Moreover, the same result as Theorem~\ref{upos} holds for any solution $(u,v)$ of \eqref{spp}, by re-defining $s^{**}:=2\sqrt{d_1r_1[1-a(b-1)]}$, when $b>1$ and $ab<1$.

{However, in the local diffusion case we are actually able to deal with the persistence in all the remaining moving frames with speeds {between $s$ and
$$\uns := \min \{ s^*, s_*\}>s.$$}
Indeed, our last main result is the following persistence theorem:
\begin{theorem}\label{svpos}
Assume that $b>1$. Let $(u,v)$ be the solution of \eqref{spp} and \eqref{pp-ic}, where $0 \leq u_0 \leq 1$, $0 \leq v_0 \leq b-1$ are both nontrivial, continuous and compactly supported. If $s < \uns$, then for any $\eta\in(0,(\uns-s)/2)$,
$$\lim_{t \to +\infty} \{\sup_{(s+\eta)t \leq x\leq ( \uns - \eta) t} (|  u(x,t)- u_* | + |v(x,t) - v_*|) \}=  0, $$
where {$(u_*,v_*)$ is the positive co-existence steady state defined in \eqref{bc0}.}
\end{theorem}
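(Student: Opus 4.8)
\medskip\noindent\textbf{Proof strategy.}
The plan is to combine rough a priori bounds with a persistence estimate on a slightly enlarged window, and then to upgrade these to the stated sharp convergence by a squeezing iteration based on the global asymptotic stability of $(u_*,v_*)$ for the kinetic ODE system. For the a priori part: since $\alpha\le 1$, the constant $1$ is a supersolution of the $u$-equation, whence $0\le u\le 1$ for all $t>0$; inserting this into the $v$-equation shows that $b-1$ is a supersolution there, so $0\le v\le b-1$ as well. Comparing $u$ with the KPP solution of $\bar u_t=d_1\bar u_{xx}+r_1\bar u(1-\bar u)$ and $v$ with the KPP solution of $\tilde v_t=d_2\tilde v_{xx}+r_2\tilde v(b-1-\tilde v)$, and using that $u_0$ and $v_0$ are compactly supported, the classical spreading result with speeds $s^*$ and $s_*$ gives $\limsup_{t\to\infty}u(x,t)\le1$ and $\limsup_{t\to\infty}v(x,t)\le b-1$ uniformly for $x\le(\uns-\eta/2)t$. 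Finally, since $\alpha(\infty)=1$, for every $\delta>0$ there is $z_\delta$ with $\alpha\ge 1-\delta$ on $[z_\delta,\infty)$; hence on the moving region $\{x\ge(s+\eta/2)t\}$, which contains the window, and for $t$ large, $\alpha(x-st)\in[1-\delta,1]$, so the system there is a $\delta$-perturbation of the homogeneous predator--prey system $U_t=d_1U_{xx}+r_1U(1-U-aV)$, $V_t=d_2V_{xx}+r_2V(-1+bU-V)$.

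The heart of the matter is a persistence estimate: there are $\kappa_0>0$ and $\eta'\in(0,\eta)$ such that $u,v\ge\kappa_0$ on $\{(s+\eta')t\le x\le(\uns-\eta')t\}$ for all large $t$. For the prey one first exploits that, by $v\le\tilde v$, the predator is negligible just behind the prey's own leading edge $x\approx s^*t$, so the $u$-equation there reduces up to an $o(1)$ term to the scalar KPP equation; a compactly supported subsolution riding in any frame of speed $c<s^*$ then keeps $u$ above a positive constant there, and this lower bound is pushed leftwards across the window by further moving-frame subsolutions together with the parabolic Harnack inequality, using in addition that $v$ cannot accumulate where $u$ is small (the predator growth rate $-1+bu-v$ being negative there). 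Once $u\ge\kappa$ on the window, a finite iteration using $0\le u\le1$, $0\le v\le b-1$ and the kinetic flow raises $\kappa$ until $b\kappa>1$; the $v$-equation is then a KPP equation with positive growth rate $r_2(b\kappa-1)$ and the same subsolution construction yields the lower bound for $v$, now up to a speed that the hypothesis $s<\uns$ keeps strictly above $\uns-\eta'$. (When $ab<1$ one may alternatively start from the persistence zones furnished by the local analogues of Theorems~\ref{upos} and~\ref{vpos} and extend them; when $ab\ge1$ these tools are unavailable and the persistence must be built as just described.)

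With the a priori bounds and persistence in hand, the final step is a squeezing argument. Let $\omega(\cdot\,;\tau)$ denote the time-$\tau$ flow of the kinetic system; by the classical Lotka--Volterra Lyapunov function (Volterra's condition $DA+A^{\mathrm T}D<0$ holds for a suitable positive diagonal $D$, where $A$ is the interaction matrix), $(u_*,v_*)$ is globally asymptotically stable in the open positive quadrant for every $a>0$ and $b>1$; hence for each $\mu>0$ there is $\tau_\mu$ with $\omega\bigl([\kappa_0,1]\times[\kappa_0,b-1];\tau\bigr)\subset B_\mu(u_*,v_*)$ for all $\tau\ge\tau_\mu$. One transfers this to the PDE by a time-iteration over slabs of length $\tau\ge\tau_\mu$: at step $n$, and for $(x_0,t_0)$ deep inside the window with $t_0$ large, the solution on the cylinder $\{|x-x_0|\le R\}\times[t_0-\tau,t_0]$ is trapped between spatially constant sub/supersolutions generated by the cooperative four-component ODE obtained from the kinetic reactions through their monotone dependencies (prey decreasing in predator, predator increasing in prey); the $\delta$-perturbation of $\alpha$ and the lateral-boundary contributions are absorbed using interior parabolic estimates and the bounds obtained at step $n-1$, so that the closeness to $(u_*,v_*)$ improves geometrically while the window is shrunk only by a summable amount. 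Letting $\mu\to0$, and since $\eta$ was an arbitrarily small fixed number, this yields the stated uniform convergence on $\{(s+\eta)t\le x\le(\uns-\eta)t\}$.

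The step I expect to be the genuine obstacle is the persistence estimate for \emph{both} species on the whole window, and especially for the predator in the regime $ab\ge1$, where the predator strongly depresses the prey wherever it is present: the absence of a comparison principle for the full system rules out the naive ``equilibrium-envelope'' iteration (which closes only when $ab<1$) and forces the coupled subsolution analysis above, with the coupling feedback (the predator decaying where the prey is scarce) playing an essential role. Once persistence is secured, the squeezing step is soft, resting only on the global stability of the kinetic equilibrium.
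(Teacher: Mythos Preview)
Your overall plan (persistence on the window, then squeeze to $(u_*,v_*)$) matches the paper's, but both steps have a real gap when $ab\ge 1$, which is allowed by the hypothesis $b>1$.

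The convergence step is where the problem is sharpest. You correctly observe that the Volterra Lyapunov function makes $(u_*,v_*)$ globally asymptotically stable for the two-component kinetic ODE for every $a>0$, $b>1$. But this does \emph{not} carry over to the four-component envelope ODE for $(\underline U,\bar U,\underline V,\bar V)$ that you use to trap the PDE solution. That system admits the non-diagonal rest point $\bigl(\max\{0,1-a(b-1)\},\,1,\,0,\,b-1\bigr)$, and linearising in the $\underline V$-direction gives eigenvalue $r_2(b-1)(1-ab)$; the rest point is therefore attracting precisely when $ab\ge 1$, and starting from $(\kappa_0,1,\kappa_0,b-1)$ with $\kappa_0$ small the envelope is driven there, so the bounds never tighten. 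The same obstruction kills the ``raise $\kappa$ until $b\kappa>1$'' iteration in your persistence step: the recursion $\bar v\mapsto\underline u=1-a\bar v$, $\underline u\mapsto\underline v=(b\underline u-1)_+$ stalls at $\underline v=0$ whenever $ab\ge 1$. You rightly flag persistence in the regime $ab\ge 1$ as delicate, but the squeezing step has the very same defect.

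The paper avoids both problems by replacing envelopes with parabolic compactness. Persistence (Theorems~\ref{spprey} and~\ref{sppred}, via Lemmas~\ref{lem1}, \ref{lem2}, \ref{lem1_v}, \ref{lem2_v}) is obtained by contradiction: along a sequence where $u$ (resp.\ $v$) becomes small in the window, extract an entire-in-time solution of the homogeneous system $\alpha\equiv 1$, use the strong maximum principle to force one component to vanish identically, and reach a contradiction with a scalar KPP subsolution. For convergence, once $u\ge\kappa_1>0$ and $v\ge\kappa_2>0$ on the window, any limit along $(x_n,t_n)$ with $t_n\to\infty$ in the window is an entire solution of the homogeneous two-component system bounded away from $0$; the Lyapunov function is then applied \emph{directly to this genuine entire solution} (\cite[Lemma~4.1]{dggs2021}), forcing it to equal $(u_*,v_*)$. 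The point is that the Lyapunov identity holds along actual trajectories of the two-component system, not along the decoupled envelope, and this needs no restriction on $ab$.
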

The main idea of the proof of Theorem~\ref{svpos} is motivated by a method used in \cite{dggs2021,dgm} which is different from that for the proof of Theorems~\ref{upos} and~\ref{vpos}. It strongly relies on parabolic estimates and the compactness of the set of solutions with bounded initial data, which is a much more difficult issue in the nonlocal diffusion framework. This result offers a clearer picture for the large-time dynamics of the solution in the local diffusion case, as it shows that both species persist in the intermediate moving ranges and also converge to the co-existence steady state.}

\bigskip

\paragraph{\textbf{Plan of the paper.}}
The rest of this paper is organized as follows. In \S3, we provide a proof of Theorems~\ref{th:forced} and \ref{th:forced2}, that is the existence and non-existence of forced waves for \eqref{pp}. Then we study the spatial-temporal dynamics of \eqref{pp} in \S4. As we will see, our method for the nonlocal dispersal case can be easily applied to the standard diffusion system. Finally, in \S5, we provide some detailed proofs for the refined results in the standard diffusion case.

\section{Forced waves: the nonlocal case}
\setcounter{equation}{0}

In this section, we shall derive the existence of forced waves for \eqref{pp}.
Following \cite{lwz18}, we set $(\phi,\psi)(z):=(\hat\phi,\hat\psi)(-z)$ for a solution $(\hat\phi,\hat\psi)$ of \eqref{TWS0}.
Then $(\phi,\psi)$ satisfies
\be\label{TWS}
\begin{cases}
s\phi'(z)=d_1\mathcal{N}_1[\phi](z)+r_1\phi(z)[\alpha(-z)-\phi(z)-a\psi(z)],\;z\in\bR,\\
s\psi'(z)=d_2\mathcal{N}_2[\psi](z)+r_2\psi(z)[-1+b\phi(z)-\psi(z)],\;z\in\bR.
\end{cases}
\ee
Here we have used
\beaa
{\mathcal{N}_1[\hat\phi](-z)=\mathcal{N}_1[\phi](z),\quad\mathcal{N}_2[\hat\psi](-z)=\mathcal{N}_2[\psi](z),}
\eeaa
by using the symmetry of $J_i$, $i=1,2$.
Note that condition \eqref{bc0} becomes
\be\label{bc}
(\phi,\psi)(-\infty)=(u_*,v_*),\quad(\phi,\psi)(\infty)=(0,0).
\ee

Now we introduce the following {notion of} generalized upper-lower solutions of \eqref{TWS}.

\begin{definition}\label{lus}
Continuous functions $(\overline{\phi },\overline{\psi })$ and $(\underline{\phi},\underline{\psi})$
are called a pair of upper and lower solutions of \eqref{TWS} if
$\underline{\phi}(z)\le \overline{\phi}(z)$, $\underline{\psi}(z))\le\overline{\psi}(z))$ for all $z \in \mathbb{R}$
and the following inequalities
\begin{eqnarray}
&&s\overline{\phi}^{\prime}(z)\ge d_{1}\mathcal{N}_{1}[\overline{\phi}](z)+r_{1}\overline{\phi}(z)[\alpha(-z)-\overline{\phi}(z)-a\underline{\psi}(z)],  \label{u1} \\
&&s\overline{\psi}^{\prime}(z)\ge d_{2}\mathcal{N}_{2}[\overline{\psi}](z)+r_{2}\overline{\psi}(z)[-1+b\overline{\phi }(z)-\overline{\psi}(z)],  \label{u2} \\
&&s\underline{\phi}^{\prime}(z)\le d_{1}\mathcal{N}_{1}[\underline{\phi}](z)+r_{1}\underline{\phi}(z)[\alpha(-z)-\underline{\phi}(z)- a\overline{\psi}(z)],  \label{l1} \\
&&s\underline{\psi}^{\prime}(z)\le d_{2}\mathcal{N}_{2}[\underline{\psi}](z)+r_{2}\underline{\psi}(z)[-1+ b\underline{\phi}(z)-\underline{\psi}(z)]  \label{l2}
\end{eqnarray}%
hold for all $z \in \mathbb{R}\setminus E$ for some finite subset $E$ of $\mathbb{R}$.
\end{definition}


Then, from Schauder's fixed-point theorem, we have the following lemma.

\begin{lemma}\label{luslem}
Let $s>0$ be given. Let $(\ophi,\opsi)$ and $(\uphi,\upsi)$ be a pair of upper and lower solutions of \eqref{TWS}, {and further assume that
$$0 \leq \uphi \leq \ophi \leq 1, \qquad 0 \leq \upsi \leq \opsi \leq b-1.$$
Then} \eqref{TWS} admits a solution $(\phi,\psi)$ such that
$\uphi(\xi)\le \phi(\xi)\le \ophi(\xi)$ and $\upsi(\xi)\le\psi(\xi)\le\opsi(\xi)$ for all $\xi\in\mathbb{R}$.
\end{lemma}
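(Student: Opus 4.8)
The plan is to recast \eqref{TWS} as a fixed-point problem for an integral operator on the order interval $\Gamma$ cut out by the given upper and lower solutions, and then to apply Schauder's theorem; this follows a by-now standard scheme, and I would proceed along the lines of \cite{lwz18}. \emph{Set-up.} Since on $\Gamma$ we will only deal with pairs satisfying $0\le\phi\le1$ and $0\le\psi\le b-1$, I would first fix constants $\beta_1,\beta_2>0$ so large that the operators
\[ H_1[\phi,\psi](z):=d_1(J_1\ast\phi)(z)+r_1\phi(z)\big[\alpha(-z)-\phi(z)-a\psi(z)\big]+\beta_1\phi(z), \]
\[ H_2[\phi,\psi](z):=d_2(J_2\ast\psi)(z)+r_2\psi(z)\big[-1+b\phi(z)-\psi(z)\big]+\beta_2\psi(z) \]
are order-preserving in $\phi$ and order-reversing in $\psi$ (for $H_1$), and order-preserving in both $\phi$ and $\psi$ (for $H_2$), on those ranges. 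Using that $J_i\ast(\cdot)$ is order-preserving and that $\alpha(-\infty)\le\alpha(-z)$, one checks that $\beta_1\ge r_1[2+a(b-1)-\alpha(-\infty)]$ and $\beta_2\ge r_2(2b-1)$ suffice (the order-reversal of $H_1$ in $\psi$ is automatic since $\phi\ge0$). With $c_i:=(d_i+\beta_i)/s>0$ I define $F=(F_1,F_2)$ by
\[ F_1[\phi,\psi](z):=\frac1s\int_{-\infty}^{z}e^{-c_1(z-y)}H_1[\phi,\psi](y)\,dy \]
and $F_2$ analogously with $c_2,H_2$; the integrals converge because the $H_i$ are bounded whenever $0\le\phi\le1$, $0\le\psi\le b-1$. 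Then $w:=F_1[\phi,\psi]$ is $C^1$ and solves $sw'+(d_1+\beta_1)w=H_1[\phi,\psi]$ on $\bR$, and likewise for $F_2$, so a fixed point of $F$ is exactly a solution of \eqref{TWS}.

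\emph{Invariance of $\Gamma$.} Set $\Gamma:=\{(\phi,\psi)\in C(\bR;\bR^2):\ \uphi\le\phi\le\ophi,\ \upsi\le\psi\le\opsi\}$, which is nonempty, closed and convex and is contained in the slab $0\le\phi\le1$, $0\le\psi\le b-1$. The key step is to show $F(\Gamma)\subseteq\Gamma$. Fix $(\phi,\psi)\in\Gamma$ and put $w=F_1[\phi,\psi]$. Monotonicity of $H_1$ gives $H_1[\phi,\psi]\le H_1[\ophi,\upsi]$ pointwise, while \eqref{u1}, after adding $(d_1+\beta_1)\ophi$ to both sides, reads $s\ophi'+(d_1+\beta_1)\ophi\ge H_1[\ophi,\upsi]$ on $\bR\setminus E$. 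Hence $s(\ophi-w)'+(d_1+\beta_1)(\ophi-w)\ge0$ off the finite set $E$, so $z\mapsto e^{c_1z}(\ophi-w)(z)$ is continuous on $\bR$ with nonnegative derivative off $E$, hence nondecreasing; being bounded, $e^{c_1z}(\ophi-w)(z)\to0$ as $z\to-\infty$, so this function is $\ge0$ throughout and $w\le\ophi$. The same argument with \eqref{l1} gives $w\ge\uphi$, and \eqref{u2}, \eqref{l2} give $\upsi\le F_2[\phi,\psi]\le\opsi$. Thus $F(\Gamma)\subseteq\Gamma$.

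\emph{Compactness and conclusion.} I would equip $C(\bR;\bR^2)$ with the weighted norm $\|(\phi,\psi)\|_\mu:=\sup_{z\in\bR}(|\phi(z)|+|\psi(z)|)e^{-\mu|z|}$ for a small $\mu>0$, which makes it a Banach space in which $\Gamma$ is closed, bounded and convex. Continuity of $F$ on $\Gamma$ for $\|\cdot\|_\mu$ follows from the integral formulas and dominated convergence, since $\|\cdot\|_\mu$-convergence in $\Gamma$ forces locally uniform, uniformly bounded convergence of the $H_i$. For compactness, observe that for $(\phi,\psi)\in\Gamma$ the images $w_i=F_i[\phi,\psi]$ are uniformly bounded (by $1$ and $b-1$) and, from $sw_i'=H_i[\phi,\psi]-(d_i+\beta_i)w_i$ with bounded right-hand side, have uniformly bounded derivatives; by Arzel\`a--Ascoli together with the exponential weight, $F(\Gamma)$ is precompact in $(C(\bR;\bR^2),\|\cdot\|_\mu)$. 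Schauder's fixed-point theorem then yields $(\phi,\psi)\in\Gamma$ with $F(\phi,\psi)=(\phi,\psi)$, i.e.\ a solution of \eqref{TWS} with $\uphi\le\phi\le\ophi$ and $\upsi\le\psi\le\opsi$, as claimed.

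\emph{Where the difficulty lies.} The crux is the invariance step: the choice of the $\beta_i$ and, more importantly, the monotonicity pattern ($H_1$ order-reversing in $\psi$, $H_2$ order-preserving in $\phi$) must be matched precisely to the way the predator--prey coupling is encoded in Definition~\ref{lus}, and some care is needed at the exceptional set $E$, where the upper and lower solutions are only continuous — this is handled cleanly by the first-order linear comparison above, which does not see the sign of the kinks. The nonlocal terms cause no extra trouble here, since $J_i\ast\phi$ is a bounded continuous function of the same regularity class we already control; by contrast, the analogous compactness in stronger norms is exactly what becomes delicate in the nonlocal framework and is the reason Theorem~\ref{svpos} is proved only in the local case.
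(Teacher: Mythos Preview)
Your proposal is correct and follows essentially the same approach as the paper's proof: both recast \eqref{TWS} as a fixed-point problem for an integral operator obtained by adding a large linear damping $\beta_i\phi$ (resp.\ $\beta\phi$) to make the right-hand sides monotone, verify invariance of the order interval $\Gamma$ via this monotonicity and the upper/lower solution inequalities, introduce a weighted norm $\|\cdot\|_\mu$, and apply Schauder. The only cosmetic differences are that you use separate $\beta_1,\beta_2$ while the paper uses a common $\beta$, and that you phrase the invariance step as a first-order differential inequality for $e^{c_1 z}(\ophi-w)$ whereas the paper performs the equivalent direct integration $\int_{-\infty}^z e^{-\beta(z-y)/s}[\uphi'(y)+\tfrac{\beta}{s}\uphi(y)]\,dy=\uphi(z)$; your handling of the finite exceptional set $E$ via continuity is a nice touch that the paper leaves implicit.
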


\begin{proof}
First, let $X$ be the space of all uniformly continuous and bounded functions defined in $\mathbb{R}$. Then $X$ is a Banach space equipped with the sup-norm.
Furthermore, we let
$$\tilde{X}:=\{(w_1,w_2)\in X^2:0\le w_1(x)\le 1,0\le w_2(x)\le b-1,\,\forall\, x\in\mathbb{R} \}.$$

Next, we consider the nonlinear operators $F_1$ and $F_2$ defined on $\tilde{X}$ by
\begin{align*}
F_1(\phi,\psi)(z)&:=\beta\phi(z) + d_1\mathcal{N}_1[\phi](z)+r_1\phi(z)[\alpha(-z)-\phi(z)-a\psi(z)],\;z\in\bR,\\
F_2(\phi,\psi)(z)&:=\beta\psi(z) + d_2\mathcal{N}_2[\psi](z)+r_2\psi(z)[-1+b\phi(z)-\psi(z)],\;z\in\bR,
\end{align*}
for some positive constant $\beta$ satisfying
\be\label{beta-con}
{\beta>\max\{d_1 + r_1[-\alpha(-\infty)+2+a(b-1)], d_2+r_2(2b-1)\}.}
\ee
We also define the following operators:
\begin{align*}
P_1(\phi,\psi)(z)&:=\frac{1}{s}\int_{-\infty}^z{\exp\left(-\frac{\beta(z-y)}{s}\right)F_1(\phi,\psi)(y)dy},\;z\in\bR,\\
P_2(\phi,\psi)(z)&:=\frac{1}{s}\int_{-\infty}^z{\exp\left(-\frac{\beta(z-y)}{s}\right)F_2(\phi,\psi)(y)dy},\;z\in\bR.
\end{align*}
Set $P=(P_1,P_2)$. Then $P:\tilde{X} \to X^2$ and it is easy to check that a fixed point of $P$ is a solution of \eqref{TWS}.
Therefore, it remains to show that $P$ has a fixed point.


Let $\mu>0$ be a constant {such that $\mu<{\beta}/{s}$} and let
$$|(\phi,\psi)|_\mu=\sup_{z\in\bR}\{\max(|\phi(z)|,|\psi(z)|)e^{-\mu|z|}\},\;(\phi,\psi)\in \tilde{X}.$$
Then it is easy to check that $(\tilde{X},|\cdot|_\mu)$ is a Banach space. Moreover, the set
$$\Gamma:=\{(\phi,\psi)\in \tilde{X}:\uphi\le\phi\le\ophi,\upsi\le\psi\le\opsi\}$$
is a non-empty convex, closed and bounded set in $(\tilde{X},|\cdot|_\mu)$.

Now, we show that $P$ maps $\Gamma$ into $\Gamma$. Let $(\phi,\psi)\in\Gamma$. Then, using \eqref{beta-con}, we can check that
\beaa
\mbox{$F_1(\phi,\psi)(z)\ge F_1(\uphi,\opsi)(z)$ for all $z\in\bR$.}
\eeaa
Thus we obtain $P_1(\uphi,\opsi)\le P_1(\phi,\psi)$.
On the other hand, by the definition of upper-lower solutions, we have
\begin{align*}
P_1(\uphi,\opsi)(z)&=\frac{1}{s}\int_{-\infty}^z{\exp\left(-\frac{\beta(z-y)}{s}\right)F_1(\uphi,\opsi)(y)dy}\\
&\ge\int_{-\infty}^z{\exp\left(-\frac{\beta(z-y)}{s}\right)\left(\uphi'(y)+\frac{\beta}{s}\uphi(y)\right)dy}=\uphi(z)
\end{align*}
for all $z\in\bR$. Hence $P_1(\phi,\psi)\ge\uphi$.
Similarly, we have
\begin{align*}
P_1(\phi,\psi)\le P_1(\ophi,\upsi),\quad P_2(\uphi,\upsi)\le P_2(\phi,\psi)\le P_2(\ophi,\opsi),
\end{align*}
by the choice of $\beta$ in \eqref{beta-con}.
Hence we obtain that $P(\Gamma)\subset \Gamma$.
%

Finally, by the choice of $\mu$, we can show that the mapping $P:\Gamma\to\Gamma$ is completely continuous with respect to the norm $|\cdot|_\mu$.
Since a proof of the complete continuity of $P$ can be found for instance in~\cite{hlr05,m01}, we omit it here. Hence it follows from Schauder's fixed-point theorem that $P$ has a fixed point in $\Gamma$.
This completes the proof of the lemma.
\end{proof}

\subsection{Existence of wave profiles}

This section is devoted to the existence of a solution to~\eqref{TWS}. {To construct the front type and mixed front-pulse type, we need to introduce different pairs of upper and lower solutions of~\eqref{TWS}, which will in turn ensure, in the next subsection, the correct asymptotics at infinity.}

First, for the front type waves, we assume here, in addition to $b>1$, the condition $ab<1$.
{It follows that $a (b-1) <1$, which ensures that, even when there are many predators ($v \equiv b-1$), the prey may still survive at least in an environment without climate change.}

In particular, it follows from \cite[Theorem 4.5]{lwz18} that there is a {{non-increasing} positive} function $\uphi$ such that
\be\label{uphi}
s\uphi'(z)=d_1\mathcal{N}_1[\uphi](z) + r_1\uphi(z)[\alpha(-z)-a(b-1)-\uphi(z)],\;\;z\in\bR,
\ee
 and
\be\label{uphi_lim}
\lim_{z\to -\infty}\uphi(z)=1-a(b-1)>0, \;\;\lim_{z\to\infty}\uphi(z)=0.
\ee
Furthermore, since also $b[1-a(b-1)]>1$, it follows from \cite[Theorem 4.5]{lwz18} again that there exists a {{non-increasing} positive} function $\upsi$ such that
\be\label{upsi}
s\upsi'(z)=d_2\mathcal{N}_2 [\upsi ](z) + r_2\upsi(z)[-1+b\uphi(z)-\upsi(z)],\;\;z\in\bR,
\ee
 and
\be\label{upsi_lim}
\lim_{z\to -\infty}\upsi(z)=-1+b[1-a(b-1)]>0, \;\;\lim_{z\to\infty}\upsi(z)=0.
\ee

\begin{lemma}\label{existence}
Suppose that \eqref{ab2} holds. Then there exists a solution $(\phi,\psi)$ of \eqref{TWS} such that {$0< \uphi\le\phi\le 1$ and $0<\upsi\leq \psi\leq b-1$} in $\bR$, where $\uphi$ and $\upsi$ are solutions of \eqref{uphi}-\eqref{uphi_lim} and \eqref{upsi}-\eqref{upsi_lim}, respectively.
\end{lemma}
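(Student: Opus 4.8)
The plan is to apply Lemma~\ref{luslem} with a carefully chosen pair of upper and lower solutions. The lower solution will simply be $(\uphi,\upsi)$, the pair constructed in~\eqref{uphi}--\eqref{upsi_lim} from the scalar results of~\cite{lwz18}; these are genuine subsolutions of~\eqref{TWS} in the sense of Definition~\ref{lus}, as I will check below. The upper solution will be the constant pair $(\ophi,\opsi):=(1,b-1)$. With this choice one has immediately $0\le\uphi\le\ophi\le 1$ and $0\le\upsi\le\opsi\le b-1$, so the structural hypothesis of Lemma~\ref{luslem} holds once the differential inequalities are verified; the lemma then produces a solution $(\phi,\psi)$ of~\eqref{TWS} sandwiched between these, which is exactly the claimed conclusion (positivity of $\phi,\psi$ following from $\phi\ge\uphi>0$, $\psi\ge\upsi>0$).

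First I would verify that $(\ophi,\opsi)=(1,b-1)$ is an upper solution. Since $\ophi\equiv 1$ we have $\ophi'=0$ and $\mathcal N_1[\ophi]=0$, so inequality~\eqref{u1} reduces to $0\ge r_1\cdot 1\cdot[\alpha(-z)-1-a\upsi(z)]$, i.e. $\alpha(-z)-1-a\upsi(z)\le 0$; this holds because $\alpha(-z)\le\alpha(\infty)=1$ by ($\alpha$2) and $\upsi\ge 0$. Similarly $\opsi\equiv b-1$ gives $\opsi'=0$, $\mathcal N_2[\opsi]=0$, and~\eqref{u2} becomes $0\ge r_2(b-1)[-1+b\ophi-\opsi]=r_2(b-1)[-1+b-(b-1)]=0$, which holds with equality. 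Next I would check that $(\uphi,\upsi)$ is a lower solution: comparing~\eqref{uphi} with~\eqref{l1}, it suffices that $r_1\uphi[\alpha(-z)-a(b-1)-\uphi]\le r_1\uphi[\alpha(-z)-\uphi-a\opsi]$, i.e. $-a(b-1)\le -a\opsi=-a(b-1)$, which is an equality; and comparing~\eqref{upsi} with~\eqref{l2}, we need $r_2\upsi[-1+b\uphi-\upsi]\le r_2\upsi[-1+b\uphi-\upsi]$, again an equality. Finally, the ordering $\uphi\le 1$ follows since $\uphi$ is non-increasing with $\uphi(-\infty)=1-a(b-1)<1$, and $\upsi\le b-1$ follows since $\upsi$ is non-increasing with $\upsi(-\infty)=-1+b[1-a(b-1)]=(b-1)-ab(b-1)<b-1$ (using $ab<1$, $b>1$).

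With both inequalities established, Lemma~\ref{luslem} applies directly and yields the solution $(\phi,\psi)$ with $\uphi\le\phi\le 1$ and $\upsi\le\psi\le b-1$ on $\bR$; since $\uphi>0$ and $\upsi>0$ everywhere, $\phi$ and $\psi$ are positive, completing the proof. I do not expect any serious obstacle in this particular lemma: the constant pair $(1,b-1)$ is tailor-made so that the nonlocal terms and derivatives vanish, reducing~\eqref{u1}--\eqref{u2} to the sign conditions $\alpha\le 1$ and $b>1$ which are built into the hypotheses, and the lower-solution inequalities hold with equality by construction of $\uphi,\upsi$. The only mildly delicate point is confirming the monotonicity/limit bounds that give $\uphi\le 1$ and $\upsi\le b-1$, which is where the assumption $ab<1$ (equivalently $a(b-1)<1$) enters; this is precisely the condition flagged in the text as ensuring the prey survives even in the presence of the maximal predator density. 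The genuinely hard work — extracting the correct asymptotics $(\phi,\psi)(-\infty)=(u_*,v_*)$ and $(\phi,\psi)(+\infty)=(0,0)$ from this sandwiched solution — is deferred to the next subsection and is not part of this lemma.
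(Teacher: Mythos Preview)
Your proof is correct and follows essentially the same approach as the paper: the same upper solution $(\ophi,\opsi)=(1,b-1)$, the same lower solution $(\uphi,\upsi)$, the same verifications of \eqref{u1}--\eqref{l2}, and the same appeal to Lemma~\ref{luslem}. Your justification that $\uphi\le 1$ and $\upsi\le b-1$ via monotonicity and the limiting values is slightly more explicit than the paper's, which simply asserts it as clear.
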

\begin{proof}
Let $(\ophi,\opsi)=(1,b-1)$.
It is clear that $\uphi\le 1$ and $\upsi\le b-1$.
We only need to check that $(\uphi,\upsi)$ and $(\ophi,\opsi)$ satisfies (\ref{u1})-(\ref{l2}).

Since {$\mathcal{N}_1 [\ophi]=\int_\bR{J_1(y)dy}-1=0$} and $\alpha(-z)\le1$ for $z\in\bR$, we have that
\begin{align*}
d_1\mathcal{N}_1 \left[ \ophi \right](z)+r_1\ophi(z)[\alpha(-z)-\ophi(z)-a\upsi(z)]&\le r_1[\alpha(-z)-1]\le 0 = s\ophi'(z),\;\forall\, z\in\bR,
\end{align*}
and so \eqref{u1} holds.
Similarly, (\ref{u2}) holds because
\begin{align*}
 d_2\mathcal{N}_2 \left[\opsi \right](z)+r_2\opsi(z)[-1+b\ophi(z)-\opsi(z)]=r_2(b-1)[-1+b -(b-1)]=0=s\opsi'(z).
\end{align*}
Also, it follows from (\ref{uphi}) and (\ref{upsi}) that
\begin{align*}
s\underline{\phi}^{\prime}(z)&= d_{1}\mathcal{N}_{1}[\underline{\phi}](z)+r_{1}\underline{\phi}(z)[\alpha(-z)-a(b-1)-\underline{\phi}(z)]\\
&=d_{1}\mathcal{N}_{1}[\underline{\phi}](z)+r_{1}\underline{\phi}(z)[\alpha(-z) -a\overline{\psi}(z)-\underline{\phi}(z)],\;\forall\,z\in\bR,
\end{align*}
and
\begin{align*}
s\underline{\psi}^{\prime}(z)= d_{2}\mathcal{N}_{2}[\underline{\psi}](z)+r_{2}\underline{\psi}(z)[-1+ b\underline{\phi}(z)-\underline{\psi}(z)],\;\forall\,z\in\bR,
\end{align*}
in other words (\ref{l1}) and (\ref{l2}) hold.
Therefore, $(\uphi,\upsi)$ and $(1,b-1)$ are a pair of upper and lower solutions. The lemma is proved by applying Lemma~\ref{luslem}.
\end{proof}

Secondly, we deal with the mixed front-pulse type waves. To find a suitable pair of upper and lower solutions, we define
$$\Delta(\lambda,s):=d_2\left[\int_\bR{J_2(y)e^{\lambda y}dy} -1\right] + r_2(b-1)-s\lambda.$$
Then we have the following properties for $\Delta(\lambda,s)$:

(i) for $s>s_*$, $\Delta(\lambda,s)=0$ has two distinct positive roots $\ld_1,\ld_2$ with $\lambda_1<\lambda_2$ such that $\Delta(\lambda,s)<0$ if and only if $\lambda_1<\lambda<\lambda_2$;

(ii) for $s=s_*$, $\Delta(\lambda,s)=0$ has a double root at $\ld=\ld_*$ such that $\Delta(\ld,s)>0$ for all $\ld\neq\ld_*$;

(iii) for $s < s^*$, $\Delta(\lambda,s)>0$ for all $\lambda\ge0$.

{\noindent These properties follow from the straightforward facts that $\lambda \mapsto \Delta (\lambda,s)$ is convex, $\Delta (\lambda,s)$ goes to $+\infty$ as $\lambda \to +\infty$, and $\Delta (0,s) = r_2 (b -1) >0$.}

{\bf Case 1: $s>s_*$.} We first define
\begin{equation}\label{supsub_s>s*}
\left\{
\begin{array}{ll}
\ophi(z)\equiv1,& \opsi(z)=\min\{b-1,e^{\lambda_1z}\},  \vspace{3pt}\\
 \uphi (z) = \max\{0,1-\eta e^{\mu z}\} ,& \upsi(z)=\max\{0,e^{\lambda_1z}-ke^{(\lambda_1+\mu)z}\},
\end{array}
\right.
\end{equation}
where $\mu\in(0,\ld_2-\ld_1)$ and $\eta, k>1$ are constants to be determined later.

We claim that $(\ophi,\opsi)$ and $(\uphi,\upsi)$ are a pair of upper and lower solutions of \eqref{TWS} for some constants $k>1$ and $\mu\in(0,\ld_2-\ld_1)$. Indeed, we can easily check that (\ref{u1}) holds. Next we turn to (\ref{u2}). We let $z_1$ {be} such that $e^{\lambda_1 z_1}=b-1$. Then
$$\opsi(z)=\begin{cases}
e^{\lambda_1 z},\;\;&z\le z_1,\\
b-1, &z\ge z_1,
\end{cases}$$
and it is clear that (\ref{u2}) holds for $z>z_1$.
For $z<z_1$, we compute
\beaa
&&d_2\mathcal{N}_2[\opsi](z)-s\opsi'(z)+r_2\opsi(z)[-1+b-\opsi(z)]\\
&\leq &d_2\left[\int_\bR{J_2(y)e^{\lambda_1(z-y)}dy}-\expl\right]-s\lambda_1\expl + r_2\expl(-1+b-\expl)\\
&\leq &\expl\left(d_2\Big[\int_\bR{J_2(y)e^{\lambda_1 y}dy}-1\Big]-s\lambda_1 + r_2(b-1)-r_2\expl\right)\\
&\leq &-r_2 e^{2\lambda_1 z}<0.
\eeaa
Hence (\ref{u2}) holds for all $z\neq z_1$.

{Next, we set
\beaa
\mu_0:=\min\{\ld_2-\ld_1,\ld_1,\rho\},
\eeaa
and let $\mu\in(0,\mu_0)$ be a constant such that
\be\label{mu}
A(\mu):=d_1\Big[\int_\bR{J_1(y)e^{\mu y}dy}-1\Big]-s\mu<0.
\ee
The existence of such $\mu$ follows from the facts $A(0)=0$ and $A'(0)=-s<0$.

Now note that $\uphi(z)=1-\eta e^{\mu z}$ for $z\le z_2=z_2(\eta)$, where $z_2$ is defined by $1=\eta e^{\mu z_2}$.
Since $z_2(\eta)\searrow -\infty$ as $\eta\to\infty$, we can choose $\eta$ large enough so that $z_2(\eta)\le z_1$ and
\beaa
\alpha(-z)\ge \alpha(\infty)-Ce^{\rho z}=1-Ce^{\rho z},\;\forall\, z\le z_2,
\eeaa
using the condition ($\alpha 3$). Then we check that \eqref{l1} holds for $z \neq z_2$. First, for $z<z_2<0$, we have
\beaa
&&d_1\mathcal{N}_1[\uphi](z)-s\uphi'(z) + r_1\uphi(z)[\alpha (-z) - a \opsi (z) -\uphi(z)]\\
&\ge&-\eta e^{\mu z}A(\mu) + r_1\uphi(z)[\eta e^{\mu z}-Ce^{\rho z}-ae^{\lambda_1 z}]\\
&\ge& e^{\mu z}\left\{-\eta A(\mu) + r_1\uphi(z)[\eta -Ce^{(\rho-\mu) z}-ae^{(\lambda_1-\mu) z}]\right\}>0,
\eeaa
using \eqref{mu}, $\mu<\mu_0$, and by choosing $\eta$ larger if necessary. Moreover it is straightforward that \eqref{l1} also holds for $z>z_2$.}

Lastly, with some suitable choice of $k$, we show that (\ref{l2}) holds for all $z\neq z_3$, where $z_3$ is defined by $e^{\mu z_3}=1/k$. In particular,
$$\upsi(z)=\begin{cases}
e^{\lambda_1 z}-k e^{(\ld_1+\mu) z},\;\;&z<z_3,\\
0, &z\ge z_3.
\end{cases}$$
We only need to deal with the case $z<z_3$, and choose $k\ge\eta$ such that
\be\label{klarge}
k\ge \frac{r_2(b\eta+1)}{-\Delta(\ld_1+\mu,s)},
\ee
which is well-defined thanks to $\Delta(\ld_1+\mu,s)<0$, by our choice of $\mu \in (0,\mu_0)$. Also, we have that $z_3 \le z_2$, hence we compute
\beaa
&&d_2\mathcal{N}_2[\upsi](z)-s\upsi'(z)+r_2 \upsi(z)[-1+b\uphi(z)-\upsi(z)]\\
&\ge&\expl\Big(d_2\Big[\int_\bR{J_2(y)e^{\lambda_1 y}dy}-1\Big]-s\lambda_1\Big)\\
&&\; -k e^{(\ld_1+\mu) z}\Big[d_2\Big[\int_\bR{J_2(y)e^{(\ld_1+\mu) y}dy}-1\Big]-s(\ld_1+\mu)\Big]\\
&&\quad +r_2 \upsi(z)[(b-1)-b\eta e^{\mu z}-\upsi(z)]\\
&=&-r_2(b-1)\expl-k e^{(\ld_1+\mu)z}[\Delta(\ld_1+\mu,s)-r_2(b-1)]\\
&&\quad +r_2(b-1) \upsi(z)-r_2 \upsi(z)[b\eta e^{\mu z}+\upsi(z)]\\
&\ge&e^{(\ld_1+\mu)z}\left\{-k\Delta(\ld_1+\mu,s)-r_2[b\eta \upsi(z) e^{-\ld_1 z}+\upsi^2(z) e^{-(\ld_1+\mu)z}]\right\}
\eeaa
for $z<z_3$.
Note that
\beaa
b\eta\upsi(z)e^{-\ld_1 z}+\upsi^2(z) e^{-(\ld_1+\mu)z}\le b\eta+e^{(\ld_1-\mu)z}\le b\eta+1\;\mbox{ for all $z<z_3$,}
\eeaa
using $\upsi(z)\le \expl$, $\mu<\ld_1$ and $z_3<0$.
Hence \eqref{l2} holds for $z<z_3$, due to \eqref{klarge}.
This proves that $(\ophi,\opsi)$ and $(\uphi,\upsi)$ are a pair of upper and lower solutions of \eqref{TWS}.

{\bf Case 2: $s=s_*$.} Motivated by \cite{DGLP19}, we consider
\beaa
\Psi(z):=-Lz e^{\lambda_*z},
\eeaa
where the constant $L\ge (b-1)\ld_* e$ is so that the maximum of the function $\Psi(-1/\ld_*)\ge b-1$. Hence there is $z_1\le -1/\ld_*$ such that $\Psi(z_1)=b-1$ and $0 < \Psi(z)<b-1$ for all $z<z_1$.
Then we define a non-decreasing function $\opsi$ by
\begin{equation}\label{supsub_s=s*0}
\opsi(z)=
\begin{cases}
\Psi(z),\; z<z_1,\\
b-1,\; z\ge z_1.
\end{cases}
\end{equation}
Also, we consider the functions
\begin{equation}\label{supsub_s=s*}
\left\{
\begin{array}{ll}
\ophi(z)\equiv1,& \vspace{3pt}\\
 \uphi (z) = \max\{0,1-\eta e^{\mu z}\} ,& \upsi(z) =\max\{0, [- Lz -q\sqrt{-z}]e^{\ld_* z}\}.
\end{array}
\right.
\end{equation}
We claim that $(\ophi,\opsi)$ and $(\uphi,\upsi)$ are a pair of upper and lower solutions of \eqref{TWS} with $s =s_*$ for some suitably chosen positive constants $\eta$ and $q$. For the reader's convenience, we provide below the detailed verifications.

As before, it is clear that \eqref{u1} holds for all $z\in\bR$. For \eqref{u2}, it suffices to consider the case when $z < z_1$. Recall that $\ld_*$ is a double root of $\Delta(\ld,s_*)=0$. This implies that
\be\label{crit}
\Delta(\ld_*,s_*)=0,\quad d_2\int_\bR{J_2(y)ye^{\lambda_*y}dy}=-d_2\int_\bR{J_2(y)ye^{-\lambda_*y}dy}=s_*.
\ee
Since $J_2$ is compactly supported, there exists $\tau>0$ such that $J_2(z)=0$ for $|z|\ge \tau$. Up to increasing $L$ without loss of generality, from now on we assume that $\hat{z}_1-z_1>\tau$, where $\Psi(z_1)=\Psi(\hat{z}_1)=b-1$.
Then, for any $z < z_1$ and $y \in [-\tau, \tau]$, we have $z-y<\hat{z}_1$ and so $\opsi(z-y)\le\Psi(z-y)$. It follows that
\beaa
&&\int_\bR{J_2(y)\opsi(z-y)dy}=\int_{-\tau}^\tau J_2(y)\opsi(z-y)dy\\
&\le& \int_{-\tau}^\tau {J_2(y)\Psi(z-y)dy}=\int_\bR{J_2(y)\{-L (z-y) e^{\lambda_*(z-y)}\}dy},\; \forall\, z<z_1.
\eeaa
With this estimate and using \eqref{crit}, we compute
\beaa
&&d_2\mathcal{N}_2[\opsi](z)-s_*\opsi'(z)+r_2\opsi(z)[-1+b\ophi(z)-\opsi(z)]\\
&\le& d_2\left[\int_\bR{J_2(y)\{-L (z-y) e^{\lambda_*(z-y)}\}dy}-(-Lz e^{\lambda_*z})\right]\\
&&\quad-s_*(-Le^{\lambda_*z}-\lambda_*Lze^{\lambda_*z})+r_2(-Lze^{\lambda_*z})(-1+b+Lze^{\lambda_*z}))\\
&=&-Lze^{\lambda_*z}\Big\{d_2\Big[\int_\bR{J_2(y)e^{-\lambda_*y}dy}-1\Big]-s_*\lambda_*+r_2(b-1)\Big\}\\
&&\quad+Le^{\lambda_*z}\Big(d_2\int_\bR{J_2(y)ye^{-\lambda_*y}dy}+s_* - r_2Lz^2e^{\lambda_*z}\Big)\\
&=&-r_2L^2z^2e^{2\lambda_*z}\le0
\eeaa
for $z<z_1$. Hence \eqref{u2} holds for all $z\neq z_1$.

{Next, \eqref{l1} can be obtained by taking $0<\mu<\min\{\rho,\lambda_*/2\}$ and $\eta$ large enough. The computation is the same as in the case $s > s_*$ and therefore we omit the details. For later use, we will also assume that $\eta$ is large enough so that
\beaa
z_2=-\ln(\eta)/\mu<-1/(\ld_*-\tl),\quad  -Lz_2 e^{(\lambda_*-\tl)z_2}<\eta ,
\eeaa
where $\tl\in(\lambda_*-\mu,\lambda_*)$. Thus, in particular, we have
\be\label{tleq}
-Lze^{\lambda_*z}< \eta e^{\tl z} < \eta e^{\mu z}\quad\mbox{ for $z\le z_2$.}
\ee

Lastly,} for \eqref{l2}, we set $z_3:=-(q/L)^2$. Then the function $(-Lz-q\sqrt{-z})e^{\lambda_*z}$ is positive for $z\in(-\infty,z_3)$ and has a unique maximal point in $(-\infty,z_3)$.
Note that $\upsi(z)$ can be written as
$$
\upsi(z)=\begin{cases}
(-Lz -q\sqrt{-z})e^{\ld_* z},\; z<z_3,\\
0,\; z\ge z_3.
\end{cases}
$$
With the above chosen constants $L,\mu,\tl,\eta$, we claim that \eqref{l2} holds for $z<z_3$ for a suitable large $q>L\sqrt{\ln(\eta)/\mu}$. Notice that this latest inequality implies that $z_3<z_2$.

Then, given $z<z_3$, it follows from~\eqref{tleq} and~$\upsi(z)\le -Lze^{\lambda_*z}$ that
\begin{align*}
r_2\upsi(z)[-1+b\uphi(z)-\upsi(z)]& =  r_2\upsi(z)[-1+b(1-\eta e^{\mu z}) -\upsi(z)]\\
&\ge r_2(b-1)\upsi(z)-r_2b\eta^2 e^{(\mu+\tl)z} - r_2\eta^2e^{2\tl z}.\\.
\end{align*}
Since $\upsi(z)\ge (-Lz-q\sqrt{-z})e^{\lambda_*z}$ in $\R$, using \eqref{crit} we get
\beaa
&&d_2\mathcal{N}_2[\upsi](z)-s_*\upsi'(z)+r_2 \upsi(z)[-1+b\uphi(z)-\upsi(z)]\\
&\ge& d_2\Big[\int_\bR{J_2(y)\{-L(z-y)-q\sqrt{-(z-y)}\}e^{\lambda_*(z-y)}dy}-(-Lz-q\sqrt{-z} ) e^{\lambda_* z} \Big]\\
&&\quad -s_*\Big[\Big(-L+\frac{q}{2\sqrt{-z}}\Big)e^{\lambda_*z}+\lambda_*(-Lz-q\sqrt{-z})e^{\lambda_* z}\Big]\\
&&\qquad +r_2(b-1)(-Lz-q\sqrt{-z})e^{\lambda_*z}-r_2 b \eta^2 e^{(\mu +\tl) z} - r_2 \eta^2 e^{2 \tl z} \\
&=&-Lze^{\lambda_*z}\Big\{d_2\Big[\int_\bR{J_2(y)e^{-\lambda_*y}dy}-1\Big]-s_*\lambda_*+r_2(b-1)\Big\}+ Le^{\lambda_*z}\Big(d_2\int_\bR{J_2(y)ye^{-\lambda_*y}dy}+s_*\Big)\\
&&\quad + e^{\lambda_*z}\Big\{d_2\Big[\int_\bR{J_2(y)(-q\sqrt{-(z-y)})e^{-\lambda_*y}dy}+q\sqrt{-z}\Big]+s_*\Big[q\lambda_*\sqrt{-z}-\frac{q}{2\sqrt{-z}}\Big]\Big\}\\
&&\qquad+ e^{\lambda_*z}\Big\{-r_2q(b-1)\sqrt{-z}-r_2b\eta^2 e^{(\mu+\tl-\lambda_*)z}-r_2\eta^2 e^{(2\tl-\lambda_*)z} \Big\}\\
&=&e^{\lambda_* z}[q I_1(z)-I_2(z)],
\eeaa
for $z < z_3$, where
\beaa
&&I_1(z):=-d_2\Big[\int_\bR{J_2(y)\sqrt{-(z-y)}e^{-\lambda_*y}dy}-\sqrt{-z}\Big]+s_*\Big(\lambda_*\sqrt{-z}-\frac{1}{2\sqrt{-z}}\Big)-r_2(b-1)\sqrt{-z},\\
&&I_2(z):=r_2b\eta^2 e^{(\mu+\tl-\lambda_*)z}+r_2\eta^2 e^{(2\tl-\lambda_*)z}.
\eeaa

To proceed further, from \eqref{crit} we first write
\beaa
I_1(z)&=&-d_2\int_\bR{J_2(y)(\sqrt{-(z-y)}-\sqrt{-z})e^{-\lambda_*y}dy}-\frac{s_*}{2\sqrt{-z}}\\
&=&-d_2\int_\bR{J_2(y)(\sqrt{-(z-y)}-\sqrt{-z})e^{-\lambda_*y}dy}+\frac{d_2}{2\sqrt{-z}}\int_\bR{J_2(y)ye^{-\lambda_*y}dy}\\
&=&d_2\Big[\int_\bR{J_2(y)\Big(-\sqrt{-(z-y)}+\sqrt{-z}+\frac{y}{2\sqrt{-z}}\Big)e^{-\lambda_* y}dy}\Big].
\eeaa
It follows from standard real analysis (see also the proof of~\cite[Theorem 3.4]{DGLP19}) that
$$-\sqrt{-(z-y)}+\sqrt{-z}+\frac{y}{2\sqrt{-z}}\ge \frac{y^2}{8(-z+\tau)^{3/2}}\quad\mbox{for $|y|<\tau$}.$$
Thus, we obtain
$$I_1(z)\ge \frac{d_2}{8(-z+\tau)^{3/2}}\int_\bR{J_2(y)y^2 e^{-\lambda_*y}dy}\quad\mbox{for $z<z_3$}.$$

Since $\mu+\tl>\lambda_*$ and $2\tl-\lambda_*>2(\lambda_*-\mu)-\lambda_*=\lambda_*-2\mu>0$, from the choices of $\mu$ and~$\tl$, the quantity
$$Q:=\frac{\max_{z<0}\{8(-z+\tau)^{3/2}I_2(z)\}}{d_2\int_\bR{J_2(y)y^2e^{-\lambda_*y}dy}}$$
is a well-defined finite number. Then, by choosing $q\ge \max\{Q,L\sqrt{\ln(\eta)/\mu}\}$, we have
$$d_2\mathcal{N}_2[\upsi](z)-s_*\upsi'(z)+r_2 \upsi(z)[-1+b\uphi(z)-\upsi(z)]\ge e^{\lambda_* z}[q I_1(z)-I_2(z)]\ge0$$
for $z<z_3$. Hence \eqref{l2} holds for all $z\neq z_3$.
We conclude that $(\ophi,\opsi)$ and $(\uphi,\upsi)$ are a pair of upper and lower solutions of \eqref{TWS}.\medskip

{Finally, {by applying Lemma~\ref{luslem},} we have proved the following:
\begin{lemma}\label{existence_mixed}
Suppose that $b>1$ and $s \geq s_*$. Then there exists a solution $(\phi,\psi)$ of~\eqref{TWS} such that $0\le\uphi\le\phi\le 1$ and $0\le\upsi\leq \psi\leq b-1$ in $\bR$, where $\uphi$ and $\upsi$ are defined by either \eqref{supsub_s>s*} or \eqref{supsub_s=s*0}-\eqref{supsub_s=s*}, depending on $s >s_*$ or $s = s_*$.
\end{lemma}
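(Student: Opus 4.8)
The strategy is simply to invoke Lemma~\ref{luslem}, since the nontrivial work has in effect already been done: in Case~1 ($s>s_*$) and in Case~2 ($s=s_*$) above we have exhibited, for each $s\ge s_*$, continuous functions $(\ophi,\opsi)$ and $(\uphi,\upsi)$ --- given by \eqref{supsub_s>s*}, respectively by \eqref{supsub_s=s*0}--\eqref{supsub_s=s*} --- that satisfy the differential inequalities \eqref{u1}--\eqref{l2} off a finite set of gluing points. What remains is to record the ordering hypotheses required by Lemma~\ref{luslem} and then to quote it.

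First I would check that $0\le\uphi\le\ophi\le1$ and $0\le\upsi\le\opsi\le b-1$ hold pointwise on $\bR$ in both cases. The relations $\ophi\equiv1$, $0\le\uphi=\max\{0,1-\eta e^{\mu z}\}\le1$, $\upsi\ge0$, and $\opsi\le b-1$ are immediate from the definitions. The one point worth spelling out is $\upsi\le\opsi$. In Case~1, $\upsi$ is supported in $\{z<z_3\}$ with $z_3\le z_2\le z_1$, and there $\upsi(z)=e^{\lambda_1 z}(1-ke^{\mu z})\le e^{\lambda_1 z}$; since also $z<z_1$ we get $\upsi(z)\le e^{\lambda_1 z}\le e^{\lambda_1 z_1}=b-1$, hence $\upsi\le\min\{b-1,e^{\lambda_1 z}\}=\opsi$. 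In Case~2, $\upsi$ is supported in $\{z<z_3\}$ with $z_3<z_2<z_1$, where $\opsi(z)=\Psi(z)=-Lze^{\lambda_* z}$, and $\upsi(z)=(-Lz-q\sqrt{-z})e^{\lambda_* z}\le-Lze^{\lambda_* z}=\opsi(z)$ because the extra term $q\sqrt{-z}\,e^{\lambda_* z}$ is nonnegative there; for $z\ge z_3$ one has $\upsi(z)=0\le\opsi(z)$. These estimates only use the nesting of the thresholds $z_1,z_2,z_3$, which was already arranged when choosing $\eta,k,q,L$ in the constructions.

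With these orderings in hand, Lemma~\ref{luslem} applies directly and produces, for every $s\ge s_*$, a solution $(\phi,\psi)$ of \eqref{TWS} with $\uphi\le\phi\le\ophi$ and $\upsi\le\psi\le\opsi$; in particular $0\le\uphi\le\phi\le1$ and $0\le\upsi\le\psi\le b-1$, which is exactly the claim. There is no genuine obstacle left at this stage: the real difficulty --- tuning the exponents $\mu,\lambda_1,\lambda_*,\tl$ and the constants $\eta,k,q,L$ so that the differential inequalities and the orderings hold simultaneously, together with the fine one-sided estimate for $I_1(z)$ in Case~2 --- has been absorbed into the preceding two cases, and here one merely confirms that the hypotheses of Lemma~\ref{luslem} are satisfied.
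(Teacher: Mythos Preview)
Your proof is correct and follows exactly the same approach as the paper: the paper's argument is literally the single line ``by applying Lemma~\ref{luslem}'' once the pairs in Cases~1 and~2 have been shown to be upper and lower solutions, and you do the same, merely adding an explicit verification of the ordering $\upsi\le\opsi$ (which the paper leaves implicit in its assertion that the constructed functions form a pair of upper and lower solutions in the sense of Definition~\ref{lus}). Your check of this ordering is accurate in both cases, relying only on the nesting $z_3\le z_2\le z_1$ that the constructions already arrange.
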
}

\subsection{Limits of wave tails}

{In order to complete the proof of Theorem~\ref{th:forced}, it remains to check the asymptotics of the forced wave profile at $\pm \infty$.} We first claim that
\be\label{r-bc}
(\phi,\psi)(\infty)=(0,0),
\ee
for any nonnegative solution $(\phi,\psi)$ of \eqref{TWS}, including the solutions constructed in Lemmas~\ref{existence} and~\ref{existence_mixed}.

For contradiction, we assume that $\phi^+:=\limsup_{z\to \infty}\phi(z)>0$.
Then there is a maximal sequence $\{z_n\}$ of $\phi$ such that $z_n\to \infty$ and $\phi(z_n)\to \phi^+$ as $n\to\infty$.
It follows from the $\phi$-equation in \eqref{TWS} and $\alpha(-\infty)<0$ that
\beaa
0&=&\limsup_{n\to\infty}\{d_1\mathcal{N}_1[\phi](z_n)+r_1\phi(z_n)[\alpha(-z_n)-\phi(z_n)-a\psi(z_n)]\}\\
&\le&r_1\phi^+[\alpha(-\infty)-\phi^+ - a\liminf_{n\to\infty}\psi(z_n)]<0,
\eeaa
a contradiction. Here the following inequality was used:
\beaa
\limsup_{n\to\infty}\mathcal{N}_1[\phi](z_n)=\limsup_{n\to\infty}\left\{\int_\bR J_1(y)\phi(z_n-y)dy-\phi(z_n)\right\}\le 0,
\eeaa
since (cf. \cite{W})
\beaa
\limsup_{n\to\infty}\left\{\int_\bR J_1(y)\phi(z_n-y)dy\right\}\le\limsup_{z\to \infty}\left\{\int_\bR J_1(y)\phi(z-y)dy\right\}\le \limsup_{z\to \infty}\phi(z).
\eeaa
This proves that $\phi(\infty)=0$.

Similarly, we assume for contradiction that $\psi^+:=\limsup_{z\to \infty}\psi(z)>0$.
Then we have a maximal sequence $\{z_n\}$ of $\psi$ such that $z_n\to \infty$ and $\psi(z_n)\to \psi^+$ as $n\to\infty$.
It follows from the $\psi$-equation in \eqref{TWS} that
\beaa
0&=&\limsup_{n\to\infty}\{d_2\mathcal{N}_2[\psi](z_n)+r_2\psi(z_n)[-1+b\phi(z_n)-\psi(z_n)]\}\\
&\le&r_2\psi^+(-1 - \psi^+)<0,
\eeaa
a contradiction again. Hence $\psi(\infty)=0$ and so we have proved \eqref{r-bc}.\medskip

Next, we claim that {the solution obtained from Lemma~\ref{existence} satisfies $(\phi,\psi)(-\infty)=(u_*,v_*)$. Notice that, since $\phi\ge\uphi$ and $\psi\ge\upsi$, we have
\be\label{phi-p}
\phi^-:=\liminf_{z\to -\infty}\phi(z)\ge \gamma_1,\;\psi^-:=\liminf_{z\to-\infty}\psi(z)\ge \gamma_2,
\ee
where}
\beaa
\gamma_1:=1-a(b-1)>0,\quad \gamma_2:=-1+b[1-a(b-1)]=(b-1)(1-ab)>0.
\eeaa
With \eqref{phi-p} in hand, the following result can be proved by a similar argument as that of \cite{cgy17} {with some modifications.}

\begin{lemma}\label{uvstar}
It holds that $(\phi,\psi)(-\infty)=(u_*,v_*)$ for the solution $(\phi,\psi)$ of \eqref{TWS} obtained from Lemma~\ref{existence}.
\end{lemma}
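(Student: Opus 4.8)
The plan is to control the four limit quantities
\beaa
&&\phi^-:=\liminf_{z\to-\infty}\phi(z),\quad\phi^*:=\limsup_{z\to-\infty}\phi(z),\\
&&\psi^-:=\liminf_{z\to-\infty}\psi(z),\quad\psi^*:=\limsup_{z\to-\infty}\psi(z),
\eeaa
which, by \eqref{phi-p} together with $\phi\le1$ and $\psi\le b-1$, obey $0<\gamma_1\le\phi^-\le\phi^*\le1$ and $0<\gamma_2\le\psi^-\le\psi^*\le b-1$, and to show that in fact $\phi^-=\phi^*=u_*$ and $\psi^-=\psi^*=v_*$. Since $(\phi,\psi)$ is bounded, the nonlocal terms $\mathcal{N}_i[\phi],\mathcal{N}_i[\psi]$ are bounded, so by \eqref{TWS} the derivatives $\phi',\psi'$ are bounded; hence $(\phi,\psi)$ is uniformly Lipschitz on $\bR$, which is what makes a translation--compactness argument work (the solution from Lemma~\ref{existence} is not known to be monotone, so one cannot simply take a monotone limit at $-\infty$).

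Given any sequence $z_n\to-\infty$, set $(\phi_n,\psi_n):=(\phi(\cdot+z_n),\psi(\cdot+z_n))$. By the uniform Lipschitz bound and the Arzel\`a--Ascoli theorem, along a common subsequence $(\phi_n,\psi_n)\to(\phi_\infty,\psi_\infty)$ locally uniformly on $\bR$. The convolution terms pass to the limit (the kernels $J_i$ are fixed, compactly supported probability densities), $\alpha(-z-z_n)\to\alpha(\infty)=1$ locally uniformly in $z$ by the monotonicity of $\alpha$, and reading $\phi_n',\psi_n'$ off \eqref{TWS} shows that these converge locally uniformly as well; thus $(\phi_\infty,\psi_\infty)$ is a bounded entire solution of the homogenized system
\beaa
&&s\phi_\infty'=d_1\mathcal{N}_1[\phi_\infty]+r_1\phi_\infty[1-\phi_\infty-a\psi_\infty],\\
&&s\psi_\infty'=d_2\mathcal{N}_2[\psi_\infty]+r_2\psi_\infty[-1+b\phi_\infty-\psi_\infty],
\eeaa
and it inherits the bounds $\gamma_1\le\phi_\infty\le1$, $\gamma_2\le\psi_\infty\le b-1$, the lower ones from $\phi\ge\uphi$, $\psi\ge\upsi$ and \eqref{uphi_lim}--\eqref{upsi_lim}.

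The core step produces four inequalities. If $z_n$ is chosen with $\phi(z_n)\to\phi^*$, then $\phi_\infty(z)=\lim\phi(z+z_n)\le\phi^*=\phi_\infty(0)$, so $0$ is a global maximum of $\phi_\infty$; hence $\phi_\infty'(0)=0$ and $\mathcal{N}_1[\phi_\infty](0)\le0$, and evaluating the first equation of the homogenized system at $0$ and dividing by $r_1\phi_\infty(0)\ge r_1\gamma_1>0$ gives $\phi^*\le1-a\psi_\infty(0)\le1-a\psi^-$. Running the same argument at a sequence realizing $\phi^-$ (now a global minimum of $\phi_\infty$, so $\mathcal{N}_1[\phi_\infty](0)\ge0$), and at sequences realizing $\psi^*$ and $\psi^-$ in the second equation, yields
\beaa
&&\phi^*\le1-a\psi^-,\quad\phi^-\ge1-a\psi^*,\quad\psi^*\le b\phi^*-1,\quad\psi^-\ge b\phi^--1.
\eeaa
Bounding $\phi^*$ below by the third inequality, above by the first, and $\psi^-$ below by the second and fourth, one obtains after an elementary computation (using $ab<1$) that $\psi^*(1-a^2b^2)\le(b-1)(1-ab)$, hence $\psi^*\le(b-1)/(1+ab)=v_*$; feeding this back, $\psi^-\ge(b-1)-ab\,\psi^*\ge v_*$, so $\psi^-=\psi^*=v_*$. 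Then $1-a\psi^-=1-a\psi^*=1-av_*=u_*$, so $\phi^-\ge1-a\psi^*=u_*$ and $\phi^*\le1-a\psi^-=u_*$, whence $\phi^-=\phi^*=u_*$, completing the proof that $(\phi,\psi)(-\infty)=(u_*,v_*)$.

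I expect the main obstacle to be the limit-passing of the second paragraph: one must extract a single subsequence along which \emph{both} components and their derivatives converge locally uniformly, justify the convergence of the nonlocal terms so that $(\phi_\infty,\psi_\infty)$ genuinely solves the homogenized system on the whole line, and make sure that $0$ is a true global extremum of $\phi_\infty$ (resp. $\psi_\infty$) so that the sign of $\mathcal{N}_i$ at $0$ may be used in the extremal-point step. These are precisely the points where the scheme of \cite{cgy17} needs to be adapted to the present, non-monotone, nonlocal setting.
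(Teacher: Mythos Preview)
Your proof is correct and takes a genuinely different route from the paper's. The paper follows the squeezing (or ``sliding $\theta$'') method of \cite{cgy17}: it introduces affine functions $m_i(\theta),M_i(\theta)$ interpolating between the crude a~priori bounds and $(u_*,v_*)$, defines $\theta_0$ as the largest parameter for which the strict inequalities \eqref{ineq1} hold, and then rules out each of the four boundary cases $\phi^-=m_1(\theta_0)$, $\phi^+=M_1(\theta_0)$, $\psi^-=m_2(\theta_0)$, $\psi^+=M_2(\theta_0)$ by choosing a sequence $z_n\to-\infty$ with $\phi'(z_n)\to0$ (from eventual monotonicity or from local extrema) and applying a Fatou--type estimate on $\mathcal{N}_i$. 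By contrast, you first use the boundedness of $(\phi,\psi)$ and of the right-hand side of \eqref{TWS} to get a uniform Lipschitz bound, pass via Arzel\`a--Ascoli to an entire solution $(\phi_\infty,\psi_\infty)$ of the homogenized system with $\alpha\equiv1$, and then evaluate that system at the genuine global extremum $z=0$ to obtain directly the four inequalities $\phi^*\le1-a\psi^-$, $\phi^-\ge1-a\psi^*$, $\psi^*\le b\phi^*-1$, $\psi^-\ge b\phi^--1$, from which the conclusion follows by elementary algebra. Your approach avoids the auxiliary parameters $k_1,k_2,\ep$ and the case analysis, and makes transparent why the condition $ab<1$ closes the chain of inequalities; the paper's approach, on the other hand, stays with the original (heterogeneous) equation throughout and does not require the compactness/limit-passing step you flag in your last paragraph. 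Both arguments ultimately exploit the same mechanism---vanishing derivative and a sign on $\mathcal{N}_i$ at extremal sequences---but package it differently.
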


\begin{proof}
Consider the following functions
\begin{align*}
&m_1(\theta):=\theta u_*+(1-\theta)(\gamma_1-\ep),\; M_1(\theta):=\theta u_*+(1-\theta)(1+\ep),\quad\theta\in[0,1],\\
&m_2(\theta):=\theta v_*+(1-\theta)(\gamma_2-k_1\ep),\; M_2(\theta):=\theta v_*+(1-\theta)(b-1+k_2\ep),\quad\theta\in[0,1],
\end{align*}
where {$k_1:={2}/{a}$}, $k_2:=(b+1/a)/2$ and $\ep$ satisfies
\be\label{ep-p}
0<\ep<\min\left\{\gamma_1,\frac{a \gamma_2}{2}\right\}.
\ee
Note that {$k_1 > 2b$ and} $k_2\in(b,1/a)$, where $1<b<1/a$ due to \eqref{ab2}.

By \eqref{phi-p} and Lemma~\ref{existence}, it is obvious that
\be\label{ineq1}
m_1(\theta)<\phim\le\phip<M_1(\theta),\;\;m_2(\theta)<\psim\le\psip<M_2(\theta)
\ee
holds for $\theta=0$.
Hence the quantity
$$\theta_0:=\sup\{\theta\in[0,1):(\ref{ineq1})\mbox{ holds}\} \in (0,1].$$ is well-defined.
Since $0<\gamma_1<u_*<1$ and $0<\gamma_2<v_*<b-1$, the function $m_i(\theta)$ (resp. $M_i(\theta)$) is increasing {(resp. decreasing)} in $\theta\in[0,1]$, $i=1,2$.
Moreover, $m_1(1)=M_1(1)=u_*$ and $m_2(1)=M_2(1)=v_*$. Hence the lemma follows if we can show that $\theta_0=1$.

For contradiction, we suppose that $\theta_0<1$. Then, by passing {to the limit as $\theta \to \theta_0$ in~\eqref{ineq1}}, we obtain
$$m_1(\theta_0)\le\phim\le\phip\le M_1(\theta_0),\;\;m_2(\theta_0)\le\psim\le\psip\le M_2(\theta_0).$$
By the definition of $\theta_0$ and the continuity of $m_i(\theta)$ and $M_i(\theta)$, $i=1,2$, inequalities~\eqref{ineq1} cannot hold for $\theta=\theta_0$.
This means that at least one of the following equalities holds:
\be\label{cases}
\phim=m_1(\theta_0),\;\;\phip=M_1(\theta_0),\;\;\psim=m_2(\theta_0),\;\;\psip=M_2(\theta_0).
\ee

First, we assume that $\phim=m_1(\theta_0)$. If $\phi$ is eventually monotone, then $\phi(-\infty)$ exists. Furthermore, $\liminf_{z\to-\infty}\phi'(z)=0$ or $\limsup_{z\to-\infty}\phi'(z)=0$.
Then we can find a sequence $\{z_n\}$ with $z_n\to-\infty$ as $n\to\infty$ such that $\lim_{n\to\infty}\phi'(z_n)=0$ and $\lim_{n\to\infty}\phi(z_n)=m_1(\theta_0)$.
Since $\limsup_{n\to\infty}\psi(z_n)\le M_2(\theta_0)$, we have
\begin{align*}
&\liminf_{n\rightarrow\infty}[\alpha(-z_n)-\phi(z_n)-a\psi(z_n)]\\&\ge 1- [\theta_0u_*+(1-\theta_0)(\gamma_1-\ep)] -a[\theta_0v_*+(1-\theta_0)(b-1+k_2\ep)]\\
&=\ep(1-ak_2)(1-\theta_0)>0.
\end{align*}
The last inequality holds by the choice of {$k_2$ and $\ep$}. On the other hand, by Fatou's lemma we have
\begin{align*}
\liminf_{n\to\infty}\mathcal{N}_1[\phi](z_n)
&=\liminf_{n\rightarrow\infty}\left[\int_\bR{J_1(y)\phi(z_n-y)dy}-\phi(z_n)\right]\\&\ge \liminf_{n\rightarrow\infty}\int_\bR{J_1(y)\phi(z_n-y)dy} + \liminf_{n\rightarrow\infty}(-\phi(z_n))\\
&\ge \int_\bR{m_1(\theta_0)J_1(y)}-m_1(\theta_0)=0.
\end{align*}
Then, from the first equation of \eqref{TWS}, we obtain
\begin{align*}
0=\liminf_{n\to\infty}s\phi'(z_n)&\ge\liminf_{n\to\infty}d_1\mathcal{N}_1[\phi](z_n)+\liminf_{n\to\infty}\{r_1\phi(z_n)[\alpha(-z_n)-\phi(z_n)-a\psi(z_n)]\}>0,
\end{align*}
which is a contradiction.

Next, we assume that $\phi$ is oscillatory at $-\infty$. Then, we can choose a sequence $\{z_n\}$ of local minimum points of $\phi$ with $z_n\to-\infty$ as $n\to\infty$ such that $\lim_{n\to\infty}\phi(z_n)=m(\theta_0)$.
In particular, $\phi'(z_n)=0$ for all $n$. Similarly as above, we reach the same contradiction
$$0=\liminf_{n\to\infty}s\phi'(z_n)\ge\liminf_{n\to\infty}d_1\mathcal{N}_1[\phi](z_n)+ \liminf_{n\to\infty}\{r_1\phi(z_n)[\alpha(-z_n)-\phi(z_n)-a\psi(z_n)]\}>0 .$$
Hence $\phim=m(\theta_0)$ cannot happen.

The other cases in (\ref{cases}) can be treated similarly, using
\beaa
\limsup_{n\to\infty}\int_\bR{J_i(y)f(z_n-y)dy}\leq \int_\bR J_i(y)\{\limsup_{n\to\infty}f(z_n-y)\}dy
\eeaa
for any bounded continuous function $f$ in $\bR$ and the following inequalities:

i) for $\phi^+=M_1(\theta_0)$,
\begin{align*}
&\limsup_{n\rightarrow\infty}[\alpha(-z_n)-\phi(z_n)-a\psi(z_n)]\\&\le 1- [\theta_0u_*+(1-\theta_0)(1+\ep)] -a[\theta_0v_*+(1-\theta_0)(\gamma_2-k_1\ep)]\\
&=(1-\theta_0)[(ak_1-1)\ep- a\gamma_2]<0;
\end{align*}

ii) for $\psi^-=m_2(\theta_0)$,
\begin{align*}
&\liminf_{n\rightarrow\infty}[-1+b\phi(z_n) - \psi(z_n)]\\&\ge -1+ b [\theta_0u_*+(1-\theta_0)(\gamma_1-\ep)] -[\theta_0v_*+(1-\theta_0)(\gamma_2-k_1\ep)]\\
&=\ep(k_1-b)(1-\theta_0)>0;
\end{align*}

iii) for $\psi^+=M_2(\theta_0)$,
\begin{align*}
&\limsup_{n\rightarrow\infty}[-1+b\phi(z_n) - \psi(z_n)]\\&\le -1+ b [\theta_0u_*+(1-\theta_0)(1+\ep)] -[\theta_0v_*+(1-\theta_0)(b-1+k_2\ep)]\\
&=\ep(b-k_2)(1-\theta_0)<0.
\end{align*}
\noindent
The lemma is proved.
\end{proof}

Therefore, Theorem~\ref{th:forced} follows from Lemma~\ref{existence}, \eqref{r-bc} and Lemma~\ref{uvstar}. Moreover, for the mixed front-pulse type waves, from the constructed upper-lower solutions it is clear that $(\phi,\psi)(-\infty)=(1,0)$; see \eqref{supsub_s>s*} and \eqref{supsub_s=s*0}-\eqref{supsub_s=s*}. Also, by the strong maximum principle, $\psi(z)>0$ for all $z\in\bR$, since $\psi\ge\upsi\ge 0$ and $\upsi\not\equiv 0$.
Hence the existence part of Theorem~\ref{th:forced2} follows from Lemma~\ref{existence_mixed} and \eqref{r-bc}.

\subsection{Non-existence of mixed front-pulse type forced waves}

For the non-existence of mixed type forced waves, we actually have the following result.
\begin{theorem}
Assume that $s < s_*$. Then \eqref{TWS} does not have any positive solution satisfying $(\phi,\psi)(-\infty)=(1,0)$.
\end{theorem}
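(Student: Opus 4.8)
The plan is to argue by contradiction: suppose $(\phi,\psi)$ is a positive solution of \eqref{TWS} with $(\phi,\psi)(-\infty)=(1,0)$ and $s<s_*$, and derive a contradiction from the exponential-decay obstruction at $z\to-\infty$. Since $\phi(-\infty)=1$ and (by the argument already given for \eqref{r-bc}) $\psi(+\infty)=0$, while $\psi$ is positive everywhere, the crucial point is the behaviour of $\psi$ as $z\to-\infty$. Because $\phi(z)\to1$, for any small $\delta>0$ we have $b\phi(z)-1\ge (b-1)-\delta$ for all $z$ below some $z_\delta$, so that on $(-\infty,z_\delta)$ the $\psi$-equation satisfies the differential inequality
\be\label{prop:nonexist:sub}
s\psi'(z)\ge d_2\mathcal{N}_2[\psi](z)+r_2\big[(b-1)-\delta-\psi(z)\big]\psi(z).
\ee
Thus $\psi$ is a positive subsolution, on a half-line, of the nonlocal Fisher--KPP type equation with linear growth rate $r_2[(b-1)-\delta]$ and speed $s$.

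First I would record the standard spectral fact that, since $s<s_*$ (and hence $s<s_*-O(\delta)$ for $\delta$ small, where $s_*$ is defined in \eqref{maxspeed_predator}), the function
$$\Delta_\delta(\lambda,s):=d_2\Big[\int_\bR J_2(y)e^{\lambda y}\,dy-1\Big]+r_2\big[(b-1)-\delta\big]-s\lambda$$
is \emph{strictly positive for all $\lambda\ge 0$}; this is exactly property (iii) of $\Delta(\lambda,s)$ stated in the excerpt, applied after absorbing the $\delta$-perturbation. The heart of the proof is then a Harnack/sliding argument showing that a positive subsolution of \eqref{prop:nonexist:sub} on a left half-line cannot exist when no real exponential rate is available, i.e. forcing $\psi$ to decay to $0$ as $z\to-\infty$ contradicts $\psi>0$. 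Concretely I would proceed as follows.

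\textbf{Step 1: $\psi(-\infty)=0$.} Arguing as in the proof of \eqref{r-bc} but at $-\infty$: if $\psi^-:=\liminf_{z\to-\infty}\psi(z)>0$, pick a minimizing sequence $z_n\to-\infty$; since $J_2$ is even and $\psi$ is bounded, $\liminf_n \mathcal{N}_2[\psi](z_n)\ge 0$, and since $s\psi'(z_n)\to 0$ along a suitable subsequence (using that $\psi$ is bounded with bounded derivative from the equation), one gets $0\ge r_2[(b-1)-\psi^-]\psi^-$ only if $\psi^-\ge b-1$; but combined with $\psi\le b-1$ and the fact that at such a sequence $\phi(z_n)\to 1$, a standard strong-maximum-principle / sliding argument forces $\psi\equiv b-1$, contradicting $\psi(+\infty)=0$. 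Hence $\psi(-\infty)=0$.

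\textbf{Step 2: construction of an exponential supersolution and comparison.} With $\psi(-\infty)=0$ in hand, on $(-\infty,z_\delta)$ the nonlinear term is dominated: $r_2[(b-1)-\delta-\psi]\psi\le r_2[(b-1)-\delta]\psi$, so $\psi$ is a subsolution of the \emph{linear} nonlocal equation $sW'=d_2\mathcal{N}_2[W]+r_2[(b-1)-\delta]W$ there. Since $\Delta_\delta(\lambda,s)>0$ for every $\lambda\ge0$, I would try the ansatz $W_\lambda(z):=Me^{\lambda z}$ and check that it fails to dominate $\psi$ for any choice — more precisely, I would instead use the \emph{generalized principal eigenvalue} of the operator $L W:=d_2\mathcal{N}_2[W]+r_2[(b-1)-\delta]W-sW'$ on a truncated interval $(-R,z_\delta)$ with Dirichlet-type exterior conditions, whose principal eigenvalue $\mu_R$ increases to a positive limit (again because $\min_{\lambda}\Delta_\delta(\lambda,s)>0$) as $R\to\infty$. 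A positive bounded subsolution that decays at $-\infty$ then forces, via the comparison principle on $(-R,z_\delta)$ after normalizing by the principal eigenfunction, $\psi\equiv 0$ on a left half-line, hence $\psi\equiv0$ everywhere by the strong maximum principle — contradicting positivity. (Equivalently and perhaps more simply: if $\min_{\lambda\ge0}\Delta_\delta(\lambda,s)=:c_0>0$, test the equation against $e^{\lambda z}$ integrated over $(-\infty,z_\delta)$ with $\lambda$ the minimizer; the nonlocal term is handled by Fubini and evenness of $J_2$, yielding $\int \psi e^{\lambda z}\,\Delta_\delta(\lambda,s)\le$ boundary terms that vanish as one pushes to $-\infty$, so $c_0\int_{-\infty}^{z_\delta}\psi e^{\lambda z}\le 0$, forcing $\psi\equiv0$ there.)

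The main obstacle I anticipate is Step 2: making the comparison rigorous in the \emph{nonlocal} setting, where one cannot simply integrate by parts and where exponential super/subsolutions on a half-line interact with the values of $\psi$ outside the half-line through the convolution. The cleanest route is likely the integral-against-$e^{\lambda z}$ computation, but one must be careful that $\int_{-\infty}^{z_\delta}\psi(z)e^{\lambda z}\,dz$ is finite (which follows from $\psi$ bounded and $\lambda>0$) and that the boundary contributions from $\mathcal N_2$, after splitting the convolution into the parts landing inside and outside $(-\infty,z_\delta)$ and using that $J_2$ is compactly supported, genuinely have a sign or vanish in the limit. I would also double-check that the $\delta$-perturbation is harmless: since $s<s_*$ strictly and $\lambda\mapsto\Delta(\lambda,s)$ attains a positive minimum, continuity in $\delta$ gives $c_0>0$ for all small $\delta$, which is all that is needed.
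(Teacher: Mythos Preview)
Your approach has a genuine gap, and it differs substantially from the paper's much shorter argument.

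First, Step~1 is redundant: the condition $(\phi,\psi)(-\infty)=(1,0)$ is part of the hypothesis, so $\psi(-\infty)=0$ is assumed, not something to be proved.

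Second, and more importantly, Step~2 does not close. From the inequality you derive in \eqref{prop:nonexist:sub}, namely $s\psi'\ge d_2\mathcal{N}_2[\psi]+r_2[(b-1)-\delta-\psi]\psi$, the chain ``$r_2[(b-1)-\delta-\psi]\psi\le r_2[(b-1)-\delta]\psi$'' goes the wrong way to conclude that $\psi$ is a subsolution of the \emph{linear} equation $sW'=d_2\mathcal{N}_2[W]+r_2[(b-1)-\delta]W$; from $s\psi'\ge(\text{smaller quantity})$ you cannot infer $s\psi'\le(\text{larger quantity})$. Even if you instead use $\phi\le1$ to get $s\psi'\le d_2\mathcal{N}_2[\psi]+r_2(b-1)\psi$ globally, your integral test against $e^{\lambda z}$ on $(-\infty,z_\delta)$ does not yield a contradiction: the boundary contribution at the \emph{finite} endpoint $z_\delta$ (from both the integration by parts of $s\psi'$ and the truncation of the convolution) is of order $e^{\lambda z_\delta}$ and does \emph{not} vanish. ``Pushing to $-\infty$'' only kills the boundary at $-\infty$, and if you instead send $z_\delta\to-\infty$ the integral itself vanishes at the same rate, so nothing is gained. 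The eigenvalue/comparison alternative you sketch suffers from the same issue: a nonlocal Dirichlet problem on $(-R,z_\delta)$ still feels $\psi$ outside the interval through $\mathcal{N}_2$, and you have not explained how to control that.

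The paper's proof avoids all of this by working pointwise with the logarithmic derivative. Setting $\zeta(z):=\psi'(z)/\psi(z)$, the $\psi$-equation becomes
\[
s\zeta(z)=d_2\Big[\int_\bR J_2(y)\,e^{\int_z^{z-y}\zeta(\tau)\,d\tau}\,dy-1\Big]+r_2[-1+b\phi(z)-\psi(z)].
\]
One then invokes \cite[Proposition~3.7]{zlw12} to conclude that the limit $\zeta:=\lim_{z\to-\infty}\zeta(z)$ exists and satisfies $\Delta(\zeta,s)=0$. Since $\Delta(\lambda,s)>0$ for every $\lambda\ge0$ when $s<s_*$, this is a contradiction. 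This is a three-line argument once the cited proposition is available, and it sidesteps all the boundary and sign issues above by directly extracting the exponential rate.
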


\begin{proof}
For contradiction, suppose that there exists a positive solution $(\phi,\psi)$ of \eqref{TWS} such that $(\phi,\psi)(-\infty)=(1,0)$.
Let $\zeta(z)=\psi'(z)/\psi(z)$. From the second equation of \eqref{TWS}, we obtain
\be\label{zeta}
s\zeta(z)=d_2\Big[\int_\bR{J_2(y)e^{\int_{z}^{z-y}{\zeta(\tau)d\tau}}dy}-1\Big]+r_2[-1+b\phi(z)-\psi(z)].
\ee
Then it follows from \cite[Proposition 3.7]{zlw12} that the limit
$\zeta:=\lim_{z\to-\infty}\zeta(z)$ exists and solves
$$s\zeta = d_2\Big[\int_\bR{J_2(y)e^{\zeta y}dy}-1\Big] + r_2(b-1),$$
which means $\Delta(\zeta,s)=0$. However, this contradicts the fact that $\Delta(\lambda,s)>0$ for any $\lambda>0$ when $s < s_*$.
Hence the proof is done.
\end{proof}

We thereby conclude the proof of Theorem~\ref{th:forced2}.

\section{Spreading dynamics: the nonlocal case}
\setcounter{equation}{0}

{In this section, we shall give some results on the persistence and extinction of both species. The proofs are inspired by \cite{lwz18,wwz19}. Recall that we defined}
\be\label{speed}
s^*:=\inf_{\lambda>0}\frac{d_1(\int_\bR{J_1(y)e^{\lambda y}dy}-1)+r_1}{\lambda},\;
s_{*}:=\inf_{\lambda>0}\frac{d_2(\int_\bR{J_2(y)e^{\lambda y}dy}-1)+r_2(b-1)}{\lambda}.
\ee
In the sequel, we let $(u,v)$ be a solution of \eqref{pp} and \eqref{pp-ic} with initial data $(u_0,v_0)\in X_1\times X_{b-1}$, where
\beaa
X_K := \{\varphi \in C^0(\bR) : 0\le \varphi \le K \mbox{ for $x\in\bR$}\}
\eeaa
for a positive constant $K$. Also, let $s>0$ be a given fixed constant.\medskip

\subsection{Extinction}
We first show that any species goes {to extinction uniformly in space as $t\to+\infty$ if it} cannot keep pace with the climate changing speed {even under the most favorable conditions
(i.e., absence of predators for the prey, and abundance of prey for the predator).}

\begin{theorem}\label{sp1}
Assume that {$(u_0,v_0) \in X_1 \times X_{b-1}$ and that} both $u_0$ and $v_0$ have nonempty compact supports. Then
\bea
&&\lim_{t\to\infty}u(x,t)=0 \mbox{ uniformly for $x\in\bR$, if $s>s^*$};\label{u-0}\\
&&\lim_{t\to\infty}v(x,t)=0 \mbox{ uniformly for $x\in\bR$, if $s>s_*$.}\label{v-0}
\eea
\end{theorem}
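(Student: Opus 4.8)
The plan is to combine two ingredients for each species: an exponentially decaying supersolution that travels slightly slower than the climate speed $s$, which forces decay in every moving frame of speed bounded below by some level lagging behind $st$; and a separate argument in the ``back'' region, where the per-capita growth rate is negative, which closes the gap and yields decay that is uniform over the whole line. Throughout I work in the moving coordinate $\xi=x-st$, and let $\tau_i>0$ be such that $J_i$ vanishes outside $[-\tau_i,\tau_i]$. I first record that $0\le u\le 1$ and $0\le v\le b-1$ for all $t\ge 0$: indeed $1$ is a supersolution of the $u$-equation since $\alpha\le 1$ and $v\ge 0$, and then $b-1$ is a supersolution of the $v$-equation since $u\le 1$, so the claim follows from the scalar comparison principle.

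For the prey, assume $s>s^*$. By definition of $s^*$ there is $\lambda>0$ with $\nu:=d_1(\int_\bR J_1(y)e^{\lambda y}dy-1)+r_1<s\lambda$; using $\alpha\le 1$ and $u,v\ge 0$, the function $\overline w(x,t)=Me^{-\lambda(x-ct)}$ with $c:=\nu/\lambda<s$ is a supersolution of the $u$-equation, and since $u_0$ is compactly supported and bounded by $1$ we may choose $M$ so large that $u_0\le\overline w(\cdot,0)$; hence $u\le\overline w$ on $\bR\times[0,\infty)$, so for every fixed $z_0$ one has $\sup_{x-st\ge z_0}u(x,t)\le Me^{-\lambda z_0}e^{-\lambda(s-c)t}=:\rho(t)\to 0$. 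Now fix $z_0$ with $\alpha(z_0)<0$ and set $\delta_0:=-\alpha(z_0)>0$; since $\alpha$ is nondecreasing and $u,v\ge 0$, on $\{\xi\le z_0\}$ we have $u_t\le d_1\mathcal N_1[u]-r_1\delta_0 u$. Given $\varepsilon>0$, for $T$ large the space-homogeneous function $\overline u(t):=e^{-r_1\delta_0(t-T)/2}+\rho(t)+\varepsilon$ satisfies $\overline u'\ge-r_1\delta_0\overline u$ on $[T,\infty)$ (using $\rho'=-\lambda(s-c)\rho$), hence is a supersolution of $\partial_t\phi=d_1\mathcal N_1[\phi]-r_1\delta_0\phi$; since $\overline u(T)\ge 1\ge u(x,T)$ and, on the interface strip $\{z_0\le\xi\le z_0+\tau_1\}$, $u\le\rho(t)\le\overline u$, the comparison principle on $\{\xi\le z_0\}$ gives $u\le\overline u$ there for $t\ge T$. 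Combining the two regions, $\limsup_{t\to\infty}\sup_{x\in\bR}u(x,t)\le\varepsilon$, and letting $\varepsilon\to 0$ yields \eqref{u-0}.

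For the predator, assume $s>s_*$. The point is that we need not have $s>s^*$, so the prey may persist; instead we exploit that far behind the front the prey is so scarce that the predator has negative net growth. First, exactly as above but without using $s>s^*$: fix $z_1$ with $\alpha(z_1)<0$, $\delta_1:=-\alpha(z_1)$, and $\gamma>0$ small enough that $d_1(\int_\bR J_1(y)e^{\gamma y}dy-1)+\gamma s\le r_1\delta_1$; then $\overline u(x,t)=Be^{\gamma(x-st-z_1)}$ is a supersolution of $u_t\le d_1\mathcal N_1[u]-r_1\delta_1 u$ on $\{\xi\le z_1\}$, and for $B$ large $u_0\le\overline u(\cdot,0)$, so by comparison $u(x,t)\le Be^{\gamma(\xi-z_1)}$ whenever $\xi\le z_1$. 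Choosing $z_2\le z_1$ with $Be^{\gamma(z_2-z_1)}\le\frac1{2b}$ gives $u\le\frac1{2b}$, hence $-1+bu-v\le-\frac12$, on $\{\xi\le z_2\}$, so $v_t\le d_2\mathcal N_2[v]-\frac{r_2}{2}v$ there. Meanwhile, by definition of $s_*$ there is $\mu>0$ with $\tilde\nu:=d_2(\int_\bR J_2(y)e^{\mu y}dy-1)+r_2(b-1)<s\mu$, and using $u\le 1$, $v\ge 0$ the function $Ne^{-\mu(x-\tilde ct)}$ with $\tilde c:=\tilde\nu/\mu<s$ is a supersolution of the $v$-equation with $v_0\le$ it for $N$ large, giving $\sup_{\xi\ge z_2}v(x,t)\le Ne^{-\mu z_2}e^{-\mu(s-\tilde c)t}=:\tilde\rho(t)\to 0$. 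Finally, running the same back-region comparison as for the prey (with $-\frac{r_2}{2}$ in place of $-r_1\delta_0$, $b-1$ in place of $1$, $\tilde\rho$ in place of $\rho$, and the strip $\{z_2\le\xi\le z_2+\tau_2\}$) shows $\limsup_{t\to\infty}\sup_{\xi\le z_2}v\le\varepsilon'$; combined with the decay on $\{\xi\ge z_2\}$ and letting $\varepsilon'\to 0$ we obtain \eqref{v-0}.

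The main obstacle is obtaining uniformity in $x$: the exponential supersolutions only control each species ahead of a moving front, so one must separately treat the lagging region, where the nonlocal term couples the two regions across a strip of width equal to the support radius of the kernel — hence the interface bookkeeping above. For the predator there is the extra subtlety that, when $s_*<s\le s^*$, the prey may persist, so the negativity of the predator's growth rate far behind the front must be extracted from the quantitative smallness $u\le\frac1{2b}$ there (a consequence of $\alpha(-\infty)<0$) rather than from extinction of the prey.
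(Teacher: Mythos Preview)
Your proof is correct and self-contained, but the strategy differs markedly from the paper's. The paper does not split space into a front region and a back region; instead, for the prey it simply observes that $u$ is a subsolution of the scalar shifting-habitat equation
\[
z_t=d_1\mathcal N_1[z]+r_1 z\,[\alpha(x-st)+\delta-z],
\]
and quotes the extinction theorem of Li--Wang--Zhao (their \cite{lwz18}, Theorem~3.1), which already delivers uniform decay when $s>s^*(\delta)$. For the predator, rather than proving directly that $u$ is small far behind the front, the paper bounds $u$ above by a shifted forced wave $\psi_\delta(x-st+x_0)$ of the perturbed scalar problem (again from \cite{lwz18}); this puts the $v$-equation under a scalar shifting-habitat supersolution with heterogeneity $b\psi_\delta-1$, and another appeal to \cite{lwz18} finishes.

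What your approach buys is independence from that external reference and from the existence of forced waves: you construct explicit exponential supersolutions traveling at subcritical speed and close the argument in the back region by a spatially constant, temporally decaying barrier, using the nonlocal interface strip to patch the two together. This is genuinely more elementary and essentially reproves the relevant part of \cite{lwz18} in situ. What the paper's approach buys is brevity and modularity: two lines plus a citation. Your handling of the half-line comparison (carrying the boundary data on the strip of width $\tau_i$, and absorbing the $\rho$-term into the barrier so the ODE inequality $\overline u'\ge -r_1\delta_0\overline u$ holds for $t\ge T$) is the correct bookkeeping for nonlocal comparison on a moving domain; it would be worth stating explicitly that the barrier being spatially constant makes the drift term $s\partial_\xi$ in moving coordinates vanish, so the half-line comparison is indeed the standard one with exterior Dirichlet data on the strip.
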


In particular, Theorem~\ref{sp1} includes the first item of Theorem~\ref{spread_fastprey}, and the extinction of~$u$ in the first item of Theorem~\ref{spread_fastpred}. Notice that it also implies that $v$ is driven to extinction when $s= s^* > s_*$, though for the sake of conciseness we omitted it from our main results.

\begin{proof}
{First assume that $s > s^*$. Let $\delta>0$ be arbitrarily small and $z$ be the solution of the initial value problem
\begin{equation}\label{scalar}
\begin{cases}
z_t(x,t)=d_1\mathcal{N}_1[z](x,t)+r_1z(x,t)[{\alpha(x-st)} + \delta -z(x,t)], &\; x\in\bR,\, t>0,\\
z(x,0)=u_0(x)\geq0, &\; x\in\bR.
\end{cases}
\end{equation}From \cite[Theorem 3.1]{lwz18} and $u_0 \leq 1 < 1+\delta$, we have that $\lim_{t\to\infty}z(x,t)=0$ uniformly for $x\in\bR$, if $s>s^* (\delta)$, where
$$ s^* (\delta ):= \inf_{\lambda>0}\frac{d_1 (\int_\bR{J_1(y)e^{\lambda y}dy}-1)+r_1(1 +\delta)}{\lambda} \to s^*\;\mbox{as $\delta \to 0$.}$$
For the sake of completeness, let us point out that this result of~\cite[Theorem 3.1]{lwz18} follows first from a comparison with the solution of the homogeneous equation with $\alpha \equiv \alpha (\infty)$,
which insures that $z$ converges to 0 as $t \to +\infty$ uniformly on the set $\{x \geq \frac{s^* (\delta) + s}{2} t\}$,
and then the uniform convergence on the whole space comes from the fact that $\alpha (x -st) + \delta$ becomes negative on $\{x \leq \frac{s^* (\delta) + s}{2} t\}$.}

{Then, by the comparison principle for the scalar equation, we get that $u(x,t)\leq z(x,t)$ for $x\in\bR,\, t>0$, and \eqref{u-0} follows.}

{Next, consider the case when $s > s_*$. We again pick $\delta>0$ arbitrarily small, and introduce a perturbed problem
\begin{equation}\label{scalar_delta}
\begin{cases}
z_t(x,t)=d_1\mathcal{N}_1[z](x,t)+r_1z(x,t)[{\alpha(x-st)} + \delta -z(x,t)], &\; x\in\bR,\, t>0,\\
z(x,0)=u_0(x)\geq0, &\; x\in\bR.
\end{cases}
\end{equation}
From \cite[Theorem 4.5]{lwz18}, problem~\eqref{scalar_delta} has a forced wave solution $\psi_\delta(x-st)$ such that $\psi_\delta$  is nondecreasing, $\psi_\delta(-\infty)=0$
and $\psi_\delta(\infty)=\alpha(\infty) + \delta =1+ \delta$.}
Since $u_0$ is compactly supported with {$\max_{x\in\bR}u_0(x)\leq1$}, we can choose $x_0>0$ such that $u_0(x)<\psi_\delta(x+x_0)$ for $x\in\bR$.
{Then, since $\alpha$ is nondecreasing, we can easily check that $\overline{u}(x,t):=\psi_\delta(x-st+x_0)$ satisfies
$$\overline{u}_t(x,t)\ge d_1\mathcal{N}_1[\overline{u}](x,t) + r_1\overline{u}(x,t)[\alpha(x-st)-\overline{u}](x,t).$$
Therefore, by comparison, $u(x,t)\leq \psi_\delta(x-st+x_0)$ for $x\in\bR,\, t>0$.} 

Now, let $w$ be the solution of the initial value problem
\begin{equation}\label{scalarv}
\begin{cases}
w_t(x,t)=d_2\mathcal{N}_2[w](x,t)+r_2w(x,t)[b\psi_\delta (x-st+x_0)-1-w(x,t)], &\; x\in\bR,\, t>0,\\
w(x,0)=v_0(x)\geq0, &\; x\in\bR.
\end{cases}
\end{equation}
From the comparison principle, $v(x,t)\le w(x,t)$ for $x\in \bR,\, t>0$. Suppose that $s>s_*$, {so that we can choose $\delta>0$ small enough such that
$$s> s_* (\delta):= \inf_{\lambda>0}\frac{d_2(\int_\bR{J_2(y)e^{\lambda y}dy}-1)+r_2(b + b\delta-1)}{\lambda}.$$
Since $\max_{x\in\bR}v_0(x) \leq b-1$ and $h(x-st):=b\psi_\delta (x-st+x_0)-1$ is nondecreasing with $h(-\infty)=-1$ and $h(\infty)=b +b \delta -1>0$, another use of}
\cite[Theorem 3.1]{lwz18} asserts that $w(x,t)$ converges to $0$ uniformly for $x\in\bR$ as $t\to\infty$.
Hence \eqref{v-0} follows and the theorem is proved.
\end{proof}

Next, {we consider the case $s^*\leq s_*$, in which the predator may (theoretically) keep pace with a faster climate change than the prey. Then actually} we still have the extinction for both species when $s>s^*$,
due to the shortage of the prey, regardless of the {value} of $s_*$.

\begin{theorem}\label{sp2}
Suppose that {$s^*\leq s_*$}. Assume that {$(u_0,v_0) \in X_1 \times X_{b-1}$ and that} both $u_0$ and $v_0$ have nonempty compact supports.
If $s>s^*$, then
$$\lim_{t\to\infty}v(x,t)=0\, \mbox{ uniformly for $x\in\bR$.}$$
\end{theorem}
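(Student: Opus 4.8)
The plan is to deduce the extinction of the predator from the extinction of the prey. First I would observe that, although the statement is phrased under the assumption $s^* \leq s_*$, this hypothesis only serves to make the conclusion nonredundant (if instead $s^* > s_*$ and $s > s^*$, then already $s > s_*$ and \eqref{v-0} of Theorem~\ref{sp1} applies directly), and the argument below works whenever $s > s^*$. Since $u_0$ has nonempty compact support, \eqref{u-0} of Theorem~\ref{sp1} gives that $u(x,t) \to 0$ as $t \to \infty$ \emph{uniformly} in $x \in \bR$.

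Next I would use this uniform decay to control the reaction term in the $v$-equation. Fix $\varepsilon \in (0,1)$. By the uniform convergence just obtained, there is $T = T(\varepsilon) > 0$ such that $b\, u(x,t) \leq \varepsilon$ for all $x \in \bR$ and $t \geq T$. Hence, from the second equation of \eqref{pp} together with $v \geq 0$, one has
\[
v_t(x,t) \leq d_2 \mathcal{N}_2[v](x,t) + r_2 v(x,t)\big[\varepsilon - 1 - v(x,t)\big] \leq d_2 \mathcal{N}_2[v](x,t) + r_2(\varepsilon-1) v(x,t)
\]
for all $x \in \bR$ and $t \geq T$; in other words, $v$ is a subsolution on $[T,\infty)$ of the linear nonlocal equation $w_t = d_2 \mathcal{N}_2[w] + r_2(\varepsilon - 1) w$.

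Then I would compare $v$ with the spatially homogeneous supersolution $\overline{w}(t) := (b-1)\, e^{r_2(\varepsilon-1)(t-T)}$. Since $\mathcal{N}_2$ annihilates constants, $\overline{w}$ solves this linear equation exactly, and the a priori bound $v(\cdot,T) \leq b-1$ together with the comparison principle for the scalar nonlocal equation yields $0 \leq v(x,t) \leq \overline{w}(t)$ for all $x \in \bR$ and $t \geq T$. Because $\varepsilon < 1$, we have $\overline{w}(t) \to 0$ as $t \to \infty$, and therefore $\lim_{t\to\infty} v(x,t) = 0$ uniformly in $x \in \bR$, as claimed. The argument relies only on comparison principles and the a priori bound, so there is no serious obstacle; the single point requiring care is that one genuinely needs the \emph{uniform-in-space} extinction of the prey provided by \eqref{u-0} (rather than merely local extinction or extinction on moving half-lines), since otherwise $v$ could in principle remain large far out in space.
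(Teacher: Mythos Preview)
Your proof is correct and follows essentially the same route as the paper: use \eqref{u-0} to get uniform extinction of $u$, then compare $v$ with a spatially constant exponentially decaying supersolution. The only cosmetic difference is that you first linearize by dropping the $-v^2$ term and take $\overline{w}$ to be the exact solution of the linear equation, whereas the paper keeps the nonlinear term and checks directly that $(b-1)e^{-\sigma(t-T_0)}$ with $0<\sigma<r_2(1-b\varepsilon)$ is a supersolution; both work equally well.
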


{One can check that Theorem~\ref{sp2}, together with Theorem~\ref{sp1}, completes the proof of the first item in Theorem~\ref{spread_fastpred}.}

\begin{proof}
{Since $s>s^*$, by \eqref{u-0}, for any small $\ep\in(0,1/b)$ there exists $T_0>0$ such that $u(x,t)\leq \ep$ for all $x\in\bR$ for $t\ge T_0$. 
Set
$$\overline{w}(x,t):=(b-1)e^{-\sigma (t-T_0)},$$
where
$$0 < \sigma <r_2 (1 - b \varepsilon).$$
Then a straightforward computation gives
\beaa
&&\overline{w}_t- \{d_2\mathcal{N}_2[\overline{w}]+r_2\overline{w}(b\ep -1 -\overline{w})\}\\
&=&-(b-1)e^{-\sigma(t-T_0)}[\sigma - r_2(1-b\ep) -r_2(b-1)e^{-\sigma(t-T_0)}]\ge 0.
\eeaa
Since $v(x,T_0)\le b-1=\overline{w}(x,T_0)$ for $x\in\bR$, by comparison, $v(x,t)\le \overline{w}(x,t)$ for $x\in\bR,\,t\ge T_0$.
Hence $v(x,t)\to 0$ as $t\to+\infty$ uniformly for $x\in\bR$. This completes the proof of the theorem.}
\end{proof}

{Therefore, according to Theorems~\ref{sp1} and \ref{sp2}, the only chances for either species to survive are when either $s<s^*$ or $s<s_*$.
Even then, we cannot expect the species to spread uniformly when the initial condition is localized. Indeed, they may neither persist behind the shifting climate,
nor invade the favorable part of the environment faster than their `optimal' speeds. The following theorem gives the vanishing of each species in these outer cone regions,
and in particular it includes the second limit of the second item, the first and second limits of the third item of Theorem~\ref{spread_fastprey}, as well as the second item of Theorem~\ref{spread_fastpred}.}

\begin{theorem}\label{sp4} 
{Assume that $(u_0,v_0) \in X_1 \times X_{b-1}$.}
The following statements hold.

{\rm (i)} 
Suppose that $u_0(x)=v_0(x)=0$ for $x\le K_1$ for some constant $K_1$.
Then
\beaa
\lim_{t\to\infty}\sup_{x\le (s-\zeta)t}u(x,t)=0,\;\mbox{if {$s\leq s^*$},}\quad
\lim_{t\to\infty}\sup_{x\le (s-\zeta)t}v(x,t)=0,\;\mbox{if {$s\leq s_*$},}
\eeaa
for any small $\zeta>0$.

{\rm (ii)} If $u_0(x)=v_0(x)=0$ for $x\ge K_2$ for some constant $K_2$, then
$$\lim_{t\to\infty}\sup_{x\ge (s^*+\tau)t}u(x,t)=0,\;\;\lim_{t\to\infty}\sup_{x\ge (s_*+\tau)t}v(x,t)=0, \; \;{\lim_{t\to\infty}\sup_{x\ge (s^*+\tau)t}v(x,t)=0},$$
for any $\tau>0$.
\end{theorem}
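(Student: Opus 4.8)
The plan is to treat each of the two parts by constructing a suitable scalar supersolution and invoking the known spreading/vanishing results for the scalar nonlocal equation from~\cite{lwz18}. For part~(i), suppose first that $u_0 = v_0 = 0$ on $\{x \le K_1\}$. The key idea is that, near the unfavorable region behind the climate front, the prey feels a negative growth rate and cannot build up ahead of its own past position. Concretely, I would compare $u$ with the solution $\overline u$ of the scalar problem
$$\overline u_t = d_1 \mathcal{N}_1[\overline u] + r_1 \overline u[\alpha(x-st) - \overline u], \qquad \overline u(x,0) = u_0(x),$$
which dominates $u$ by the comparison principle (since the $v$-term only decreases the reaction). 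Since $u_0$ is supported in $\{x \ge K_1\}$ and $\alpha$ is bounded above by~$1$, one can further dominate $\overline u$ by the solution of the homogeneous logistic equation $\overline w_t = d_1\mathcal{N}_1[\overline w] + r_1 \overline w(1 - \overline w)$ with the same initial data; the latter spreads to the left with speed $s^* = \inf_{\lambda>0} \lambda^{-1}(d_1(\int J_1 e^{\lambda y}dy - 1) + r_1)$ (cf.~\cite{jz09,w82}). Hence, when $s \le s^*$, for any $\zeta>0$ we have $\sup_{x \le (s-\zeta)t} \overline w(x,t) \to 0$ because $(s-\zeta)t$ eventually lies to the left of the leftward-moving front located near $-s^* t$... but more carefully, since $u_0$ vanishes for $x \le K_1$, $\overline w$ stays small for $x \le -(s^*+\zeta')t$ — so I must instead use that $\alpha(x-st)$ is negative for $x \le (s-\zeta)t$ and large $t$, which forces exponential decay of $\overline u$ there. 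The cleanest route: split $\{x \le (s-\zeta)t\}$ into the zone where $\alpha(x-st) \le -\delta$ (giving decay by a direct comparison with $Ce^{-r_1\delta t}$ after controlling the nonlocal term) and a bounded-width transition zone, on which one uses that the homogeneous equation's solution with compactly supported data is small there by the left-spreading estimate. The predator bound follows analogously: $v$ is dominated by the solution of a scalar equation with reaction $r_2 w(b u - 1 - w) \le r_2 w(b\cdot(\text{bound on }u) - 1 - w)$, and on $\{x \le (s-\zeta)t\}$ the prey bound just obtained makes this growth rate negative for large $t$, forcing $v \to 0$ there when $s \le s_*$ (the $s_*$ threshold entering through the analogous homogeneous comparison with growth rate $r_2(b-1)$).

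For part~(ii), assume $u_0 = v_0 = 0$ on $\{x \ge K_2\}$. Here the obstruction to spreading is the finite propagation speed of the (sub-linearly bounded) nonlocal equation. I would again dominate $u$ by $\overline u$ solving the scalar logistic problem with $\alpha \equiv 1$ and initial data $u_0$; since $u_0$ is compactly supported and the homogeneous equation spreads to the right with speed exactly $s^*$, \cite[Theorem~3.1]{lwz18}-type arguments (or directly the spreading result of~\cite{jz09,w82}) give $\sup_{x \ge (s^*+\tau)t} \overline u(x,t) \to 0$ for every $\tau>0$, whence the first limit. For the predator, once we know $u(x,t)$ is small on $\{x \ge (s^*+\tau/2)t\}$ for large $t$, we compare $v$ with the solution of $w_t = d_2\mathcal{N}_2[w] + r_2 w(b u - 1 - w)$; on that region the growth rate $bu - 1$ is eventually $\le -1/2$, so a comparison with $(b-1)e^{-t/2}$ shifted appropriately yields $\sup_{x \ge (s^*+\tau)t} v(x,t) \to 0$, which is the third listed limit. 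For the second limit, $\sup_{x \ge (s_*+\tau)t} v(x,t) \to 0$: we dominate $v$ by the solution of the scalar equation with reaction $r_2 w(b - 1 - w)$ (using $u \le 1$) and compactly supported initial data $v_0$; this spreads rightward with speed $s_*$, so the same spreading estimate gives the conclusion. One subtlety is that these comparison steps require known results for the nonlocal scalar Fisher-KPP equation with compactly supported data, which are standard (cf.~\cite{jz09,w82}) and already invoked in the introduction.

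The main obstacle I anticipate is part~(i): unlike the classical diffusion case, for nonlocal dispersal one does not have a convenient exponentially decaying explicit supersolution that handles both the transition near $\alpha \approx 0$ and the far unfavorable zone simultaneously, so the zone-splitting argument must be carried out with care. In particular, in the transition region one needs that the homogeneous-equation solution with compactly supported data, which spreads with finite speed, is uniformly small there; this is where the hypotheses $s \le s^*$ (resp. $s \le s_*$) enter, ensuring the moving window $\{x \le (s-\zeta)t\}$ outruns — to the left — the leftmost advance of the dominating homogeneous solution for large $t$. A second, more technical point is controlling the nonlocal operator $\mathcal{N}_i$ when estimating decay in the strongly unfavorable zone: since $J_i$ is compactly supported, $\mathcal{N}_i[\overline u](x,t)$ involves only values of $\overline u$ within a bounded neighborhood of $x$, so one can absorb it into the reaction term by a Gronwall-type argument, but this requires a uniform bound and a slightly enlarged window, which is why the statement is phrased with an arbitrarily small $\zeta$ (resp. $\tau$). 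Everything else reduces to citing~\cite{lwz18,jz09,w82} and applying the comparison principle for the cooperative-free scalar subsystems.
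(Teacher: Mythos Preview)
Your proposal has a genuine gap in the way you handle the predator bounds, both for the $v$-statement in part~(i) and, more visibly, for the third limit in part~(ii). In each case you argue: ``$u$ is small on a moving half-line, hence the growth rate $bu-1$ is negative there, so compare $v$ with $(b-1)e^{-\sigma t}$ (shifted appropriately).'' But the comparison principle for the nonlocal scalar equation requires a \emph{global} supersolution, and the spatially constant function $(b-1)e^{-\sigma t}$ is only a supersolution where $bu-1$ is negative. Outside that moving region $u$ may be close to $1$, so the inequality fails; and because $J_2$ has finite but nonzero support, there is no clean way to restrict the comparison to a moving domain without controlling $v$ on an $L$-neighbourhood of the moving boundary, which you have not done. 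The same issue undermines your zone-splitting argument for $u$ in part~(i): the candidate supersolution $Ce^{-r_1\delta t}$ is not a supersolution near the boundary of $\{x\le (s-\zeta)t\}$, where $\alpha(x-st)$ is no longer $\le-\delta$ and $u$ can be of order one.

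The paper avoids this difficulty by always producing a \emph{globally valid} upper bound for $u$ before treating $v$. For part~(i) it bounds $u$ above by a shift of the forced wave $\psi_\delta(x-st+x_0)$ of a $\delta$-perturbed scalar problem; this gives $v$ a subsolution of a scalar equation with nondecreasing shifting heterogeneity $b\psi_\delta(\cdot)-1$, to which \cite[Theorem~3.3(i)]{lwz18} applies directly. For the third limit in part~(ii), the paper first bounds $u$ by $\min\{1,\,Ae^{-\lambda_1[x-(s^*+\tau/2)t]}\}$ globally, then constructs an explicit spatially structured supersolution $\widehat v=\min\{b-1,\,B'e^{-\lambda'[x-(s^*+\tau/2)t]}\}$ for the resulting $v$-equation, with $\lambda'$ chosen small and $B'$ large. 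This $\widehat v$ equals $b-1$ behind the front (where $u$ may be large) and decays exponentially ahead, which is exactly the structure your constant-in-space candidate lacks. Your treatment of the first two limits in part~(ii) is fine and matches the paper; the fix you need elsewhere is to replace local ``growth rate is negative here'' reasoning by global supersolutions of this matched type.
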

\begin{proof}
The case for $u$ in part (i) follows easily from a comparison with the solution~$z$ of~\eqref{scalar} {with $\delta>0$ arbitrarily small.
Indeed, according to~\cite[Theorem 3.3 (i)]{lwz18}, using $s\leq s^*< s^* (\delta)$ we have that $z$ converges to 0 as $t\to +\infty$ uniformly with respect to $x \leq (s - \zeta) t$, for any small $\zeta >0$.
This simply follows from the fact that $z$ is below some shift of the forced wave solution of~\eqref{scalar}.}

The case for $v$ is similar. {As in the proof of Theorem~\ref{sp1}, we have that $v$ is a subsolution of \eqref{scalarv}, also with arbitrarily small~$\delta >0$, for some $x_0 >0$.
Then~\cite[Theorem 3.3 (i)]{lwz18} again gives the wanted conclusion}.

For part (ii), let $\tau>0$ be given and let $\lambda_1>0$ be {a positive solution} of
\be\label{ld1}
d_1\left(\int_\bR{J_1(y)e^{\lambda y}dy-}1\right)+r_1=\lambda(s^*+\tau/2).
\ee
Then $\overline{u}(x,t):=A e^{-\lambda_1[x-(s^*+\tau/2)t]}$ is a solution of the following linear equation
$$\overline{u}_t(x,t)=d_1\mathcal{N}_1[\overline{u}](x,t) + r_1\overline{u}(x,t),$$
for any positive constant $A$.

Since $u_0(x)=0$ for $x\ge K_2$ and $u_0\le 1$, we can choose a positive constant $A$ large enough such that $u_0(x)\le A e^{-\lambda_1 x}$ for all $x\in\bR$.
Then, by the comparison principle,
$$0\le u(x,t)\le \overline{u}(x,t)=A e^{-\lambda_1[x-(s^*+\tau)t]} e^{-\lambda_1 \tau t/2},$$
for $x\in\bR$, $t\ge 0$.
In particular, we have $0\le u(x,t)\le A e^{-\lambda_1 \tau t/2}$ for $x\ge (s^*+\tau)t$ and $t\ge0$.
This implies that $\lim_{t\to\infty}\sup_{x\ge (s^*+\tau)t}u(x,t)=0$.

Next, we compare $v$ with the function $\overline{v}:= B e^{-\lambda_2[x-(s_*+\tau/2)t]}$, where $\lambda_2$ is {the smaller positive solution} of
\be\label{ld2}
d_2\left(\int_\bR{J_2(y)e^{\lambda y}dy-}1\right)+r_2(b-1)=\lambda(s_*+\tau/2).
\ee
Then we obtain the {first} desired result for $v$ in (ii).

Lastly, recall from the above that $u \leq \overline{u}$ and also $u\le 1$. Therefore, for any $\tau >0$ there exist $A >0$ such that $v$ is a subsolution of
\begin{equation}\label{eq:pred_noprey1}
V_t(x,t)=d_{2}\mathcal{N}_2[V](x,t)+r_{2}V(x,t)[ b\min\{1,A e^{-\lambda_1 [x - (s^* + \tau/2) t]}\}  - 1 -V(x,t)],
\end{equation}
where $\lambda_1$ is a positive solution of \eqref{ld1}.
{Since the case for $s_*\le s^*$ is already included in the first result for $v$ in (ii), we only consider the case when $s_*>s^*$.}
{Then one can find $\lambda'>0$ and $B'>0$ large enough such that
$$\widehat{v}(x,t) :=  \min \left\{  b-1, \, B' e^{- \lambda' [x - (s^* + \tau/2) t]} \right\}$$
is a supersolution of~\eqref{eq:pred_noprey1}.}

Indeed, notice that
$$\lim_{\lambda \to 0} d_2 \left[ \int_{\R} J_2 (y) e^{\lambda y} dy -1 \right] = 0,$$
by (J1)-(J2). Thus we can choose $\lambda ' >0$ small enough such that
$$d_2 \left[ \int_{\R} J_2 (y) e^{\lambda' y} dy -1 \right] - \lambda' \left( s^* + \frac{\tau}{2} \right) < \frac{r_2}{2}.$$
Now, for those points $(x,t)$ with
\beaa
B' e^{- \lambda' [x - (s^* + \tau/2) t]}< b-1,\;\mbox{ or, }\;
e^{-\lambda_1 [x - (s^* + \tau/2) t]}=\left[ e^{- \lambda' [x - (s^* + \tau/2) t]}\right]^{\lambda_1/\lambda'}<\left[\frac{b-1}{B'}\right]^{\lambda_1/\lambda'},
\eeaa
the function $w(x,t):=B' e^{- \lambda' [x - (s^* + \tau/2) t]}$ satisfies
\bea
&&w_t-d_2\mathcal{N}_2[w]-r_{2}w[ b A e^{-\lambda_1 [x - (s^* + \tau/2) t]}  - 1 -w] \nonumber\\
&\geq&w\left\{\lambda'\left(s^*+\frac{\tau}{2}\right)-d_2\left[\int_\bR J_2(y)e^{\lambda' y}dy-1\right]-r_2\left[bA\left(\frac{b-1}{B'}\right)^{\lambda_1/\lambda'}-1\right]\right\} \nonumber \\
& \geq & w r_2 \left\{  \frac{1}{2} - bA\left(\frac{b-1}{B'}\right)^{\lambda_1/\lambda'} \right\} \nonumber \\
& \geq & 0, \nonumber
\eea
where the last inequality holds provided that $B'$ is large enough. It is straightforward to handle the case when $\widehat{v} = b-1$ and thus $\widehat{v}$ is a supersolution of~\eqref{eq:pred_noprey1}.
We obtain the second desired result for $v$ in (ii) by the comparison principle, and the theorem is thereby proved.
\end{proof}

\begin{remark}\label{rk1}
From part (i) of Theorem~\ref{sp4} and Theorem~\ref{sp1}, we see that $u(x,t)\to 0$ and $v(x,t)\to 0$ as $t\to\infty$ for each $x\in\bR$.
 Biologically, this means that no species can survive in the long run without adjusting its habitat.
\end{remark}


\subsection{{Survival}}
{We finally turn to a scenario where a species survives. We start with the simplest case where the prey is able to spread away from the predator.
More precisely,} we give a result on the saturation of the prey in a cone region without predator when $s^*>s_*$,
{which corresponds to the third limits in the second and third items of Theorem~\ref{spread_fastprey}.}

\begin{theorem}\label{sp3} {Let $(u_0,v_0)\in X_1\times X_{b-1}$. Assume that $u_0(x)>0$ on a closed interval and~$v_0$ has a nonempty compact support.
\begin{itemize}
\item If $s_* < s < s^*$, then
\beaa
\lim_{t\to\infty}\left\{\sup_{(s+\ep)t\le x\le (s^*-\ep)t}|u(x,t)-1|\right\}=0
\eeaa
for any $\ep\in(0,(s^*-s)/2)$.
\item If $s \leq s_* < s^*$, then
\beaa
\lim_{t\to\infty}\left\{\sup_{(s_*+\ep)t\le x\le (s^*-\ep)t}|u(x,t)-1|\right\}=0
\eeaa
for any $\ep\in(0,(s^*-s_*)/2)$.
\end{itemize}}
\end{theorem}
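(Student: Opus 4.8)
The plan is to push the predator out of the way in a moving cone ahead of the climate shift, then realize the prey as a supersolution of a scalar nonlocal forced‑wave equation whose spreading behaviour is understood, and finally pass to the limit in a small parameter so as to force $u$ up to $1$. Throughout, since $0\le u_0\le1$ and the constant $1$ is a steady supersolution of the $u$-equation, we have $u\le1$, so it suffices to prove the lower bound $\liminf_{t\to\infty}\inf_{\Sigma_\ep(t)}u\ge1$, where $\Sigma_\ep(t):=[(\sigma+\ep)t,(s^*-\ep)t]$ and $\sigma:=\max\{s,s_*\}$ (so $\Sigma_\ep$ is the cone of the first item when $\sigma=s$ and of the second when $\sigma=s_*$; note $s\le\sigma<s^*$).

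First I would fix $\delta>0$ small and show the predator is negligible on a region slightly larger than $\Sigma_\ep$. Since $v_0$ is compactly supported and $u\le1$, the function $v$ is a subsolution of $V_t=d_2\mathcal N_2[V]+r_2(b-1)V$; comparing with $Be^{-\lambda_2[x-(s_*+\eta)t]}$ as in the proof of Theorem~\ref{sp4}(ii) gives $\sup_{x\ge(s_*+\eta)t}v(x,t)\to0$ for every $\eta>0$. Taking $\eta<\ep/4$, there is $T_\delta$ such that $v\le\delta$ on $G:=\{t\ge T_\delta,\ x\ge(\sigma+\tfrac\ep4)t\}$; enlarging $T_\delta$ we may also assume $\alpha(x-st)\ge1-\delta$ on $G$, since there $x-st\ge(\sigma+\tfrac\ep4-s)t\ge\tfrac\ep4 t\to+\infty$ and $\alpha(+\infty)=1$. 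Hence the prey's reaction coefficient $\hat\alpha(x,t):=\alpha(x-st)-av(x,t)$ obeys $\hat\alpha\ge c_\delta:=1-(1+a)\delta>0$ on $G$ and $\hat\alpha\ge -C_0:=\alpha(-\infty)-a(b-1)$ on $\bR\times(0,\infty)$.

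Next I would build the scalar comparison. Set $\tilde s:=\sigma+\tfrac\ep4$ and pick a continuous nondecreasing profile $\tilde\alpha$ with $\tilde\alpha(-\infty)=-C_0$, $\tilde\alpha(+\infty)=c_\delta$, $\tilde\alpha(\zeta)\le-C_0$ for $\zeta<0$ and $\tilde\alpha(\zeta)\le c_\delta$ for $\zeta\ge0$; then $\tilde\alpha(x-\tilde s t)\le\hat\alpha(x,t)$ for $t\ge T_\delta$ by the previous step, so $u$ is a supersolution for $t\ge T_\delta$ of $w_t=d_1\mathcal N_1[w]+r_1w[\tilde\alpha(x-\tilde s t)-w]$. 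Let $w$ solve this for $t\ge T_\delta$ with $w(\cdot,T_\delta)=u(\cdot,T_\delta)$, which is nontrivial since $u_0>0$ on a closed interval forces $u(x,t)\ge u_0(x)e^{-Ct}>0$ there for a suitable $C$; the scalar comparison principle gives $w\le u$ for $t\ge T_\delta$. Since $\tilde s<s^*$ and the minimal speed
\[
s^*_\delta:=\inf_{\lambda>0}\lambda^{-1}\big(d_1(\textstyle\int_\bR J_1(y)e^{\lambda y}dy-1)+r_1c_\delta\big)
\]
of the homogeneous KPP equation at capacity $c_\delta$ satisfies $s^*_\delta\to s^*$ as $\delta\to0$, we get $\tilde s<s^*_\delta$ for $\delta$ small; so the forced wave of speed $\tilde s$ and profile $\tilde\alpha$ exists, and the associated spreading picture (the scalar analogue of the present theorem, proved as in~\cite{lwz18}) yields $w\to c_\delta$ uniformly on $\{(\tilde s+\ep')t\le x\le(s^*_\delta-\ep')t\}$ for any small $\ep'$. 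Choosing $\ep'\in(\ep/4,3\ep/4)$ and then $\delta$ small enough that $s^*-s^*_\delta<\ep-\ep'$, this moving range contains $\Sigma_\ep(t)$, whence $\liminf_{t\to\infty}\inf_{\Sigma_\ep(t)}u\ge c_\delta=1-(1+a)\delta$. Letting $\delta\to0$ and recalling $u\le1$ finishes the proof.

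The hard part will be the passage from the differential inequality for $u$, which only holds on the moving good region $G$ (not on the whole line, since we only control $v$ there), to a genuine scalar comparison valid everywhere. The device above resolves this by taking the comparison heterogeneity to equal the crude bound $-C_0$ outside $G$ and the favorable constant $c_\delta$ inside $G$: the essential structural point is that the edge of $G$ advances at speed $\tilde s=\sigma+\tfrac\ep4$, which lies strictly below the prey's own invasion speed $s^*_\delta$ once $\delta$ is small, so the prey outruns that edge and saturates the cone. The remaining care --- checking that the cited scalar existence and spreading results of~\cite{lwz18} apply to the merely continuous (smoothed-step) profile $\tilde\alpha$, and tracking the order in which $\ep$, $\ep'$, $\delta$ and $T_\delta$ are chosen --- is routine.
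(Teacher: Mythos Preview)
Your proof is correct and follows essentially the same route as the paper: bound $v$ from above in a moving frame, then view $u$ as a supersolution of a scalar nonlocal forced-speed equation to which the spreading result of \cite{lwz18} (their Theorem~3.3(iii)) applies, and finally take a $\delta\to0$ limit.

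The only noteworthy difference is organizational. You treat both items at once via $\sigma=\max\{s,s_*\}$ and a step-like comparison profile $\tilde\alpha$ with $\tilde\alpha(+\infty)=c_\delta<1$; the paper instead handles the two items separately. In the first item it exploits that $s>s_*$ gives the \emph{global} uniform extinction $v\to0$ (Theorem~\ref{sp1}), so the comparison heterogeneity can simply be $\alpha(x-st)-\delta$. In the second item the paper uses the exponential bound $v\le\min\{b-1,Be^{-\lambda_2[x-(s_*+\tau/2)t]}\}$ together with $s\le s_*$ to set $\hat\alpha(\zeta):=\alpha(\zeta)-a\min\{b-1,Be^{-\lambda_2\zeta}\}$, which is nondecreasing with $\hat\alpha(+\infty)=1$; this gives the conclusion for item~2 in one shot without any $\delta\to0$ step. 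Your unified argument is perfectly valid and arguably cleaner in structure; the paper's version of item~2 is slightly sharper in that it lands directly on the limit $1$.
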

\begin{proof}
First, {we consider the first item and assume that $s_* < s < s^*$.} Recall from \eqref{v-0} that, since $s > s_*$, we have $v(x,t)\rightarrow 0$ uniformly for $x\in\bR$ as~$t\to\infty$.
Next, for any $\delta\in(0,1)$, there exists $T_1>0$ such that $v(x,t)\leq \delta/a$ for all $x\in\bR$ for $t\ge T_1$.
Let $\underline{u}$ and $\overline{u}$ be the solutions of
\begin{equation}\label{loweru}
\begin{cases}
\underline{u}(x,t)=d_1\mathcal{N}_1[\underline{u}](x,t)+r_1\underline{u}(x,t)[\alpha(x-st)-\delta-\underline{u}(x,t)], &\; x\in\bR,\, t>T_1,\\
\underline{u}(x,0)=u(x,T_1), &\; x\in\bR,
\end{cases}
\end{equation}
and
\begin{equation}\label{upperu}
\begin{cases}
\overline{u}_t(x,t)=d_1\mathcal{N}_1[\overline{u}](x,t)+r_1\overline{u}(x,t)[\alpha(x-st)-\overline{u}(x,t)], &\; x\in\bR,\, t>T_1,\\
\overline{u}(x,0)=u(x,T_1), &\; x\in\bR,
\end{cases}
\end{equation}
respectively.
Then, by comparison, $\underline{u}(x,t)\le u(x,t)\leq \overline{u}(x,t)$ for $x\in\bR,\,t\ge T_1$.

Now fix $\ep\in(0,(s^*-s)/2)$. Then, from~\cite[Theorem 3.3 (iii)]{lwz18}, {we have
\be\label{upper}
\lim_{t\to\infty}\left\{\sup_{(s+\ep)t\le x\le (s^*-\ep)t}|\overline{u}(x,t)-1|\right\}=0.
\ee
In other words, when there is no predator and if it is fast enough, the prey manages to spread ahead of the climate change.
This is a consequence of the existence of compactly supported subsolutions, moving with any speed less than but arbitrarily close to $s^*$,
in the homogeneous scalar equation for $u$ obtained by setting $\alpha \equiv 1$ and $v \equiv 0$.}

Now, let
$${s^*(- \delta)}:=\inf_{\lambda>0}\frac{d_1(\int_\bR{J_1(y)e^{\lambda y}dy}-1)+r_1[\alpha(\infty)-\delta]}{\lambda}.$$
Note that $s^*(-\delta)\uparrow s^*$ as $\delta\downarrow 0^+$. Hence $s^*(-\delta)>s$ and $\ep<[s^*(-\delta)-s]/2$, if $\delta\ll 1$.
Then, for any such small $\delta$, it follows from \cite[Theorem 3.3 (iii)]{lwz18} {again} that
\be\label{lower-d}
\lim_{t\to\infty}\left\{\sup_{(s+\ep /2)t\le x\le (s^*(-\delta)-\ep/2)t}|\underline{u}(x,t)-1+\delta|\right\}=0.
\ee
{Now choose $\delta_0 >0$ small enough such that $s^* - \varepsilon \leq s^* (-\delta_0) - \ep/2$.
Then for any $\delta \in (0,\delta_0)$ we put \eqref{upper} and \eqref{lower-d} together with $\underline{u} \leq u \leq \overline{u}$, and we find that
$$\limsup_{t\to \infty} \left\{ \sup_{(s+\varepsilon) t\leq x \leq (s^* - \varepsilon) t} | u (x,t)- 1| \right\} \leq \delta.$$
Since $\delta>0$ is arbitrarily small, this completes the proof of the first item of Theorem~\ref{sp3}.}

%

Now we turn to the second item. According to part (ii) of Theorem~\ref{sp4}, we know that
$$\lim_{t \to \infty} \sup_{x \geq (s_* + \tau) t} v(x,t)=0,$$
for $\tau >0$ arbitrarily small. This means that in any moving frame with speed larger than $s_* + \tau$,
we can use the same construction of compactly supported subsolutions as in the homogeneous case or, e.g.~\cite{lwz18},
to reach the conclusion that $u(x,t)$ converges to $1$ as $t \to \infty$ in the moving frames with speeds in the interval $(s_* + \tau, s^* - \tau)$, when $s\leq s_*<s^*$.
For the sake of conciseness, here we show how to obtain this result by a comparison with a scalar equation with shifting heterogeneity as follows.

{Indeed, we also know from the proof of part (ii) of Theorem~\ref{sp4} that
\beaa
v (x,t )\leq  \min\{ b-1, B e^{-\lambda_2 [ x - (s_* + \tau/2) t]}\},
\eeaa
for any $\tau >0$ arbitrarily small, and for some $B, \lambda_2 >0$. It follows that
\beaa
{\alpha (x - st)  -a v (x,t) \geq \widehat{\alpha} (x -  (s_* + \tau/2) t) }
\eeaa
for some function $\widehat{\alpha}$ satisfies $(\alpha 1)$ and $(\alpha 2)$, {using $s\le s_*$}.
In particular,
$$u_t (x,t ) \geq d_{1}\bNu(x,t)+r_{1}u(x,t)[{\widehat{\alpha} (x-(s_* + \tau/2)t)}-u(x,t)], \; x\in\bR,\, t>0 .$$
From \cite[Theorem 3.3 (iii)]{lwz18} and the comparison principle, one gets that
$$\liminf_{ t \to \infty}   \inf_{ (s_* + \tau) t \leq x \leq (s^* - \tau) t} u (x,t) \geq 1.$$
{Since $u\le 1$,} this ends the proof of the second item of Theorem~\ref{sp3}.}
\end{proof}


Finally, it remains to study the dynamics of $u$ and $v$ when both $s^*$ and $s_*$ are bigger than the changing speed $s$.
First, we give a proof of Theorem~\ref{upos}, which shortly follows from known results for the scalar equation with nonlocal dispersal climate change~\cite{lwz18}.

\begin{proof}[Proof of Theorem~\ref{upos}]
 Since $v\le b-1$, comparing $u$ with the solution $\underline{u}$ of the initial value problem
\begin{equation}\label{upperuu}
\begin{cases}
\underline{u}_t(x,t)=d_1\mathcal{N}_1[\underline{u}](x,t)+r_1\underline{u}(x,t)[\alpha(x-st)-a(b-1)-\underline{u}(x,t)], &\; x\in\bR,\, t>0,\\
\underline{u}(x,0)=u_0(x) &\; x\in\bR,
\end{cases}
\end{equation}
we obtain $u(x,t)\ge \underline{u}(x,t)$ for all $x\in\bR$, $t>0$.
{Then, applying \cite[Theorem 3.3 (iii)]{lwz18}, we deduce that
\be\label{ubar}
\liminf_{t\to\infty}\{\inf_{(s+\ep)t\le x\le (s^{**}-\ep)t}u(x,t)\}\ge \lim_{t\to\infty}\{\inf_{(s+\ep)t\le x\le (s^{**}-\ep)t}\underline{u}(x,t)\}=1-a(b-1)>0
\ee
for any $\ep\in(0,(s^{**}-s)/2)$.
Hence the proof is done.}
\end{proof}

With Theorem~\ref{upos} in hand, we can now turn to the proof of Theorem~\ref{vpos}.

\begin{proof}[Proof of Theorem~\ref{vpos}]
Let $\underline{v}$ be the solution of the following initial value problem
\begin{equation}\label{lowerv}
\begin{cases}
\underline{v}_t=d_2\mathcal{N}_2[\underline{v}](x,t)+r_2\underline{v}(x,t)[h(x,t)-\underline{v}(x,t)], &\; x\in\bR,\, t>0,\\
\underline{v}(x,0)=v_0(x), &\; x\in\bR,
\end{cases}
\end{equation}
where $h(x,t):=-1+b\underline{u} (x,t)$.
Then, by comparison, $v (x,t) \ge \underline{v} (x,t)$ for all $x\in\bR$, $t>0$.

Let $\lambda^* >0 $ be such that $s_{**}=\chi(\lambda^*)$, where
\beaa
\chi(\lambda):=\frac{d_2(\int_\bR{J_2(y)e^{\lambda y}dy}-1)+r_2(b-1)(1-ab)}{\lambda}.
\eeaa
Following \cite{lwz18}, we also introduce
\bea
&&\varphi(\lambda,\gamma)=d_2\int_\bR {J_2(y)e^{\lambda y}\frac{\sin(\gamma y)}{\gamma}dy}=d_2\int_0^\infty J_2(y)\left(e^{\lambda y}-e^{-\lambda y}\right)\frac{\sin(\gamma y)}{\gamma}dy,\label{vphi}\\
&&{\omega(\lambda,\gamma)}=\frac{d_2[\int_\bR{J_2(y)e^{\lambda y}\cos(\gamma y)dy}-1] + r_2(b-1)(1-ab)}{\lambda}.\label{slg}
\eea
Note that $\varphi(\lambda,\gamma)$ is increasing in $\lambda$ for $\lambda>0$ and $\gamma\in(0,\pi/(2L))$, where $L$ is a positive constant such that $J_2(x)=0$ outside $[-L,L]$.
Moreover, $\inf_{\lambda>0}\omega(\lambda,\gamma)\to s_{**}$ as $\gamma\to 0$.

For a given $\ep\in (0,(\underline{s}^*-s)/4)$, we choose $\delta\in(0,1)$ such that $0<\delta<{(\underline{s}^*-s)/}{\ep}-4$ and $0<\ep<(\underline{s}^*-s)/(4+\delta)$.
Since $\inf_{\lambda>0}\omega(\lambda,\gamma)\to s_{**}$ as $\gamma\to0$, we get that
\be\label{order}
{s_{**}-\inf_{\lambda>0}\omega(\lambda,\gamma)\le \delta\ep}
\ee
for {all} $\gamma$ sufficiently small. 
On the other hand, from the definition of $s_{**}$ and $\lambda^*$, we have $\chi'(\lambda^*)=0$ and it follows that
\beaa
d_2\int_\bR{J_2(y)ye^{\lambda^* y}dy}=\chi(\lambda^*)=s_{**}.
\eeaa
Hence
\beaa
\varphi(\lambda^*,\gamma)=d_2\int_\bR{J_2(y)ye^{\lambda^* y}\frac{\sin(\gamma y)}{\gamma y}dy}\to d_2\int_\bR{J_2(y)ye^{\lambda^* y}dy}=s_{**}\;\mbox{ as $\gamma\to 0$.}
\eeaa

Now, we choose a small $\nu\in(0,\pi/(2L))$ such that $\varphi(\lambda^*,\nu)\ge s_{**}-\ep$ and \eqref{order} holds with $\gamma = \nu$.
Then, for this fixed~$\nu$, we claim that there exist $\lambda_1$ and $\lambda_2$ with $0<\lambda_1<\lambda_2<\lambda^*$ such that
\be\label{ld12}
{\varphi(\lambda_1,\nu)=s+\ep,\quad \varphi(\lambda_2,\nu)= \underline{s}^*-(1+\delta)\ep.}
\ee
Indeed, the existence of $\lambda_1$ and $\lambda_2$ follows from
\beaa
{0=\varphi(0,\nu)<s+\ep<\underline{s}^*-(1+\delta)\ep\le s_{**}-(1+\delta)\ep<s_{**}-\ep\le \varphi(\lambda^*,\nu)}
\eeaa
and the continuity of $\varphi$ in $\lambda$.

Let $\rho\in[\lambda_1,\lambda_2]$ be fixed and consider the function
$${\tilde{v}(x,t)=\eta\mu(x-\varphi t),}$$
where $\eta$ is a positive constant to be chosen later, the function $\mu$ (see \cite{w82}) is defined by
\begin{align*}
\mu(z):=\begin{cases}e^{-\rho z}\sin(\nu z), &\mbox{ for $0\le z\le \pi/\nu$},\\
0, &\mbox{ otherwise,}\end{cases}
\end{align*}
and, hereafter, $\varphi=\varphi(\rho,\nu)$ for convenience. We claim that there exist $\eta$ and $T$ (independent of $\rho$) such that
\be\label{claim1}
\tilde{v}_t\le d_2\mathcal{N}_2[\tilde{v}](x,t)+r_2\tilde{v}(x,t)[h(x,t)-\tilde{v}(x,t)]\ee
for any $t > T$ and $x \in \R$. We only need to consider the interval $\varphi t< x < \varphi t + \pi/\nu$ where the function $\tilde{v} (x,t)$ is not trivial.

To derive \eqref{claim1}, we first compute
$$\tilde{v}_t(x,t) = \eta\varphi e^{-\rho(x-\varphi t)}\left\{\rho \sin(\nu(x-\varphi t)) -\nu \cos(\nu(x-\varphi t))\right\}$$
for $t>0$ and $\varphi t< x < \varphi t + \pi/\nu$. Next, due to $\nu<\pi/(2L)$, we have for any $t>0$, $\phi t < x < \phi t + \pi / \nu$ and $y \in \operatorname{supp} (J_2) \subset [-L,L]$, that
$$\tilde{v}(x-y,t)\ge \eta e^{-\rho(x-y-\varphi t)}\sin(\nu(x-y-\varphi t)).$$
Then,
\beaa
&&d_2\left(\int_\bR{J_2(y)\tilde{v}(x-y,t)dy}-\tilde{v}(x,t)\right)+r_2\tilde{v}(x,t)[h(x,t)-\tilde{v}(x,t)]\\
&\ge& \eta d_2e^{-\rho(x-\varphi t)}\Big\{\int_\bR{J_2(y)e^{\rho y}\sin(\nu(x-y-\varphi t))dy}-\sin(\nu(x-\varphi t))\Big\}\\
&&\quad + r_2\eta e^{-\rho(x-\varphi t)}\sin(\nu(x-\varphi t))[h(x,t)-\eta\mu(x-\varphi t)]\\
&=&\eta d_2e^{-\rho(x-\varphi t)}\Big\{\int_\bR{J_2(y)e^{\rho y}\cos(\nu y)\sin(\nu(x-\varphi t))dy}\\
&&\qquad - \int_\bR{J_2(y)e^{\rho y}\sin(\nu y)\cos(\nu(x-\varphi t))dy}-\sin(\nu(x-\varphi t))\Big\}\\
&&\quad+ r_2\eta e^{-\rho(x-\varphi t)}\sin(\nu(x-\varphi t)[h(x,t)-\eta\mu(x-\varphi t)].
\eeaa
for $t>0$ and $\varphi t\le x \le \varphi t + \pi/\nu$. Hence, using~\eqref{vphi}, we find that \eqref{claim1} holds in this domain if and only if
\be\label{key2}
\rho \varphi\le d_2\left(\int_\bR{J_2(y)e^{\rho y}\cos(\nu y)dy}-1\right) + r_2[h(x,t)-\eta\mu(x-\varphi t)].
\ee
Since $\varphi(\lambda,\nu)$ is increasing in $\lambda$ and $\rho\in[\lambda_1,\lambda_2]$, by \eqref{ld12} we get
\beaa
s+\ep=\varphi(\lambda_1,\nu)\le \varphi = \varphi(\rho,\nu)\le\varphi(\lambda_2,\nu)=\underline{s}^{*}-(1+\delta)\ep<s^{**}-\ep .
\eeaa
Hence there exists $T_1>1$ sufficiently large (independent of $\rho \in [\lambda_1 , \lambda_2]$) such that
\be\label{key}
{[\varphi t, \varphi t +\pi/\nu]\subset [(s+\ep)t,(s^{**}-\ep)t]\;\mbox{ for $t\ge T_1$.}}
\ee
It follows from \eqref{ubar}, \eqref{key} and $h(x,t) = -1 + b \underline{u} (x,t)$ that
\beaa
\liminf_{t\to \infty} \inf_{x\in [\varphi t,\varphi t + \pi/\nu]} h(x,t)  \geq -1 + b (1-a (b-1)) = (b-1)(1-ab).
\eeaa
Then, for {a sufficiently small $\ep_1\in(0,\lambda_1 \ep)$}, there exists $T_2\ge T_1$ (also independent of $\rho$) such that
\beaa
h(x,t)\ge (b-1)(1-ab)-{\ep_1/r_2},
\eeaa
for any $t \geq T_2$ and $x\in [\varphi t,\varphi t + \pi/\nu]$.

Finally, we choose a sufficiently small positive constant $\eta$ such that
\beaa
r_2\eta<\lambda_1\ep-\ep_1.
\eeaa
Note that, {by \eqref{order},
\beaa
&&\omega(\rho,\nu)-\varphi(\rho,\nu)\ge \inf_{\lambda>0}\omega(\lambda,\nu)-\varphi(\lambda_2,\nu)\\
&=&\inf_{\lambda>0}\omega(\lambda,\nu)-[\underline{s}^{*}-(1+\delta)\ep] \ge \ep,\;\forall\,\rho\in[\lambda_1,\lambda_2].
\eeaa
Hence $\rho[\omega(\rho,\nu)-\varphi(\rho,\nu)]\ge\lambda_1[\omega(\rho,\nu)-\varphi(\rho,\nu)]\ge \lambda_1\ep$.}
Then, recalling also~\eqref{slg} we obtain
\beaa
\rho \varphi &\le& \rho\omega(\rho,\nu)-\lambda_1\ep \\
& < & \rho\omega(\rho,\nu)-\ep_1-r_2\eta\\
&=&d_2\left(\int_\bR{J_2(y) e^{\rho y}\cos(\nu y)dy}-1\right)+r_2(b-1)(1-ab)-\ep_1 -r_2\eta\\
&\le&{d_2\left(\int_\bR{J_2(y)e^{\rho y}\cos(\nu y)dy}-1\right) + r_2[h(x,t)-\eta\mu(x-\varphi t)]},
\eeaa
for $t \geq T_2$ and $x\in [\varphi t,\varphi t + \pi/\nu]$. Here we also used the fact that $\mu (z) \leq 1$ for any $z \geq 0$. Thus \eqref{key2} holds and so does~\eqref{claim1} for $t \ge T:= T_2$ and $x\in \R$.

Since $v_0$ is positive for some closed interval, we have that $\underline{v}(x,t)>0$ for all $x\in\bR,\,t>0$.
Up to reducing $\eta$, we can assume without loss of generality that
$$\underline{v} (x,T) \geq \eta \; \mbox{ for $x \in [(s+\varepsilon ) T ,  (\underline{s}^* - \varepsilon) T]$}.$$
In particular, we have
\beaa
\underline{v}(x,T)\geq \tilde{v}(x,T)\;\mbox{for $x\in [\varphi T,\varphi T + \pi/\nu]$,}
\eeaa
for any $\rho \in [\lambda_1,\lambda_2]$, due to~\eqref{ld12}. Also, $\underline{v}(x,T)>0=\tilde{v}(x,T)$ for $x < \varphi T$ and $x > \varphi T+\pi/\nu$.
Then, by comparison,
\be\label{order3}
v(x,t)\ge \underline{v}(x,t)\ge \tilde{v}(x,t)=\eta e^{-\rho(x-\varphi t)}\sin\left(\nu(x-\varphi t)\right),
\ee
for any $\rho \in [\lambda_1, \lambda_2]$, all $t \ge T$ and $x\in [\varphi t,\varphi t + \pi/\nu]$. On the other hand,
\be\label{goal}
\tilde{v}(x,t)\ge \frac{\eta}{\sqrt{2}} e^{-3\pi\rho/(4\nu)},\;\forall\, t\ge T , \, x\in[\varphi t+\pi/(4\nu),\varphi t+3\pi/(4\nu)] .
\ee
Since the estimate \eqref{goal} also holds for all $\rho\in[\lambda_1,\lambda_2]$, it follows from \eqref{order3} and another use of~\eqref{ld12} that
\be\label{goalb}
v(x,t)\ge \frac{\eta}{\sqrt{2}} e^{-3\pi\lambda_2/(4\nu)},\;\forall \, t \geq T, \, x\in[(s+\ep)t+\pi/(4\nu),(\underline{s}^*-(1+\delta)\ep)t+3\pi/(4\nu)].
\ee
Recalling that $\delta\in(0,1)$, we have that
\beaa
[(s+2\ep)t,(\underline{s}^*-2\ep)t]\subset[(s+\ep)t+\pi/(4\nu),(\underline{s}^*-(1+\delta)\ep)t+3\pi/(4\nu)]
\eeaa
for all $t\gg 1$. Then we conclude from~\eqref{goalb} that
\beaa
\liminf_{t\to\infty}\, \{\inf_{(s+2\ep)t\le x\le (\underline{s}^*-2\ep)t} v(x,t)\}\ge \frac{\eta}{\sqrt{2}} e^{-3\pi\lambda_2/(4\nu)} >0,
\eeaa
with $\ep$ arbitrarily small. This completes the proof of Theorem~\ref{vpos}.
\end{proof}



\section{The standard diffusion case}
\setcounter{equation}{0}

{In this section, we consider the diffusive predator-prey model with climate change, that is~\eqref{spp} which we recall here:}
\begin{equation*}
\begin{cases}
u_t(x,t)=d_{1} u_{xx}(x,t)+r_{1}u(x,t)[{\alpha(x-st)}-u(x,t)-av(x,t)], \; x\in\bR,\, t>0,\\
v_t(x,t)=d_{2} v_{xx}(x,t)+r_{2}v(x,t)[-1+bu(x,t)-v(x,t)], \; x\in\bR,\, t>0.
\end{cases}
\end{equation*}

First, the existence of forced waves for system \eqref{spp} can be derived {in the same fashion as for~\eqref{pp}.}
With a slight modification (cf. \cite{cgy17,hlr05,m01}), the proof is almost the same as that in \S3, and we safely omit it here.
{We also refer to~\cite[Theorem 1.1]{hz2017}) for the existence of forced waves in the scalar equation with standard diffusion,
from which we can infer lower solutions before applying a fixed point approach as in \S3.}

{Therefore, we will focus here on the issue of the spatio-temporal dynamics in the Cauchy problem \eqref{spp} and \eqref{pp-ic}.} Here we recall \eqref{sspeed}:
\beaa
s^*:=2\sqrt{d_1r_1},\; s_{*}:=2\sqrt{d_2r_2(b-1)}.
\eeaa
{{Actually, several results can again be proved in a very similar way as in the nonlocal case.
For the reader's convenience, we repeat all the statements and only point out the major differences of their proofs.}}

Similar to Theorem~\ref{sp1}, we first have

\begin{theorem}\label{ssp1}
{Assume that $(u_0,v_0) \in X_1 \times X_{b-1}$, and that both $u_0$ and $v_0$ have nonempty compact supports.} Then
\bea
&&\lim_{t\to\infty}u(x,t)=0 \mbox{ uniformly for $x\in\bR$, if $s>s^*$};\label{su-0}\\
&&\lim_{t\to\infty}v(x,t)=0 \mbox{ uniformly for $x\in\bR$, if $s>s_*$ .}\label{sv-0}
\eea
\end{theorem}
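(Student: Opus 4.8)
The plan is to mirror the proof of Theorem~\ref{sp1} line by line, replacing the nonlocal operators $\mathcal N_i$ by the Laplacian and the nonlocal scalar results of~\cite{lwz18} by their classical counterparts; see~\cite{bd09} for a bounded favorable zone and~\cite{hz2017,lbsf2014} for the unbounded case. Throughout, $s^*=2\sqrt{d_1r_1}$ and $s_*=2\sqrt{d_2r_2(b-1)}$.

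\textbf{Step 1: extinction of the prey when $s>s^*$.} Since $v\ge 0$, the prey density satisfies $u_t\le d_1u_{xx}+r_1u[\alpha(x-st)-u]\le d_1u_{xx}+r_1u[\alpha(x-st)+\delta-u]$ for any fixed $\delta>0$, hence $u$ is a subsolution of the scalar problem
\begin{equation*}
z_t=d_1z_{xx}+r_1z[\alpha(x-st)+\delta-z],\qquad z(\cdot,0)=u_0.
\end{equation*}
Because $u_0\le 1<1+\delta=\alpha(\infty)+\delta$, the classical shifting-habitat extinction result (the standard-diffusion analogue of~\cite[Theorem 3.1]{lwz18}; see also~\cite{bd09,hz2017,lbsf2014}) yields $z(\cdot,t)\to 0$ uniformly in $x$ as $t\to\infty$, provided $s>s^*(\delta):=2\sqrt{d_1r_1(1+\delta)}$. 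Since $s^*(\delta)\downarrow s^*$ as $\delta\downarrow 0$, for $s>s^*$ we fix $\delta$ small enough and conclude by the comparison principle for the scalar equation that $u(\cdot,t)\to 0$ uniformly, which is~\eqref{su-0}.

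\textbf{Step 2: an upper bound on the prey by a shifted forced wave.} For any $\delta>0$, the perturbed scalar equation $z_t=d_1z_{xx}+r_1z[\alpha(x-st)+\delta-z]$ possesses a nondecreasing forced wave $\psi_\delta(x-st)$ with $\psi_\delta(-\infty)=0$ and $\psi_\delta(\infty)=1+\delta$; its existence follows as in \S3 by a fixed-point argument together with the scalar existence result of~\cite{hz2017} (cf.~\cite[Theorem 1.1]{hz2017}). Since $u_0$ is compactly supported with $u_0\le 1<1+\delta=\psi_\delta(\infty)$, we may choose $x_0>0$ so that $u_0(x)<\psi_\delta(x+x_0)$ for all $x\in\bR$. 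Using that $\alpha$ and $\psi_\delta$ are nondecreasing, a direct computation shows that $\bar u(x,t):=\psi_\delta(x-st+x_0)$ satisfies $\bar u_t\ge d_1\bar u_{xx}+r_1\bar u[\alpha(x-st)-\bar u]\ge d_1\bar u_{xx}+r_1\bar u[\alpha(x-st)-\bar u-av]$, so $\bar u$ is a supersolution of the $u$-equation of~\eqref{spp}; hence $u(x,t)\le\psi_\delta(x-st+x_0)$ for all $x\in\bR,\ t>0$.

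\textbf{Step 3: extinction of the predator when $s>s_*$.} Inserting the bound of Step 2 into the $v$-equation, $v$ is a subsolution of
\begin{equation*}
w_t=d_2w_{xx}+r_2w[\,b\psi_\delta(x-st+x_0)-1-w\,],\qquad w(\cdot,0)=v_0,
\end{equation*}
and $h(x-st):=b\psi_\delta(x-st+x_0)-1$ is nondecreasing with $h(-\infty)=-1<0<h(\infty)=b+b\delta-1$ (recall $b>1$). Since $v_0\le b-1<b+b\delta-1=h(\infty)$, a further application of the classical shifting-habitat extinction result gives $w(\cdot,t)\to 0$ uniformly in $x$ whenever $s>s_*(\delta):=2\sqrt{d_2r_2(b+b\delta-1)}$. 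As $s_*(\delta)\downarrow s_*$ when $\delta\downarrow 0$, for $s>s_*$ we fix $\delta$ small and conclude $v(\cdot,t)\le w(\cdot,t)\to 0$ uniformly, which is~\eqref{sv-0}.

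The only non-routine ingredient is the classical scalar statement — extinction in a monotone shifting habitat with growth rate $\beta(x-st)$, $\beta(-\infty)<0<\beta(\infty)$, when $s$ exceeds $2\sqrt{d\,\beta(\infty)}$, together with the existence of a monotone forced wave for the perturbed equation. Both are available in~\cite{bd09,hz2017,lbsf2014}, so the remaining work is bookkeeping: checking that these apply with the perturbed coefficients $\alpha+\delta$ and $b\psi_\delta-1$, and tracking the continuity $s^*(\delta)\to s^*$, $s_*(\delta)\to s_*$ as $\delta\to 0$. I expect this to be the main (but mild) obstacle; no comparison-principle difficulty arises, since after the one-sided estimate $u\le\psi_\delta(\cdot-st+x_0)$ the predator equation reduces to a scalar problem.
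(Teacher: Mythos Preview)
Your proposal is correct and follows essentially the same route as the paper, which simply says to mirror the proof of Theorem~\ref{sp1} with the nonlocal scalar results of~\cite{lwz18} replaced by their local analogues from~\cite{lbsf2014,hz2017}. The only difference is that the paper notes the $\delta$-perturbation can be dropped altogether in the local case by invoking \cite[Theorem~2.1]{lbsf2014} directly, so your Steps~1--3 work but are slightly more elaborate than necessary.
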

\begin{proof}
The proof is the same as that of Theorem~\ref{sp1}, by using Theorems 2.1 and 1.1 in \cite{lbsf2014}.
Note that we {do not need the perturbation with a $\delta$-term in this argument, by} applying \cite[Theorem 2.1]{lbsf2014}.
\end{proof}

Next, we have a similar result to Theorem~\ref{sp2} in the case {$s^* \leq s_*$. The proof is the same and therefore we omit it.}

\begin{theorem}\label{ssp2}
Suppose that {$s^*\leq s_*$}. Assume that {$(u_0,v_0) \in X_1 \times X_{b-1}$, and that} both $u_0$ and $v_0$ have nonempty compact supports.
If $s>s^*$, then
$$\lim_{t\to\infty}v(x,t)=0\, \mbox{ uniformly for $x\in\bR$.}$$
\end{theorem}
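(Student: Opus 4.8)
The plan is to follow the proof of Theorem~\ref{sp2} essentially verbatim, replacing the nonlocal operator $\mathcal{N}_2$ by $d_2\partial_{xx}$ and the nonlocal comparison principle by the parabolic maximum principle. First I would invoke Theorem~\ref{ssp1}: since $s>s^*$, the limit \eqref{su-0} holds, so for any fixed $\varepsilon\in(0,1/b)$ there is $T_0>0$ with $u(x,t)\le\varepsilon$ for all $x\in\bR$ and $t\ge T_0$. Because $v\ge 0$, on the time interval $t\ge T_0$ the second equation of \eqref{spp} yields
\[
v_t(x,t)\le d_2 v_{xx}(x,t)+r_2 v(x,t)[\,b\varepsilon-1-v(x,t)\,].
\]

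Next I would build a spatially homogeneous supersolution of the right-hand side above. Set $\overline{w}(t):=(b-1)e^{-\sigma(t-T_0)}$ with $0<\sigma<r_2(1-b\varepsilon)$, the latter interval being nonempty precisely because $\varepsilon<1/b$. Since $\overline{w}$ does not depend on $x$, the diffusion term vanishes and a direct computation gives
\[
\overline{w}_t-d_2\overline{w}_{xx}-r_2\overline{w}[\,b\varepsilon-1-\overline{w}\,]=-(b-1)e^{-\sigma(t-T_0)}\big[\,\sigma-r_2(1-b\varepsilon)-r_2(b-1)e^{-\sigma(t-T_0)}\,\big]\ge 0
\]
for all $t\ge T_0$, using $\sigma<r_2(1-b\varepsilon)$. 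Moreover, since the rectangle $[0,1]\times[0,b-1]$ is invariant for \eqref{spp} (by the scalar maximum principle applied to each component, using $\alpha\le 1$ and $b>1$), we have $v(x,T_0)\le b-1=\overline{w}(T_0)$ for every $x$.

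Finally I would apply the comparison principle to the scalar parabolic inequality above, treating $b\varepsilon-1$ as a fixed coefficient, to conclude $0\le v(x,t)\le\overline{w}(t)=(b-1)e^{-\sigma(t-T_0)}$ for all $x\in\bR$ and $t\ge T_0$; letting $t\to\infty$ gives the claim. I do not anticipate any genuine obstacle: the only substantial input is Theorem~\ref{ssp1}, already established, and the remainder is a one-line ODE supersolution estimate. The single point requiring a moment's care is the preservation of the bound $v\le b-1$, which follows from the invariance of $[0,1]\times[0,b-1]$; in the local case this is in fact more immediate than in the nonlocal setting, being a direct consequence of the parabolic maximum principle for each equation in \eqref{spp}.
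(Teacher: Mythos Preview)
Your proposal is correct and follows exactly the approach the paper intends: the paper omits the proof of Theorem~\ref{ssp2} with the remark that it is the same as that of Theorem~\ref{sp2}, and your argument reproduces that proof with $d_2\partial_{xx}$ in place of $d_2\mathcal{N}_2$. The spatially homogeneous supersolution $\overline{w}(t)=(b-1)e^{-\sigma(t-T_0)}$ and the use of the bound $v\le b-1$ are precisely the ingredients used there.
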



{Moreover, we have the following vanishing result as Theorem~\ref{sp4} in the outer cone regions.}

\begin{theorem}\label{ssp4}
{Assume that $(u_0,v_0) \in X_1 \times X_{b-1}$.
The following statements hold.}

{\rm (i)} 
{It holds}
\beaa
\lim_{t\to\infty}\sup_{x\le (s-\zeta)t}u(x,t)=0,\;\mbox{if {$s^*\geq s$},}\quad
\lim_{t\to\infty}\sup_{x\le (s-\zeta)t}v(x,t)=0,\;\mbox{if {$s_* \geq s$},}
\eeaa
for any small $\zeta>0$.

{\rm (ii)} If $u_0(x)=v_0(x)=0$ for $x\ge K$ for some constant $K$, then
$$\lim_{t\to\infty}\sup_{x\ge (s^*+\tau)t}u(x,t)=0,\;\;\lim_{t\to\infty}\sup_{x\ge (s_*+\tau)t}v(x,t)=0,  \; \;{\lim_{t\to\infty}\sup_{x\ge (s^*+\tau)t}v(x,t)=0},$$
for any $\tau>0$.
\end{theorem}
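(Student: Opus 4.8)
The plan is to transcribe the proof of Theorem~\ref{sp4} to the standard diffusion setting, replacing the nonlocal operators $\mathcal{N}_i$ by $d_i\partial_{xx}$ and the nonlocal dispersion relations $d_i(\int_{\bR}J_i(y)e^{\lambda y}dy-1)=\lambda c$ by their quadratic analogues $d_i\lambda^2-\lambda c+(\text{linear reaction rate})=0$, and the scalar results of~\cite{lwz18} by those of~\cite{lbsf2014} when needed. For part~(i) the standard diffusion actually permits a short self-contained argument, which moreover explains why, in contrast with Theorem~\ref{sp4}, no support assumption on $(u_0,v_0)$ is required: behind the shifting interface the environment is uniformly hostile to the prey, and the mass diffusing in from the favorable zone is damped by an explicit moving supersolution. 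For part~(ii) the argument is the same construction of exponentially decaying supersolutions as in Theorem~\ref{sp4}.

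For part~(i), fix $\zeta>0$ and choose $0<\zeta'<\zeta''<\zeta$. Since $\alpha$ is nondecreasing with $\alpha(-\infty)<0$, there exist $c_0>0$ and $T_0>0$ such that $\alpha(x-st)\le-c_0$ whenever $t\ge T_0$ and $x\le(s-\zeta')t$; using $u,v\ge0$, on this moving half-line $u$ is a subsolution of $u_t=d_1u_{xx}-r_1c_0u$, for which $W(x,t):=e^{\mu[x-(s-\zeta')t]}+e^{-\sigma(t-T_0)}$ is a supersolution as soon as $0<\sigma<r_1c_0$ and $\mu>0$ satisfies $d_1\mu^2+\mu(s-\zeta')<r_1c_0$. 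Since moreover $W\ge1\ge u$ on the line $x=(s-\zeta')t$ and at $t=T_0$ (using that $u\le1$ for all $t$ because $0\le u_0\le1$), comparison yields $u\le W$ on $\{t\ge T_0,\;x\le(s-\zeta')t\}$, whence $u(x,t)\to0$ uniformly on $\{x\le(s-\zeta'')t\}$, and a fortiori on $\{x\le(s-\zeta)t\}$, which is the first limit. In particular, for any $\varepsilon\in(0,1/b)$ there is $T_1\ge T_0$ with $-1+bu(x,t)\le-1+b\varepsilon<0$ on $\{t\ge T_1,\;x\le(s-\zeta'')t\}$, so on that set $v$ (which satisfies $0\le v\le b-1$) is a subsolution of $v_t=d_2v_{xx}-r_2(1-b\varepsilon)v$; running the identical supersolution argument, with $b-1$ in front and $r_2(1-b\varepsilon)$ in place of $r_1c_0$, gives $v(x,t)\to0$ uniformly on $\{x\le(s-\zeta)t\}$, which is the second limit. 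We use $s\le s^*$, resp.\ $s\le s_*$, only to note that for larger $s$ these conclusions are already contained in Theorem~\ref{ssp1}.

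For part~(ii), let $\tau>0$. Since $\alpha\le\alpha(\infty)=1$ and $v\ge0$, $u$ is a subsolution of $u_t=d_1u_{xx}+r_1u$, so $\overline{u}(x,t):=Ae^{-\lambda_1[x-(s^*+\tau/2)t]}$ is a supersolution provided $\lambda_1>0$ satisfies $d_1\lambda_1^2-(s^*+\tau/2)\lambda_1+r_1\le0$, which is possible because $s^*+\tau/2>s^*=2\sqrt{d_1r_1}$; choosing $A$ large enough that $u_0\le Ae^{-\lambda_1 x}$ (recall $u_0\le1$ and $u_0\equiv0$ on $[K,\infty)$) and comparing, $u\le\overline{u}$, hence $\sup_{x\ge(s^*+\tau)t}u(x,t)\le Ae^{-\lambda_1\tau t/2}\to0$. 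The first limit for $v$ is identical: from $u\le1$ we get $v_t\le d_2v_{xx}+r_2(b-1)v$, and we compare with $Be^{-\lambda_2[x-(s_*+\tau/2)t]}$ where $d_2\lambda_2^2-(s_*+\tau/2)\lambda_2+r_2(b-1)\le0$ (possible since $s_*=2\sqrt{d_2r_2(b-1)}$) and $B$ is taken large. For the last limit it suffices to treat the case $s_*>s^*$ (otherwise it is contained in the first one); then, with $u\le\min\{1,Ae^{-\lambda_1[x-(s^*+\tau/2)t]}\}$ from above, $v$ is a subsolution of $V_t=d_2V_{xx}+r_2V[b\min\{1,Ae^{-\lambda_1[x-(s^*+\tau/2)t]}\}-1-V]$, and one checks that $\widehat{v}(x,t):=\min\{b-1,B'e^{-\lambda'[x-(s^*+\tau/2)t]}\}$ is a supersolution once $\lambda'>0$ is small enough that $d_2(\lambda')^2-\lambda'(s^*+\tau/2)<r_2/2$ and then $B'$ is large enough that $bA[(b-1)/B']^{\lambda_1/\lambda'}\le1/2$ and $B'\ge(b-1)e^{\lambda' K}$ (on the flat piece $\widehat{v}=b-1$ the supersolution inequality reduces to $u\le1$). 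Comparison then gives $\sup_{x\ge(s^*+\tau)t}v(x,t)\to0$, the third limit.

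The only point requiring genuine care is the composite supersolution $\widehat{v}$ in part~(ii): the exponential rate $\lambda'$ must be chosen small relative to the decay rate $\lambda_1$ of the prey envelope, and the amplitude $B'$ large, so that on the sloped piece of $\widehat{v}$ the prey feedback $r_2 bu$ stays below $r_2/2$. Everything else is a routine adaptation of \S4, the only genuine difference from Theorem~\ref{sp4} being the simplification in part~(i), where the stronger smoothing effect of the Laplacian removes the need for compactly supported initial data.
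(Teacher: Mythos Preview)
Your proof is correct. Part~(ii) is essentially identical to the paper's own argument: the same exponential supersolutions, the same dispersion relations (now quadratic), and the same composite barrier $\widehat{v}=\min\{b-1,B'e^{-\lambda'[x-(s^*+\tau/2)t]}\}$ with $\lambda'$ small and $B'$ large.

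Part~(i) takes a genuinely different route. The paper simply invokes \cite[Theorem~2.2\,(i)]{lbsf2014} (the local analogue of \cite[Theorem~3.3\,(i)]{lwz18}), remarking that this reference happens not to require vanishing of the data for $x\ll -1$. You instead build an explicit moving supersolution $W(x,t)=e^{\mu[x-(s-\zeta')t]}+e^{-\sigma(t-T_0)}$ on the half-line $\{x\le(s-\zeta')t\}$, using only that $\alpha\le -c_0$ there and that $u\le 1$ provides the lateral boundary condition; then you bootstrap to $v$. This is self-contained and makes transparent \emph{why} the left-support hypothesis present in the nonlocal Theorem~\ref{sp4}\,(i) is no longer needed: the Laplacian is a local operator, so the comparison principle on the moving half-line is legitimate without any information from the region $\{x>(s-\zeta')t\}$ other than the pointwise bound at the interface. (Your phrase ``stronger smoothing effect'' slightly understates the mechanism; locality of the operator is the real point, since in the nonlocal case $\mathcal{N}_1[u]$ would sample values outside the half-line and obstruct this argument.) A small bonus of your approach is that the hypotheses $s\le s^*$, $s\le s_*$ are visibly inert in your proof, which you correctly observe.
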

\begin{proof}
Here, instead of applying \cite[Theorem 3.3 (i)]{lwz18}, we apply \cite[Theorem 2.2 (i)]{lbsf2014} to prove part (i).
Notice that the condition $u_0(x)=0$ for all $-x\gg 1$ is not required in \cite[Theorem 2.2 (i)]{lbsf2014}. The proof of part (ii) is {almost the} same as before.
For the reader's convenience, we provide a detailed argument here.

Let $\tau>0$ be given and let $\lambda_1$ {the smaller} positive root of $$d_1\lambda^2 -(s^*+\tau/2)\lambda+ r_1=0.$$
Then, $\overline{u}(x,t):=A e^{-\lambda_1[x-(s^*+\tau/2)t]}$ is a solution of the following linear equation
$$\overline{u}_t(x,t)=d_1\overline{u}_{xx}(x,t) + r_1\overline{u}(x,t),$$
for any positive constant $A$.

Since $u_0(x)=0$ for $x\ge K$ and $u_0\le 1$, we can choose a positive constant $A$ large enough such that $u_0(x)\le A e^{-\lambda_1 x}$ for all $x\in\bR$.
Then, by the comparison principle,
$$0\le u(x,t)\le \overline{u}(x,t)=A e^{-\lambda_1[x-(s^*+\tau)t]} e^{-\lambda_1 \tau t/2}$$
for $x\in\bR$, $t\ge 0$.
In particular, we have $0\le u(x,t)\le A e^{-\lambda_1 \tau t/2}$ for $x\ge (s^*+\tau)t$ and $t\ge0$.
This implies that $\lim_{t\to\infty}\sup_{x\ge (s^*+\tau)t}u(x,t)=0$.

Next, we compare $v$ with the functions {$\overline{v}_1 := B_1 e^{-\lambda_{21} [x-(s_*+\tau/2)t]}$, where $\lambda_{21}$} is the smaller positive root of
$$d_2\lambda^2 -(s_*+\tau/2)\lambda+ r_2(b-1)=0,$$
and $\overline{v}_2 := \min \left\{ b-1 ,  B_2 e^{- \lambda_{22} [x - (s^* + \tau/2) t]} \right\}$,  where $\lambda_{22}$ is small enough so that
$$d_2 \lambda_{22}^2 - (s^* + \tau) \lambda_{22} - \frac{r_2}{2} < 0,$$
and $B_2$ is a large enough positive constant. {Then, proceeding as in the proof of Theorem~\ref{sp4}, we obtain the desired result for $v$ in (ii).}
The theorem is thereby proved.
\end{proof}

\begin{remark}
As in Remark~\ref{rk1}, for the standard diffusion case no species can survive in the long run without adjusting its habitat.
\end{remark}

Finally, similar to Theorem~\ref{sp3}, we have
\begin{theorem}\label{ssp3} {Let $(u_0,v_0)\in X_1\times X_{b-1}$. Assume that $u_0(x)>0$ on a closed interval and~$v_0$ has a nonempty compact support.
\begin{itemize}
\item If $s_* < s < s^*$, then
\beaa
\lim_{t\to\infty}\left\{\sup_{(s+\ep)t\le x\le (s^*-\ep)t}|u(x,t)-1|\right\}=0
\eeaa
for any $\ep\in(0,(s^*-s)/2)$.
\item If $s \leq s_* < s^*$, then
\beaa
\lim_{t\to\infty}\left\{\sup_{(s_*+\ep)t\le x\le (s^*-\ep)t}|u(x,t)-1|\right\}=0
\eeaa
for any $\ep\in(0,(s^*-s_*)/2)$.
\end{itemize}}
\end{theorem}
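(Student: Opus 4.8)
The plan is to mirror, nearly verbatim, the argument used for Theorem~\ref{sp3} in the nonlocal case, replacing every appeal to the scalar nonlocal shifting-habitat results of~\cite{lwz18} by the corresponding results for the scalar reaction–diffusion equation with a shifting habitat established in~\cite{lbsf2014} (see also~\cite{hz2017} for the forced waves). In particular, the role played by \cite[Theorem 3.3 (iii)]{lwz18} — convergence to the saturation state $1$ in all moving frames with speed strictly between the shift speed and $2\sqrt{d_1 r_1}$ — will be played by its local counterpart in~\cite{lbsf2014}.

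For the first item, assume $s_*<s<s^*$. Since $s>s_*$, \eqref{sv-0} of Theorem~\ref{ssp1} gives, for any $\delta\in(0,1)$, a time $T_1>0$ with $v(x,t)\le\delta/a$ for all $x\in\bR$ and $t\ge T_1$. By the parabolic comparison principle, $u$ is then trapped between the solutions $\underline{u}$ and $\overline{u}$ of the scalar shifting-habitat equations $\underline{u}_t=d_1\underline{u}_{xx}+r_1\underline{u}[\alpha(x-st)-\delta-\underline{u}]$ and $\overline{u}_t=d_1\overline{u}_{xx}+r_1\overline{u}[\alpha(x-st)-\overline{u}]$, both started from $u(\cdot,T_1)$. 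Fixing $\ep\in(0,(s^*-s)/2)$, the local spreading result of~\cite{lbsf2014} applied to $\overline{u}$ yields $\overline{u}\to 1$ uniformly on $\{(s+\ep)t\le x\le(s^*-\ep)t\}$. Applied to $\underline{u}$, whose effective maximal speed is $s^*(-\delta):=2\sqrt{d_1 r_1(1-\delta)}\uparrow s^*$ as $\delta\downarrow 0$, the same result gives $\underline{u}\to 1-\delta$ on a slightly narrower cone whose upper slope still exceeds $s^*-\ep$ once $\delta$ is taken small. Sandwiching $u$ between the two and letting $\delta\to 0$ finishes the first item.

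For the second item, assume $s\le s_*<s^*$. First I would invoke part (ii) of Theorem~\ref{ssp4} to obtain $v(x,t)\le\min\{b-1,\,B e^{-\lambda_{21}[x-(s_*+\tau/2)t]}\}$ for any small $\tau>0$ and some $B,\lambda_{21}>0$. Since $s\le s_*$ and $\alpha$ is nondecreasing, $\alpha(x-st)\ge\alpha(x-(s_*+\tau/2)t)$ for $t>0$, so that $\alpha(x-st)-av(x,t)\ge\widehat\alpha(x-(s_*+\tau/2)t)$ where $\widehat\alpha(\xi):=\alpha(\xi)-a\min\{b-1,B e^{-\lambda_{21}\xi}\}$; one checks readily that $\widehat\alpha$ is nondecreasing, $\widehat\alpha(-\infty)=\alpha(-\infty)-a(b-1)<0$ and $\widehat\alpha(+\infty)=1$, i.e.\ $\widehat\alpha$ satisfies $(\alpha 1)$–$(\alpha 2)$. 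Hence $u$ is a supersolution of the scalar shifting-habitat equation with reaction $r_1 u[\widehat\alpha(x-(s_*+\tau/2)t)-u]$, and the local spreading result of~\cite{lbsf2014} gives $\liminf_{t\to\infty}\inf_{(s_*+\tau)t\le x\le(s^*-\tau)t}u(x,t)\ge 1$; together with $u\le 1$ this is the claim.

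The only points demanding a moment's attention are the verification that the auxiliary profile $\widehat\alpha$ inherits the monotonicity and sign conditions $(\alpha 1)$–$(\alpha 2)$, and the check that the scalar spreading theorem of~\cite{lbsf2014} is available in exactly the required form (convergence to $1$ in every moving frame of speed strictly between the shift speed and $2\sqrt{d_1 r_1}$, for a general nondecreasing $\alpha$). Both are already implicitly used in the nonlocal proof of Theorem~\ref{sp3}, so — in contrast with Theorem~\ref{svpos} — no genuinely new ingredient is needed here, and the proof may safely be left to the reader up to these substitutions.
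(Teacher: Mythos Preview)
Your proposal is correct and follows exactly the approach the paper intends: the paper's own proof of Theorem~\ref{ssp3} consists of a single sentence, namely ``apply \cite[Theorem 2.2 (iii)]{lbsf2014} in place of \cite[Theorem 3.3 (iii)]{lwz18}'', and you have faithfully unpacked the argument of Theorem~\ref{sp3} with precisely that substitution. Your explicit construction of $\widehat\alpha$ in the second item is in fact more detailed than what the paper provides.
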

\begin{proof}
Here, instead of applying \cite[Theorem 3.3 (iii)]{lwz18}, we apply \cite[Theorem 2.2 (iii)]{lbsf2014} to prove the theorem.
\end{proof}
%

Now we turn our attention to the case when the changing speed is smaller than both $s^*$ and $s_*$, where we manage to improve our results compared with the nonlocal case. Hereafter we assume that $b>1$ and
$$s < \uns := \min \{ s_* , s^* \},$$
and we will prove Theorem~\ref{svpos}. In the next two subsections, we will deal consecutively with $u$ and $v$.
\subsection{{Survival of the prey}} Our proof is inspired by the methods in~\cite{dggs2021,dgm}. The first step is to investigate the supremum limit of solutions in the intermediate moving frames.
\begin{lemma}\label{lem1}
Assume that $s <  \uns$. Then for any $c \in (s, \uns)$ there exists $\delta_1 (c) >0$ such that, for any initial data satisfying $(u_0,v_0)\in X_1\times X_{b-1}$ with $u_0\not\equiv 0$ and $v_0 \not \equiv 0$,
the corresponding solution $(u,v)$ of \eqref{spp} satisfies
$$\limsup_{t \to +\infty} {u}(ct,t)  \geq \delta_1 (c).$$
\end{lemma}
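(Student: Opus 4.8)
\emph{Outline.} The plan is to argue by contradiction. Suppose the statement fails for some $c\in(s,\uns)$: then for each $n\in\BbN$ there is initial data $(u_{0,n},v_{0,n})\in X_1\times X_{b-1}$ with $u_{0,n}\not\equiv0$ such that the corresponding solution $(u_n,v_n)$ of \eqref{spp} satisfies $\limsup_{t\to\infty}u_n(ct,t)<1/n$; hence one may choose $T_n\ge n$ with
\[
u_n(ct,t)<1/n\qquad\text{for all }t\ge T_n .
\]
I would then move to the frame of speed $c$ by setting $\hat u_n(x,t):=u_n(x+ct,t)$ and $\hat v_n(x,t):=v_n(x+ct,t)$, so that $\hat u_n(0,t)<1/n$ for $t\ge T_n$ and $(\hat u_n,\hat v_n)$ solves the system obtained from \eqref{spp} by adding the drift terms $c\,\partial_x\hat u_n$, $c\,\partial_x\hat v_n$ and replacing $\alpha(x-st)$ by $\alpha(x+(c-s)t)$; since $c>s$ this heterogeneity tends to $\alpha(\infty)=1$ as $t\to\infty$, uniformly on bounded $x$-sets.

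\emph{Step 1: the densities become tiny in a whole moving tube.} Since $\hat u_n\ge0$ solves a linear parabolic equation $\partial_t\hat u_n=d_1\partial_{xx}\hat u_n+c\,\partial_x\hat u_n+g_n\hat u_n$ whose zero-order coefficient $g_n=r_1(\alpha(x+(c-s)t)-\hat u_n-a\hat v_n)$ is bounded uniformly in $n$ (here $0\le\hat u_n\le1$, $0\le\hat v_n\le b-1$ and $\alpha$ is bounded), the parabolic Harnack inequality — with a constant depending only on $d_1$, $c$, $\|g_n\|_\infty$ and the cylinder size — propagates the smallness of $\hat u_n(0,\cdot)$ to all of $\{|x|\le R\}$: for every $R>0$ there is $C(R)$, independent of $n$, with $\hat u_n(x,t)\le C(R)/n$ for $|x|\le R$ and $t\ge T_n+\tau(R)$, $\tau(R)$ being the Harnack time lag. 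For $n$ large this gives $b\hat u_n\le1/2$ in the tube, hence there $\partial_t\hat v_n\le d_2\partial_{xx}\hat v_n+c\,\partial_x\hat v_n-\tfrac{r_2}{2}\hat v_n$; comparing with the stationary profile $W_R$ on $(-R,R)$ solving $d_2W_R''+cW_R'-\tfrac{r_2}{2}W_R=0$, $W_R(\pm R)=b-1$ (so $W_R\ge0$, and $W_R\to0$ uniformly on $[-R/2,R/2]$ as $R\to\infty$) together with the transient $(b-1)e^{-r_2(t-\cdot)/2}$, one obtains, for any prescribed $\varepsilon>0$ and $R$ large enough, that $\hat u_n+\hat v_n\le\varepsilon$ on $\{|x|\le R/2\}$ for $t$ large.

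\emph{Step 2: a Fisher--KPP subsolution for the prey.} Fix $\delta,\varepsilon$ small and $R$ large. Because $c>s$, for $t$ large one has $\alpha(x+(c-s)t)\ge1-\delta$ on $|x|\le R/4$, so in that moving tube
\[
\partial_t\hat u_n\ \ge\ d_1\partial_{xx}\hat u_n+c\,\partial_x\hat u_n+r_1\hat u_n(\kappa-\hat u_n),\qquad \kappa:=1-\delta-(1+a)\varepsilon .
\]
Since $c<s^*=2\sqrt{d_1r_1}$ (which holds as $c<\uns\le s^*$), we have $r_1-\tfrac{c^2}{4d_1}>0$; choosing $\delta,\varepsilon$ small and then $R$ large makes the principal Dirichlet eigenvalue of $d_1\partial_{xx}+c\,\partial_x+r_1\kappa$ on $(-R/4,R/4)$ positive (compute it via the weight $e^{-cx/(2d_1)}$), so the monostable problem with drift has a unique positive steady state $\underline U$, attracting from any positive datum. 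As $u_n(\cdot,t)>0$ everywhere for $t>0$, the restriction of $\hat u_n$ to $(-R/4,R/4)$ dominates the solution of that Dirichlet problem started below it, whence $\liminf_{t\to\infty}u_n(ct,t)=\liminf_{t\to\infty}\hat u_n(0,t)\ge\underline U(0)=:\delta_1(c)$. Since $\delta_1(c)>0$ depends only on $c$ and the model parameters, taking $n>1/\delta_1(c)$ contradicts $\limsup_{t\to\infty}u_n(ct,t)<1/n$, and the lemma follows.

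\emph{Main difficulty.} The delicate point is Step~1's passage from ``prey scarce in the tube'' to ``predator small in the tube, uniformly in $t$''. Since \eqref{spp} carries no comparison principle for the coupled pair, this cannot be read off a single scalar inequality; it works only because the predator has a strictly negative net growth rate wherever the prey is scarce, so that $\hat v_n$ can be sustained in the tube only through influx across $\{|x|=R\}$, and such influx is confined to a boundary layer whose effect at the centre — controlled by $W_R(0)\to0$ — is independent of $n$. Everything else (the Harnack upgrade, the eigenvalue computation, the attractivity of $\underline U$, and the requirement $T_n\ge n$ so that $\delta_1(c)$ does not degenerate with shrinking data) is routine; alternatively one could phrase Step~1 by extracting a limiting entire solution of the homogeneous advective system, as in \cite{dggs2021,dgm}, which is the variant better suited to the finer persistence and convergence results that follow.
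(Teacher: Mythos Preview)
Your proof is correct and follows the same overall architecture as the paper: argue by contradiction, show that both $u_n$ and $v_n$ become uniformly small in a moving tube of width $2R$, then use $c<s^*$ to build a Dirichlet-type subsolution for the prey that forces $u_n(ct,t)$ to stay bounded below. The implementation, however, differs in both steps. For Step~1, the paper obtains smallness of $u_n$ and $v_n$ in the tube by a compactness argument: it extracts an entire-in-time limit $(u_\infty,v_\infty)$ of~\eqref{eq:alpha=1} with $u_\infty(0,0)=0$, invokes the strong maximum principle to force $u_\infty\equiv0$, and then uses the negative growth of the predator to rule out $v_\infty>0$. You instead propagate the pointwise smallness of $\hat u_n(0,\cdot)$ by the parabolic Harnack inequality, and then control $\hat v_n$ by the explicit stationary barrier $W_R$ on $(-R,R)$; this is more hands-on and avoids the limiting procedure, at the cost of requiring the boundary-layer estimate $W_R(0)\to0$. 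For Step~2, the paper uses an exponentially \emph{growing} subsolution $A e^{r_1\delta t}e^{-c(x-ct)/(2d_1)}\varphi_R(x-ct)$, which directly contradicts boundedness; you instead invoke the positive Dirichlet steady state $\underline U$ and its attractivity, which yields an explicit candidate for $\delta_1(c)$. Both routes are valid; as you yourself note, the paper's compactness variant dovetails more naturally with the later lemmas (Lemma~\ref{lem2} and Theorems~\ref{spprey}--\ref{sppred}), where one repeatedly passes to entire solutions of~\eqref{eq:alpha=1}.
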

Let us point out that the same result holds (with $s= - \uns$) for solutions of the homogeneous predator-prey model
\begin{equation}\label{eq:alpha=1}
\begin{cases}
u_{t} =d_{1} u_{xx}+r_{1}u (1-u-av),\\
v_{t} =d_{2} v_{xx} +r_{2}v (-1+bu-v),
\end{cases}
\end{equation}
which is the same as~\eqref{spp} with instead $\alpha \equiv 1$. Since such a result is also needed for our argument, we state it below and refer to~\cite{dgm} for the details. As a matter of fact, our proof of Lemma~\ref{lem1} below roughly follows the same argument.
\begin{lemma}[\cite{dgm}; Lemma~5.2]\label{lem0}
For any $c \in [0, \uns)$ there exists $\delta'_1 (c) >0$ such that, for any initial data satisfying $(u_0,v_0)\in X_1\times X_{b-1}$ with $u_0\not\equiv 0$ and $v_0 \not \equiv 0$,
the corresponding solution $(u,v)$ of \eqref{eq:alpha=1} satisfies
$$\limsup_{t \to +\infty} {u}(ct,t)  \geq \delta'_1 (c).$$
\end{lemma}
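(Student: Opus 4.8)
The plan is to establish Lemma~\ref{lem0} as a uniform persistence statement for the semiflow generated by \eqref{eq:alpha=1} in the frame moving at speed $c$, appealing to abstract persistence theory of Hale--Waltman / Thieme / Magal--Zhao type (this is essentially what is carried out in \cite{dgm}). Setting $y=x-ct$, a solution $(u,v)$ of \eqref{eq:alpha=1} gives rise to $(\tilde u,\tilde v)(y,t):=(u,v)(y+ct,t)$, which solves the \emph{autonomous} system
\begin{equation*}
\tilde u_t=d_1\tilde u_{yy}+c\,\tilde u_y+r_1\tilde u(1-\tilde u-a\tilde v),\qquad \tilde v_t=d_2\tilde v_{yy}+c\,\tilde v_y+r_2\tilde v(-1+b\tilde u-\tilde v),
\end{equation*}
on the invariant state space $\mathcal X:=X_1\times X_{b-1}$ endowed with the topology of locally uniform convergence. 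By interior parabolic estimates together with the a priori bounds $0\le u\le1$, $0\le v\le b-1$, this semiflow is continuous, eventually compact and point dissipative on $\mathcal X$. It then suffices to prove uniform persistence of the semiflow, over initial data with $u_0\not\equiv0$ and $v_0\not\equiv0$, with respect to (say) the functional $p(\tilde u,\tilde v):=\min\{\min_{|y|\le1}\tilde u(y),\ \min_{|y|\le1}\tilde v(y)\}$. Indeed, uniform persistence produces a constant $\delta'_1(c)>0$ depending on $c$ alone with $\liminf_{t\to\infty}\tilde u(0,t)\ge\delta'_1(c)$, i.e.\ $\liminf_{t\to\infty}u(ct,t)\ge\delta'_1(c)$, which is stronger than the asserted conclusion.

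To apply the abstract theorem I would verify its hypotheses as follows. By the strong maximum principle, if $u_0\not\equiv0$ and $v_0\not\equiv0$ then $\tilde u(\cdot,t),\tilde v(\cdot,t)>0$ for all $t>0$, so the forward-invariant part of the extinction set $\{p=0\}$ is $\{\tilde u\equiv0\}\cup\{\tilde v\equiv0\}$; on $\{\tilde u\equiv0\}$ one has $\tilde v_t\le d_2\tilde v_{yy}+c\tilde v_y-r_2\tilde v$, so $\tilde v\to0$, while on $\{\tilde v\equiv0\}$ the equation for $\tilde u$ is the scalar homogeneous KPP equation with drift, which drives $\tilde u\to1$ locally uniformly since $c<s^*$. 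Hence the only invariant sets in the extinction set are the equilibria $(0,0)$ and $(1,0)$, and they form an acyclic chain $(0,0)\to(1,0)$. The remaining ``uniform weak repelling'' conditions are: at $(0,0)$, the linearized prey equation $w_t=d_1w_{yy}+cw_y+r_1w$ admits exponentially growing solutions, which holds precisely when $c<2\sqrt{d_1r_1}=s^*$; and at $(1,0)$, the linearized predator equation $z_t=d_2z_{yy}+cz_y+r_2(b-1)z$ admits exponentially growing solutions, which holds precisely when $c<2\sqrt{d_2r_2(b-1)}=s_*$. Since $c<\uns=\min\{s^*,s_*\}$, both conditions hold with rates uniform in the admissible data, so uniform persistence follows. (These repelling estimates can equivalently be obtained by hand, by testing against a compactly supported subsolution of the relevant linear equation, in the spirit of the $\mu$-function construction used in the proof of Theorem~\ref{vpos}.) The hypothesis $b>1$ is used exactly here, to make $r_2(b-1)>0$ so that $s_*>0$ and $(1,0)$ genuinely repels in the $\tilde v$-direction when $c<s_*$; note that no restriction such as $ab<1$ is needed, since the argument never locates the coexistence state, only requires the orbit to stay away from $(0,0)$ and $(1,0)$.

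The main obstacle is that the abstract uniform-persistence machinery is classically formulated for semiflows on complete metric spaces possessing a global attractor (or at least point dissipative, asymptotically smooth ones), whereas here the spatial domain $\R$ is unbounded; the required compactness and dissipativity must therefore be set up directly in the local-uniform topology using interior Schauder/$L^p$ estimates and the $L^\infty$ a priori bounds — routine in principle, but this is where most of the technical work lies. A secondary delicate point is establishing that $(0,0)$ and $(1,0)$ are isolated and acyclic within the extinction set, which reduces to the (classical) description of the large-time behaviour of the scalar homogeneous KPP equation with drift at speeds below its spreading speed; and one must be careful that the weak-repelling bounds are uniform over \emph{all} data with $u_0\not\equiv0$, $v_0\not\equiv0$, since it is precisely this uniformity that yields a threshold $\delta'_1(c)$ depending only on $c$ rather than on the initial condition.
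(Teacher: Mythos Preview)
The paper does not actually prove this lemma: it is quoted from \cite{dgm}, and the authors explicitly ``refer to~\cite{dgm} for the details.'' However, they remark that their proof of the closely analogous Lemma~\ref{lem1} ``roughly follows the same argument,'' so the intended proof can be read off from that one. That argument is a direct contradiction: assume a sequence of solutions with $\limsup_{t\to\infty}u_n(ct,t)\to 0$; pass to limits via parabolic estimates to show that both $u_n$ and $v_n$ become uniformly small on $\{|x-ct|\le R,\ t\ge t_n\}$; then, using $c<s^*$, place a compactly supported Dirichlet eigenfunction subsolution $\underline u(x,t)=Ae^{r_1\delta t}e^{-\frac{c}{2d_1}(x-ct)}\varphi_R(x-ct)$ under $u_n$, whose unbounded growth contradicts $u_n\le 1$.

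Your route through abstract uniform persistence (Hale--Waltman/Thieme/Magal--Zhao) is genuinely different from this, and your parenthetical remark that ``this is essentially what is carried out in \cite{dgm}'' appears to be inaccurate. Your outline is internally coherent and, if completed, would even yield the stronger $\liminf$ conclusion (i.e.\ the homogeneous analogue of Lemma~\ref{lem2}) in one stroke; the identification of the boundary dynamics $\{(0,0),(1,0)\}$, the acyclicity, and the linear instability conditions $c<s^*$ and $c<s_*$ are all correct. The trade-off is exactly the obstacle you flag: on $\R$ with the locally uniform topology, verifying that the semiflow fits the hypotheses of the abstract theorem---in particular that $(0,0)$ and $(1,0)$ are \emph{isolated invariant sets in the full space} (not merely in the boundary set $M_\partial$), and that the weak-repelling estimates are uniform over all admissible data---is where essentially all the work lies, and is not lighter than the direct argument. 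The paper's hands-on approach sidesteps this by localizing to a moving window of fixed width $2R$ and using only a single eigenvalue computation, at the cost of obtaining $\limsup$ first and then upgrading to $\liminf$ in a separate step (Lemma~\ref{lem2}).
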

\begin{proof}[Proof of Lemma~\ref{lem1}]
We assume by contradiction that there exists a sequence of initial data $(u_{0,n}, v_{0,n})$ in $X_1 \times X_{b-1}$ such that, for all $n$,
$$u_{0,n} \not \equiv 0, \quad v_{0,n} \not \equiv 0,$$
and
$$\lim_{n \to +\infty} \limsup_{t \to +\infty} u_n (ct,t) = 0.$$
Here $(u_n,v_n)$ naturally denotes the solution of \eqref{spp} associated with $(u_{0,n}, v_{0,n})$. Then we can choose a time sequence $t_n \to +\infty$ as $n\to\infty$ such that
$$\lim_{n \to +\infty} \sup_{ t \geq t_n} u_n ( ct, t) = 0.$$
Then we claim that also, for any $R >0$,
\begin{equation}\label{new_limun_pos}
\lim_{n \to +\infty} \sup_{ |x| \leq R, t \geq t_n} u_n (x + ct, t) =0.
\end{equation}
Indeed, assume by contradiction that there exist sequences $x_n \in [-R,R]$ and $t'_n \geq t_n$ such that
$$
\liminf_{n \to +\infty} u_n (x_n + c t'_n, t'_n) > 0.
$$
By standard parabolic estimates, we may assume up to extraction of a subsequence that
$$(u_n ,v_n) (x + ct'_n  , t+ t'_n) \to (u_\infty ,v_\infty) (x,t),$$
where the convergence is understood in the locally uniform sense, and $(u_\infty, v_\infty)$ is an entire in time solution of system \eqref{eq:alpha=1},
due to $c > s$. Moreover, by construction we have on the one hand that $u_\infty (0,0) = 0$, hence
$$u_\infty \equiv 0,$$
by the strong maximum principle. Yet on the other hand, we also have up to extraction of another subsequence that $x_n \to x_\infty \in [-R,R]$ and then that $u_\infty (x_\infty, 0) >0$, a contradiction. We conclude that \eqref{new_limun_pos} holds.

In the same way we can prove that
\begin{equation}\label{new_limvn_pos}
\lim_{n \to +\infty} \sup_{ |x| \leq R, t \geq t_n} v_n (x + ct, t) =0.
\end{equation}
Indeed, if not then we find an entire in time solution $(u_\infty,v_\infty)$ of~\eqref{eq:alpha=1} with $u_\infty \equiv 0$ and $v_\infty > 0$. Then
$$(v_{\infty})_{t}  = d_{2} (v_{\infty})_{xx}  + r_2 v_\infty {(-1 - v_\infty ),}$$
for all $t \in \R$ and $x \in \R$. Since $v_\infty$ is also uniformly bounded from above by $b-1$ by construction, one may infer that $v_\infty \equiv 0$, another contradiction. The claim~\eqref{new_limvn_pos} is now proved.

Now define $\lambda_R := - \frac{\pi^2}{4R^2} $ and $\varphi_R (x) := \cos \left( \frac{\pi}{2R} x \right)$ the solution of the principal eigenvalue problem
\begin{equation*}
\begin{cases}
d_1 ( \varphi_{R})_{xx} = \lambda_R \varphi_R, \  x \in (-R,  R), \\
\varphi_{R} \left( \pm R \right) = 0, \quad \varphi_R >0 \ \mbox{ in } (-R,R).
\end{cases}
\end{equation*}
Hereafter we extend $\varphi_R$ by $0$ outside the interval $[-R,R]$. Next, thanks to $c < s_*$, we can choose $\delta>0$ small enough so that
$$ \frac{c^2}{4d_1} < r_1 (1- 2 \delta) ,$$
and $R$ large enough so that
$$- \lambda_R < r_1 (1-2 \delta) - \frac{c^2}{4d_1} .$$
Then, by \eqref{new_limun_pos} and \eqref{new_limvn_pos}, we have that there exists $n$ large enough such that
$$(u_{n})_{t} (x,t) \geq d_1 (u_{n})_{xx} (x,t) + r_1 (1-\delta) u_n (x,t),$$
for all $t \geq t_n$ and $x \in (ct - R, ct +R)$. On the other hand, by a straightforward computation the function
$$\underline{u} (x,t) := A e^{r_1 \delta t} e^{-\frac{c}{2 d_1}( x-ct)} \varphi_R (x-ct)$$
satisfies, for any choice of $A>0$,
$$\underline{u} (x,t) \leq d_1 \underline{u} (x,t) + r_1 (1-\delta)\underline{u} (x,t),$$
in the whole domain $\R \times \R$. Taking $A$ small enough so that $u_n (t=t_n) \geq \underline{u} (t=t_n)$, we get by the comparison principle that
$$u_n (x,t) \geq \underline{u} (x,t)$$
for all $t \geq t_n$ and $x \in \R$. However $\underline{u} (ct,t) \to +\infty$ as $t \to +\infty$, which contradicts the boundedness of $u_n$. Finally we conclude that Lemma~\ref{lem1} holds true.
%
%
\end{proof}
Next, we improve the previous lemma by showing that the infimum limits are also positive.
\begin{lemma}\label{lem2}
Assume that $s <  \uns$. Then for any $c \in (s, \uns)$ there exists $\delta_2 (c) >0$ such that, for any initial data satisfying $(u_0,v_0)\in X_1\times X_{b-1}$ with $u_0\not\equiv 0$ and $v_0 \not \equiv 0$,
the corresponding solution $(u,v)$ of \eqref{spp} satisfies
$$\liminf_{t \to +\infty} {u}(ct,t)  \geq \delta_2 (c).$$
\end{lemma}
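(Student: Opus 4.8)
The plan is to argue by contradiction, upgrading the ``$\limsup$'' bound of Lemma~\ref{lem1} into a ``$\liminf$'' bound by means of parabolic compactness, the strong maximum principle, and the uniform spreading estimate of Lemma~\ref{lem0}. Suppose the conclusion fails; since the claim concerns the behaviour as $t\to+\infty$, we may then find data $(u_{0,n},v_{0,n})\in X_1\times X_{b-1}$ with $u_{0,n}\not\equiv0$, $v_{0,n}\not\equiv0$, and times $\tau_n\to+\infty$, such that the corresponding solutions $(u_n,v_n)$ of~\eqref{spp} satisfy $u_n(c\tau_n,\tau_n)\to0$. After shrinking the constant of Lemma~\ref{lem1}, we may also assume $\delta_1(c)\le\delta_1'(c)$, with $\delta_1'(c)$ the constant of Lemma~\ref{lem0}.

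The engine of the argument is that, since $c>s$, along the moving frame $x\simeq ct$ one has $x-st=(c-s)t+O(1)\to+\infty$, so the shifting coefficient satisfies $\alpha(x-st)\to\alpha(\infty)=1$ there. Hence, translating a solution by $(cT_n,T_n)$ with $T_n\to+\infty$ and invoking interior parabolic estimates, every locally uniform limit is an entire (defined for all $t\in\bR$) solution $(U,V)$ of the homogeneous system~\eqref{eq:alpha=1} with $0\le U\le1$, $0\le V\le b-1$; moreover, by the strong maximum principle, if $U$ vanishes at one point then $U\equiv0$, and then $V\equiv0$ as well by a direct comparison for the scalar equation governing $V$. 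By Lemma~\ref{lem1}, for each large $n$ there is a first time $\beta_n>\tau_n$ with $u_n(c\beta_n,\beta_n)=\delta_1(c)/2$ and $u_n(ct,t)<\delta_1(c)/2$ on $[\tau_n,\beta_n)$. If $\beta_n-\tau_n\to L$ along a subsequence, translating by $(c\tau_n,\tau_n)$ yields a limit $(U,V)$ of~\eqref{eq:alpha=1} with $U(0,0)=0$, hence $U\equiv0$, while at the same time $U(cL,L)=\lim u_n(c\beta_n,\beta_n)=\delta_1(c)/2>0$, a contradiction. So we may assume $\beta_n-\tau_n\to+\infty$.

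Let $\alpha_n\le\tau_n$ denote the last ``high time'' (where $u_n(c\cdot,\cdot)=\delta_1(c)/2$) before $\tau_n$, or $\alpha_n=0$ if none, so that $u_n(ct,t)<\delta_1(c)/2$ on $(\alpha_n,\beta_n)$, an interval of length tending to $+\infty$. If $\alpha_n\to+\infty$, we translate by $(c\alpha_n,\alpha_n)$: the limit $(U,V)$ of~\eqref{eq:alpha=1} has $U(0,0)=\delta_1(c)/2>0$, hence $U>0$ everywhere. When $\tau_n-\alpha_n$ is bounded this contradicts $U(cL',L')=\lim u_n(c\tau_n,\tau_n)=0$; when $\tau_n-\alpha_n\to+\infty$ we get $U(ct,t)\le\delta_1(c)/2$ for all $t\ge0$, and applying Lemma~\ref{lem0} to $(U,V)$ on $\{t\ge0\}$, whose data at $t=0$ is nontrivial, forces $\limsup_{t\to+\infty}U(ct,t)\ge\delta_1'(c)\ge\delta_1(c)>\delta_1(c)/2$, again a contradiction.

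The remaining case — $\alpha_n$ bounded while $\beta_n-\tau_n\to+\infty$ — is the one I expect to be the main obstacle, since only a \emph{backward} half-line can be pushed to infinity. Translating by $(c\beta_n,\beta_n)$ produces a positive bounded entire solution $(U,V)$ of~\eqref{eq:alpha=1} with $U(ct,t)\le\delta_1(c)/2$ \emph{only} for $t\le0$, so Lemma~\ref{lem0}, which concerns forward times, does not apply directly. What must be proved is a Liouville-type rigidity: no positive bounded entire solution of the homogeneous prey-predator system~\eqref{eq:alpha=1} can stay below a small threshold in a moving frame of speed $c<\uns$ as $t\to-\infty$. Heuristically this is because, being slower than $\uns\le s^*$, such a frame lies behind the prey invasion, where the homogeneous dynamics converges to the co-existence state $(u_*,v_*)$; rigorously it should be obtained by the compactness/entire-solution method of~\cite{dgm}, by classifying the backward limits of $(U(\cdot,t),V(\cdot,t))$ as $t\to-\infty$ — each again an entire solution of~\eqref{eq:alpha=1}, which by Lemma~\ref{lem0} cannot be uniformly small along the cone — thereby forcing $\liminf_{t\to-\infty}U(ct,t)$ to be bounded below by a positive constant depending only on the parameters, which (after shrinking $\delta_1(c)$ further if needed) contradicts $U(ct,t)\le\delta_1(c)/2$. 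Collecting all cases yields the sought contradiction and proves Lemma~\ref{lem2}; the same scheme, now fed with this lemma as the lower bound on the prey, will subsequently give the corresponding persistence estimate for the predator $v$.
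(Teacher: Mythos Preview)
Your overall strategy---contradiction, extraction of an entire solution of the homogeneous system~\eqref{eq:alpha=1} via parabolic compactness, and appeal to Lemma~\ref{lem0}---matches the paper's. But two points need repair.

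\textbf{The ``remaining case'' is avoidable.} You identify the case $\alpha_n$ bounded as the main obstacle and offer only a heuristic backward-time Liouville argument for it. In fact this case never occurs if you choose the sequences in the right order. By Lemma~\ref{lem1}, each $u_n$ has \emph{arbitrarily large} times at which $u_n(ct,t)\ge\delta_1(c)/2$; since the contradiction hypothesis also gives arbitrarily large ``low'' times, you may \emph{first} fix a high time $t'_n$ with $t'_n\to+\infty$, and \emph{then} choose your bad time $\tau_n>t'_n$ with $u_n(c\tau_n,\tau_n)\to0$. This forces your last high time $\alpha_n\ge t'_n\to+\infty$ automatically. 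The paper proceeds exactly this way (with $\tau_n$ playing the role of your $\alpha_n$), so no backward-time rigidity is needed, and your auxiliary sequence $\beta_n$ becomes superfluous.

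\textbf{Lemma~\ref{lem0} needs $v_0\not\equiv0$.} In the step where you translate at $\alpha_n$ with $\tau_n-\alpha_n\to+\infty$ and obtain an entire solution $(U,V)$ of~\eqref{eq:alpha=1} with $U(0,0)=\delta_1(c)/2$ and $U(ct,t)\le\delta_1(c)/2$ for all $t\ge0$, you invoke Lemma~\ref{lem0} directly. But that lemma requires the $v$-component to be nontrivial, and the limit $V$ may well vanish identically. The paper splits here: if $V\not\equiv0$, Lemma~\ref{lem0} gives the contradiction; if $V\equiv0$, then $U$ is a nontrivial bounded solution of the scalar Fisher--KPP equation, and since $c<\uns\le s^*$, the hair-trigger effect of Aronson--Weinberger forces $U(ct,t)\to1$, contradicting $U(ct,t)\le\delta_1(c)/2$.
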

\begin{proof}
We again proceed by contradiction, and assume that there is a sequence of initial data $(u_{0,n}, v_{0,n}) \in X_1 \times X_{b-1}$ with $u_{0,n} \not \equiv 0 \not \equiv v_{0,n}$, and a time sequence $t_n \to +\infty$ such that the associated solution $(u_n,v_n)$ satisfies
$$\lim_{n \to +\infty} u_n (ct_n, t_n) = 0.$$
According to Lemma~\ref{lem1}, we know that there also exists another sequence $t'_n \to +\infty$ such that
$$u_n (ct'_n,t'_n) \geq \frac{\delta_1 (c)}{2} ,$$
and without loss of generality we can choose it so that $t'_n < t_n$ for any $n$. Now define
$$\tau_n := \sup \left\{ t  \geq t'_n \mid  u_n (c t,t) \geq \frac{\delta_1 (c)}{2} \right\}.$$
It immediately follows that
$$\forall t \in (\tau_n , t_n), \quad u_n (ct, t) \leq \frac{\delta_1 (c)}{2}.$$
Moreover, by standard parabolic estimates, one can extract a subsequence so that $(u_n,v_n) (x+ ct_n , t+  t_n)$ converges locally uniformly to an entire in time solution of \eqref{eq:alpha=1}. By construction and the strong maximum principle, its $u$-component must be identically equal to 0. In particular, if $t_n - \tau_n$ admits a bounded subsequence, then
$$u_n (c \tau_n, \tau_n) \to 0,$$
as $n \to +\infty$, which contradicts the fact that
$$u_n (c \tau_n, t_n) = \frac{\delta_1 (c)}{2},$$
for $n$ large enough. Thus we obtain that
$$t_n - \tau_n \to +\infty,$$
as $n \to +\infty$.

This allows us to extract a subsequence so that
$$(u_n,v_n) ( \cdot + c \tau_n , \cdot + \tau_n) \to (u_\infty, v_\infty)$$
where the convergence is understood in the locally uniform sense, the pair $(u_\infty, v_\infty)$ is an entire in time solution of \eqref{eq:alpha=1}, and by construction
$$u_\infty (0,0) = \frac{\delta_1 (c)}{2},$$
\begin{equation}\label{u_impossibly_small}
u_\infty (ct,t) \leq \frac{\delta_1 (c)}{2},
\end{equation}
for all $t \geq 0$.

Notice that, when $v_\infty \not \equiv 0$, this entire solution immediately contradicts Lemma~\ref{lem0} above. Now it remains to consider the case when $v_\infty \equiv 0$. Then
$$(u_{\infty})_t =d_{1} (u_{\infty})_{xx} +r_{1}u_\infty  [1-u_\infty],$$
which is the standard scalar reaction-diffusion equation of the KPP type. Recall also that $c < s^*$. Then, by a result of Aronson and Weinberger~\cite{aw75}, due to $u_\infty (t= 0 ) \geq \not \equiv 0$ and the so-called hair-trigger effect, it is known that
$$u_\infty (ct,t) \to 1,$$
as $t \to +\infty$, which again contradicts~\eqref{u_impossibly_small}.
{The lemma is thereby proved.}
\end{proof}
We can finally turn to some uniform lower estimate for the prey in the intermediate range between the moving frames with speeds $s$ and $\uns = \min \{ s_* , s^*\}$.
\begin{theorem}\label{spprey}
Assume that $(u_0,v_0) \in X_1 \times X_{b-1}$ and that both $u_0$ and $v_0$ have nonempty compact supports.  {If $s < \uns$,} then for any $\eta\in(0,(\uns -s)/2)$ there is a positive constant
{$\kappa_1$} (independent of $(u_0,v_0)$) such that
\be\label{sprey-lower}
\liminf_{t \to +\infty} \left\{\inf_{(s+\eta)t \leq x\leq ( \uns - \eta) t} u(x,t) \right\}\ge\kappa_1.
\ee
\end{theorem}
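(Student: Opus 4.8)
The plan is to argue by contradiction, in the same spirit as the proof of Lemma~\ref{lem2}, combining Lemmas~\ref{lem1}, \ref{lem2} and~\ref{lem0} with a compactness argument borrowed from~\cite{dggs2021,dgm}. If \eqref{sprey-lower} were false for every $\kappa_1>0$, then --- since the constants produced in Lemmas~\ref{lem1} and~\ref{lem2} do not depend on the initial data --- there would exist initial data $(u_{0,n},v_{0,n})\in X_1\times X_{b-1}$ with $u_{0,n}\not\equiv0\not\equiv v_{0,n}$, times $t_n\to+\infty$, and points $x_n\in[(s+\eta)t_n,(\uns-\eta)t_n]$ such that the associated solutions $(u_n,v_n)$ of~\eqref{spp} satisfy $u_n(x_n,t_n)\to0$. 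Set $c_n:=x_n/t_n\in[s+\eta,\uns-\eta]$; after extraction we may assume $c_n\to c_\infty\in[s+\eta,\uns-\eta]\subset(s,\uns)$. Note moreover that the constant $\delta_1(c)$ of Lemma~\ref{lem1} can be taken uniform for $c$ in a fixed neighbourhood of $c_\infty$, since its proof relies only on $c<s^*$ and on parabolic estimates whose constants are uniform once $c$ is confined to a compact subset of $(s,\uns)$; call $\delta_1>0$ this uniform value.

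First I would single out a recent time at which the prey is not small along the ray of speed $c_n$. By this uniform version of Lemma~\ref{lem1}, for each large $n$ there is $t'_n\in(0,t_n)$ with $t'_n\to+\infty$ and $u_n(c_nt'_n,t'_n)\ge\delta_1/2$, while $u_n(c_nt_n,t_n)<\delta_1/2$ eventually. Define
$$\tau_n:=\sup\{t\in[t'_n,t_n]:u_n(c_nt,t)\ge\delta_1/2\},$$
so that $t'_n\le\tau_n<t_n$, $u_n(c_n\tau_n,\tau_n)\ge\delta_1/2$, and $u_n(c_nt,t)\le\delta_1/2$ for all $t\in(\tau_n,t_n]$. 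Writing $\theta_n:=t_n-\tau_n$, the key claim is that $\theta_n\to+\infty$. If not, then along a subsequence $\theta_n$ stays bounded; by interior parabolic estimates and the a priori bounds $0\le u_n\le1$, $0\le v_n\le b-1$ (which hold by the comparison principle), the translates $(u_n,v_n)(x+c_n\tau_n,\,t+\tau_n)$ converge, up to a further extraction, locally uniformly to an entire-in-time solution of the homogeneous system~\eqref{eq:alpha=1} --- the heterogeneity disappears in the limit because $\alpha$ is evaluated at $(c_n-s)\tau_n+x-st$ with $(c_n-s)\tau_n\ge\eta\tau_n\to+\infty$. Its $u$-component $u_\infty$ satisfies $u_\infty(0,0)\ge\delta_1/2>0$, yet $u_\infty(c_\infty\theta^*,\theta^*)=\lim_n u_n(c_nt_n,t_n)=0$ with $\theta^*:=\lim_n\theta_n\ge0$, so by the strong maximum principle $u_\infty\equiv0$ for $t\le\theta^*$, a contradiction.

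Hence $\theta_n\to+\infty$, and translating $(u_n,v_n)$ by $(c_n\tau_n,\tau_n)$ and extracting once more, I obtain an entire solution $(u_\infty,v_\infty)$ of~\eqref{eq:alpha=1} with $0\le u_\infty\le1$, $0\le v_\infty\le b-1$, $u_\infty(0,0)=\delta_1/2$, and, passing to the limit in $u_n(c_nt,t)\le\delta_1/2$ for $0\le t<\theta_n$,
$$u_\infty(c_\infty t,t)\le\frac{\delta_1}{2}\qquad\text{for all }t\ge0.$$
There are two cases. If $v_\infty\not\equiv0$, then $(u_\infty,v_\infty)$ has both components nontrivial and $c_\infty\in[0,\uns)$, so Lemma~\ref{lem0} yields $\limsup_{t\to+\infty}u_\infty(c_\infty t,t)\ge\delta'_1(c_\infty)>0$; choosing $\delta_1$ small enough from the start so that $\delta_1/2<\delta'_1(c_\infty)$ --- which is harmless, as Lemma~\ref{lem1} remains valid with any smaller positive constant --- this contradicts the last display. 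If instead $v_\infty\equiv0$, then $u_\infty$ solves the scalar KPP equation $u_t=d_1u_{xx}+r_1u(1-u)$ with $u_\infty(\cdot,0)\not\equiv0$, and since $c_\infty<s^*=2\sqrt{d_1r_1}$ the hair-trigger effect (Aronson--Weinberger~\cite{aw75}) forces $u_\infty(c_\infty t,t)\to1$ as $t\to+\infty$, again contradicting the last display. This contradiction proves Theorem~\ref{spprey}.

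I expect the main obstacle to be the compactness step producing the entire limit solution: one must verify that every ingredient entering it --- the uniform lower bound $\delta_1$ from Lemma~\ref{lem1}, the interior parabolic bounds used to pass to the limit, and the convergence of the translated equations to~\eqref{eq:alpha=1} --- is uniform as $c_n$ ranges over $[s+\eta,\uns-\eta]$, which is exactly where keeping the speeds bounded away from both $s$ and $\uns$ is essential. A secondary technical point is matching the persistence threshold $\delta_1/2$ coming from the heterogeneous problem with the homogeneous constant $\delta'_1$ of Lemma~\ref{lem0}, handled by shrinking $\delta_1$. Finally, observe that, just as in Lemmas~\ref{lem1} and~\ref{lem2}, only the nontriviality of $(u_0,v_0)$ is actually used in this argument, not the compactness of their supports.
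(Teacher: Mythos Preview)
Your overall strategy --- contradiction, extraction of an entire solution of the homogeneous system~\eqref{eq:alpha=1}, then a dichotomy via Lemma~\ref{lem0} or the hair-trigger effect --- matches the paper's. But there is a genuine gap in your construction of $t'_n$. You assert that the uniform version of Lemma~\ref{lem1} yields $t'_n\in(0,t_n)$ with $t'_n\to+\infty$ and $u_n(c_nt'_n,t'_n)\ge\delta_1/2$; however, $\limsup_{t\to+\infty}u_n(c_nt,t)\ge\delta_1$ only guarantees such times accumulating at $+\infty$, not one lying below the prescribed~$t_n$. Since the supports of $u_{0,n}$ are arbitrary (they may sit far from the ray $x=c_nt$), the threshold $\delta_1/2$ need not be reached at all along that ray before time $t_n$; and because $c_n=x_n/t_n$ itself depends on $t_n$, you cannot simply enlarge $t_n$ afterwards without changing $c_n$. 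This circularity is what makes the step fail as written.

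The paper avoids the issue by switching from the variable ray $t\mapsto(c_nt,t)$ to the vertical line $t\mapsto(x_n,t)$. It invokes Lemma~\ref{lem2} (the $\liminf$ version) at the \emph{fixed} speed $c'=\uns-\eta/2$, which is strictly larger than every $c_n$, so that $t'_n:=x_n/c'$ automatically satisfies $t'_n<t_n$ and $t'_n\to+\infty$, while $u_n(x_n,t'_n)=u_n(c't'_n,t'_n)\ge\delta_2(c')/2$ for large~$n$. One then defines $\tau_n$ along $t\mapsto u_n(x_n,t)$, translates by $(x_n,\tau_n)$, and applies Lemma~\ref{lem0} with $c=0$ to the limit. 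This also dispenses with the uniformity of $\delta_1(c)$ and the ad hoc shrinking of $\delta_1$ below $\delta'_1(c_\infty)$: the single threshold $\tfrac12\min\{\delta'_1(0),\delta_2(\uns-\eta/2)\}$ works throughout.
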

\begin{proof}
We fix $\eta$ and use another argument by contradiction, which is quite similar to the one in the proof of Lemma~\ref{lem2}. If the conclusion of Theorem~\ref{spprey} does not hold true, then one can find a sequence of initial data $(u_{0,n},v_{0,n}) \in X_1 \times X_{b-1}$ with $u_{0,n} \not \equiv 0 \not \equiv v_{0,n}$, as well as sequences {$\{t_{n,k}\}$ and $\{x_{n,k}\}$} with $t_{n,k} \to +\infty$ as $k \to +\infty$, and
$$x_{n,k} \in  \left[ (s+\eta) t_{n,k} , (\uns - \eta) t_{n,k} \right],$$
such that
$$ u_n (x_{n,k} , t_{n,k}) \leq \frac{1}{n},$$
for any {positive} integers $n$ and $k$. However, according to Lemma~\ref{lem2}, we have that
$$\liminf_{t \to +\infty} u_n \left( \left(\uns - {\eta}/{2} \right) t, t \right) \geq \delta_2 \left( \uns - {\eta}/{2} \right).$$
In particular, we can define another time sequence
$$t'_{n,k} := \frac{x_{n,k}}{\uns - {\eta}/{2}},$$
which is such that
$$t'_{n,k} < t_{n,k}, \quad \lim_{k \to +\infty} t'_{n,k} =  +\infty,$$
and
$$u_n ( x_{n,k} , t'_{n,k}) \geq \frac{\delta_2  \left( \uns - {\eta}/{2} \right)}{2}$$
for any $k$ large enough. For each $n$, we choose such a large $k$ and drop it from our notation for convenience. Then we can define
$$\tau_n := \sup \left\{ t  \geq t'_n \mid  u_n (c t,t) \geq \frac{\min \{ \delta'_1 (c), \delta_2 \left( \uns - {\eta}/{2} \right) \}}{2} \right\},$$
where $\delta'_1$ comes from Lemma~\ref{lem0}. As in the proof of Lemma~\ref{lem2}, we have that
$$t_n - \tau_n \to +\infty,$$
as $n \to +\infty$, and up to extraction of a subsequence
$$(u_n,v_n) (\cdot + c \tau_n , \cdot + \tau_n) \to (u_\infty, v_\infty)$$
in the locally uniform sense, where $(u_\infty, v_\infty)$ is an entire in time solution of \eqref{eq:alpha=1} such that
$$u_\infty (0,0) = \frac{\min \{ \delta'_1 (c), \delta_2 \left( \uns - {\eta}/{2} \right) \}}{2},\;
u_\infty (0,t) \leq \frac{\min \{ \delta'_1 (c), \delta_2 \left( \uns - \frac{\eta}{2} \right) \}}{2}\;\mbox{ for all $t \geq 0$.}$$
Regardless of whether $v_\infty \equiv 0$ or $v_\infty >0$, we reach a contradiction with either the result of Aronson and Weinberger~\cite{aw75}, or Lemma~\ref{lem0} from~\cite{dgm}. This concludes the proof.
\end{proof}

\subsection{Survival of the predator} We now turn to the persistence of the predator $v$ in the moving frames with speeds in the interval $(s, \uns)$, where we recall that $\uns = \min \{ s_*, s^*\}$.
\begin{theorem}\label{sppred}
Assume that $(u_0, v_0) \in X_1 \times X_{b-1}$ and that both $u_0$ and $v_0$ have nonempty compact supports. If {$s < \uns$,} then for any $\eta\in(0,(\uns-s)/2)$ there is a positive constant {$\kappa_2$}
(independent of $(u_0,v_0)$) such that
$$\liminf_{t \to +\infty} \{\inf_{(s+\eta)t \leq x\leq ( \uns - \eta) t} v(x,t)\}{\ge \kappa_2.}$$
\end{theorem}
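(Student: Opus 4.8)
The plan is to reproduce, for the predator, the three–step scheme used above for the prey: a \emph{limsup} estimate (the analogue of Lemma~\ref{lem1}), a \emph{liminf} estimate (the analogue of Lemma~\ref{lem2}), and finally the uniform lower bound over the intermediate moving range (the analogue of Theorem~\ref{spprey}). Throughout one has $s<c<\uns=\min\{s^*,s_*\}$, and the two strict inequalities $c<s^*$ and $c<s_*$ will be used at different points, which is exactly why the threshold is $\uns$. The first step I would carry out is: for each $c\in(s,\uns)$ there is $\delta_3(c)>0$ with $\limsup_{t\to\infty}v(ct,t)\ge\delta_3(c)$ for every nontrivial $(u_0,v_0)\in X_1\times X_{b-1}$. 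Arguing by contradiction as in Lemma~\ref{lem1}, one obtains data $(u_{0,n},v_{0,n})$ and times $t_n\to+\infty$ with $\sup_{|x|\le R,\ t\ge t_n}v_n(x+ct,t)\to0$ for every $R>0$, the upgrade from pointwise to locally uniform smallness coming (as in Lemma~\ref{lem1}) from extracting an entire solution of the homogeneous system \eqref{eq:alpha=1} — legitimate since $c>s$ forces $\alpha(\cdot-st)\to1$ along the moving frame — and the strong maximum principle. The genuinely new point is a \emph{conditional saturation of the prey}: under this smallness of $v_n$, and since $\alpha(x-st)\to1$ on $\{|x-ct|\le R\}$, the prey satisfies $\partial_t u_n\ge d_1\partial_{xx}u_n+r_1u_n(1-\delta-u_n)$ there; combining this with the fact that $u_n$ stays bounded away from $0$ near $ct$ (Lemma~\ref{lem2}) and a standard moving–window sub-solution argument for the KPP equation — valid because $c<s^*$ — one obtains, for every $\delta'>0$, some $R'>0$ and $T_n\ge t_n$ with $u_n\ge1-\delta'$ on $\{|x-ct|\le R'\}\times[T_n,+\infty)$ for $n$ large.

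With both $u_n\ge1-\delta'$ and $v_n\le\delta$ on that moving window, the predator obeys $\partial_t v_n\ge d_2\partial_{xx}v_n+r_2\gamma\,v_n$ with $\gamma$ as close to $b-1$ as we wish. Since $c<s_*=2\sqrt{d_2r_2(b-1)}$, for $\delta,\delta'$ small and $R'$ large the principal Dirichlet eigenvalue $\Lambda:=r_2\gamma-\frac{c^2}{4d_2}-\frac{d_2\pi^2}{4(R')^2}$ of $d_2\partial_{xx}+c\,\partial_x+r_2\gamma$ on $(-R',R')$ is positive; if $\phi_1>0$ denotes the associated eigenfunction, then $\underline v(x,t):=\varepsilon e^{\Lambda t}\phi_1(x-ct)$ solves $\partial_t\underline v=d_2\partial_{xx}\underline v+r_2\gamma\underline v$, vanishes on $\{|x-ct|=R'\}$, and (choosing $\varepsilon$ small, using $v_n(\cdot,T_n)>0$) lies below $v_n$ at $t=T_n$, so by comparison on the moving window $v_n(ct,t)\ge\varepsilon e^{\Lambda t}\phi_1(0)\to+\infty$, contradicting $v_n\le b-1$. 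This is the eigenfunction step at the end of Lemma~\ref{lem1}, now for $v$. Having this, the \emph{liminf} estimate $\liminf_{t\to\infty}v(ct,t)\ge\delta_4(c)>0$ for $c\in(s,\uns)$ follows by the trapping argument of Lemma~\ref{lem2}: one lets $\tau_n$ be the last time the moving value of $v_n$ exceeds $\tfrac12\min\{\delta_3(c),\delta_1''(c)\}$, shows $t_n-\tau_n\to+\infty$ by parabolic compactness, passes to an entire solution $(u_\infty,v_\infty)$ of \eqref{eq:alpha=1} with $v_\infty(0,0)=\tfrac12\min\{\delta_3(c),\delta_1''(c)\}$ and $v_\infty(ct,t)\le\tfrac12\min\{\delta_3(c),\delta_1''(c)\}$ for $t\ge0$, and derives a contradiction: $v_\infty\equiv0$ is ruled out by the strong maximum principle, $v_\infty>0$ forces $u_\infty\not\equiv0$ (otherwise $v_\infty$ solves a logistic equation with negative linear rate and vanishes), and then the homogeneous predator–persistence estimate $\limsup_t v_\infty(ct,t)\ge\delta_1''(c)$ for $c\in[0,\uns)$ — the analogue of Lemma~\ref{lem0}, available from~\cite{dgm} — is violated.

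Finally, Theorem~\ref{sppred} is obtained exactly as Theorem~\ref{spprey}: negating the conclusion yields compactly supported nontrivial data $(u_{0,n},v_{0,n})$ and points $x_{n,k}\in[(s+\eta)t_{n,k},(\uns-\eta)t_{n,k}]$ with $v_n(x_{n,k},t_{n,k})\le1/n$ and $t_{n,k}\to+\infty$; applying the liminf estimate at speed $c=\uns-\tfrac{\eta}{2}$ gives $t'_{n,k}:=x_{n,k}/(\uns-\tfrac{\eta}{2})<t_{n,k}$ at which $v_n(x_{n,k},t'_{n,k})\ge\tfrac12\delta_4(\uns-\tfrac{\eta}{2})$; then, tracking $v_n$ at the fixed abscissa $x_{n,k}$, one lets $\tau_n$ be the last time before $t_{n,k}$ at which $v_n(x_{n,k},\cdot)$ exceeds $\tfrac12\min\{\delta_4(\uns-\tfrac{\eta}{2}),\delta_1''(0)\}$, shows $t_{n,k}-\tau_n\to+\infty$, and extracts an entire solution of \eqref{eq:alpha=1} — legitimate since $x_{n,k}-s\tau_n\ge\eta t_{n,k}\to+\infty$ — with $v_\infty(0,0)=\tfrac12\min\{\delta_4(\uns-\tfrac{\eta}{2}),\delta_1''(0)\}$ and $v_\infty(0,t)$ bounded above by the same quantity for $t\ge0$; this contradicts either the strong maximum principle or the homogeneous persistence estimate at speed $0$, so the conclusion holds with $\kappa_2=\tfrac12\min\{\delta_4(\uns-\tfrac{\eta}{2}),\delta_1''(0)\}$, independent of $(u_0,v_0)$.

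The main obstacle is the conditional saturation of the prey in the first step: one must show that smallness of $v_n$ near the moving frame propagates $u_n$ up to near $1$ \emph{uniformly in $n$ and for all large $t$} on a fixed moving window, which is a moving–window KPP spreading statement and is where $c<s^*$ enters, complementing the $c<s_*$ needed for the predator sub-solution to grow. A secondary difficulty, absent in the prey case where the degenerate limit $v_\infty\equiv0$ was handled by the scalar hair–trigger effect of Aronson–Weinberger~\cite{aw75}, is that here the relevant degenerate limit is $v_\infty>0$ with $u_\infty\not\equiv0$, which can be excluded only by invoking the (substantially deeper) persistence result for the homogeneous prey–predator system of~\cite{dgm}.
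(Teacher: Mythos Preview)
Your three–step scheme (a limsup lemma, a liminf lemma, then the uniform bound) is exactly the structure the paper uses, carried out there as Lemmas~\ref{lem1_v} and~\ref{lem2_v} before concluding Theorem~\ref{sppred} as for Theorem~\ref{spprey}. Your proposal is correct, but two of the intermediate arguments differ from the paper's and the comparison is instructive.

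In the limsup step, the paper's route to the prey saturation is shorter than your moving–window KPP argument: since Theorem~\ref{spprey} is already available, any entire limit $(u_\infty,v_\infty)$ of~\eqref{eq:alpha=1} obtained by shifting along the frame of speed~$c$ automatically satisfies $\inf_{\R^2}u_\infty\ge\kappa_1>0$; the contradiction hypothesis gives $v_\infty\equiv0$, so $u_\infty$ is an entire KPP solution uniformly bounded away from~$0$, hence $u_\infty\equiv1$. This dissolves what you called the ``main obstacle'' without any uniform-in-$n$ sub-solution construction.

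In the liminf step, by contrast, your route through a predator analogue of Lemma~\ref{lem0} from~\cite{dgm} is actually the more robust one. The paper's shortcut asserts, from $\inf_{\R^2}u_\infty>0$ alone, that $u_\infty\equiv1$, and then applies Aronson--Weinberger to the resulting scalar equation for~$v_\infty$. But as written this is incompatible with $v_\infty(0,0)=\delta_3(c)/2>0$: substituting $u_\infty\equiv1$ into the $u$-equation of~\eqref{eq:alpha=1} forces $-r_1av_\infty\equiv0$, i.e.\ $v_\infty\equiv0$. Your argument---rule out $v_\infty\equiv0$ by the strong maximum principle, rule out $u_\infty\equiv0$ since that makes $v_\infty$ a bounded entire solution of an equation with negative linear part, and then invoke the homogeneous predator persistence estimate from~\cite{dgm}---closes the step cleanly.
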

The method is the same as for the prey, though it is made slightly easier {by} the fact that we already know that $u$ can never approach 0 in these moving frames as $t \to +\infty$. In particular, before proving Theorem~\ref{sppred}, we prepare two lemmas.
\begin{lemma}\label{lem1_v}
Assume that $\uns>s$. Then for any $c \in (s, \uns)$ there exists $\delta_3 (c) >0$ such that, for any initial data satisfying $(u_0,v_0)\in X_1\times X_{b-1}$ with $u_0\not\equiv 0$ and $v_0 \not \equiv 0$,
the corresponding solution $(u,v)$ of \eqref{spp} satisfies
$$\limsup_{t \to +\infty} {v}(ct,t)  \geq \delta_3 (c).$$
\end{lemma}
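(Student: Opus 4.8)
The plan is to argue by contradiction, adapting the scheme used for the prey in Lemma~\ref{lem1} while exploiting Lemma~\ref{lem2}, which guarantees that the prey cannot vanish in the moving frame of speed $c$. Note first that $c < \uns$ means both $c<s^*$ and $c<s_*$, and these two inequalities will be used in two different places. Suppose then that there are nontrivial initial data $(u_{0,n},v_{0,n})\in X_1\times X_{b-1}$ whose solutions $(u_n,v_n)$ satisfy $\limsup_{t\to+\infty}v_n(ct,t)\to 0$ as $n\to\infty$, and pick a time sequence $t_n\to+\infty$ with $\sup_{t\ge t_n}v_n(ct,t)\to 0$. The first step is to upgrade this to the statement that, for every $R>0$, one has $\sup_{|x|\le R,\,t\ge t_n}v_n(x+ct,t)\to 0$ as $n\to\infty$. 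This follows exactly as in the proof of Lemma~\ref{lem1}: if it failed, standard parabolic estimates would produce an entire-in-time solution of the homogeneous system~\eqref{eq:alpha=1} (here $c>s$ is crucial, so that $\alpha(\cdot-st)\to 1$ in the moving frame) whose $v$-component vanishes on $\{0\}\times[0,+\infty)$ but is not identically $0$, contradicting the strong maximum principle.

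The key new step is to convert this smallness of $v_n$ near the frame into the prey being \emph{close to its carrying capacity $1$} there — not merely close to the coexistence value $u_*$. Fix $R$ large and $\ep_0$ small. For $n$ large, the previous step gives $av_n\le a\ep_0$ on the strip $\{|x-ct|\le R\}$, and since $t_n\to+\infty$ with $c>s$ we also have $\alpha(x-st)\ge 1-\ep_0$ there for $t\ge t_n$; hence $u_n$ is a supersolution on that strip of the logistic equation $w_t=d_1w_{xx}+r_1w(1-(1+a)\ep_0-w)$. Since $u_{0,n}\not\equiv 0$ and, by Lemma~\ref{lem2}, $u_n$ does not decay to $0$ near the frame, a parabolic Harnack argument bounds $u_n(\cdot,t_n)$ from below inside the strip; comparing with the solution of this logistic equation on the moving domain $\{|x-ct|<R\}$ with zero Dirichlet data — whose principal eigenvalue is positive once $R$ is large, thanks to $c<s^*$ — and letting $t\to+\infty$, we obtain $u_n(x,t)\ge 1-\delta(R,\ep_0)$ for $|x-ct|\le R/2$ and $t$ large, where $\delta(R,\ep_0)\to 0$ as $R\to+\infty$ and $\ep_0\to 0$.

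Consequently, in the inner strip $\{|x-ct|\le R/2\}$ and for $t$ large the predator satisfies $(v_n)_t\ge d_2(v_n)_{xx}+r_2v_n(g-v_n)$ with an effective growth rate $g=(b-1)-O(\delta(R,\ep_0)+\ep_0)$. Here we invoke $c<s_*=2\sqrt{d_2r_2(b-1)}$: choosing $R$ large and $\ep_0$ small makes $r_2g>c^2/(4d_2)$, so that on an interval of large but fixed length $\ell\le R$ there is a compactly supported subsolution moving with speed $c$, of the form $Ae^{\mu(t-t^*)}e^{-\frac{c}{2d_2}(x-ct)}\varphi_\ell(x-ct)$ with $\mu>0$ and $\varphi_\ell$ a principal Dirichlet eigenfunction, valid as long as it stays below a fixed level $V^*>\ep_0$. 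Seeding it at a large time $t^*$ underneath $v_n$ — which is positive near the frame since $v_{0,n}\not\equiv 0$ — and comparing, $v_n$ must reach the value $V^*$ at some point within distance $R$ of the frame at arbitrarily large times, contradicting $\sup_{|x-ct|\le R,\,t\ge t_n}v_n\le\ep_0$ for $n$ large. This contradiction produces the desired $\delta_3(c)>0$, independent of the initial data.

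The main obstacle is precisely the second step: one must show that scarcity of the predator forces the prey back up to near its \emph{full} capacity $1$, not just to $u_*<1$. This is unavoidable, because the predator's maximal speed $s_*$ is governed by its per-capita growth $b-1$ at saturated prey; an a~priori bound of the form $u_n\gtrsim u_*$ would only sustain speeds up to $2\sqrt{d_2r_2v_*}<s_*$ and would leave a gap near $s_*$. Making the logistic comparison rigorous on the moving, non-cylindrical domain — the eigenvalue threshold at $c=s^*$, the Harnack lower bound for $u_n$, and the passage $R\to+\infty$, in the spirit of the methods of~\cite{dggs2021,dgm} — is the technical heart of the argument.
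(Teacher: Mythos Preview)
Your proposal is correct and follows the same three--step skeleton as the paper's proof: (i) if the conclusion fails then $v_n\to 0$ uniformly on moving strips $\{|x-ct|\le R\}$ via the strong maximum principle; (ii) this forces $u_n\to 1$ on those strips; (iii) then the $v$-equation has linear growth rate close to $r_2(b-1)$ there, so a principal-eigenfunction subsolution in the moving frame (using $c<s_*$) drives $v_n$ to infinity, a contradiction. Your emphasis that one really needs $u_n$ near its full capacity~$1$, not merely near~$u_*$, is exactly the point.

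The only notable difference is in the implementation of step~(ii). The paper argues by compactness: if $u_n(x_n,t'_n)$ stayed bounded away from~$1$ along some sequence, extract an entire solution $(u_\infty,v_\infty)$ of the homogeneous system~\eqref{eq:alpha=1}; since $v_\infty\equiv 0$ by step~(i), $u_\infty$ is an entire Fisher--KPP solution, and Theorem~\ref{spprey} (already proved) gives $\inf u_\infty>0$, which forces $u_\infty\equiv 1$, a contradiction. You instead quote only Lemma~\ref{lem2} for a pointwise lower bound on $u_n(ct,t)$, spread it over the strip via parabolic Harnack, and then run a direct logistic comparison on the moving Dirichlet interval $\{|x-ct|<R\}$, letting $R\to\infty$ and $\ep_0\to 0$. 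Both routes are valid; the paper's is shorter and softer (one limit, one Liouville-type identification), while yours is more constructive but carries more bookkeeping (the $\delta(R,\ep_0)$ error, the Harnack constant, and the convergence to the Dirichlet steady state). Since Theorem~\ref{spprey} precedes this lemma in the paper, you could invoke it directly in place of Lemma~\ref{lem2} plus Harnack and shorten your argument.
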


\begin{proof}
We let $c\in(s,\uns)$, and assume by contradiction that there exists a sequence of solutions $\{(\tilde{u}_n , \tilde{v}_n)\}$ with initial data $\{(u_{0,n},v_{0,n})\}\subset X_1\times X_{b-1}$, with
$u_{0,n}\not\equiv 0$ and $v_{0,n} \not \equiv 0$, such that
$$\lim_{n \to +\infty} \limsup_{t \to +\infty}  \tilde{v}_n (ct,t)  = 0.$$
Then, for each $n$, we can choose $t_n$ large enough such that
\be\label{eqvan}
\lim_{n \to +\infty} \sup_{ t\geq t_n} \tv_n(ct,t) = 0.
\ee
By passing to the limit as $n\to\infty$, and applying the strong maximum principle, we also have
\be\label{v-small}
\limsup_{n\to\infty}\{\sup_{t\ge t_n,|x-ct|\le R}\tv_n(x,t)\}=0
\ee
for any $R>0$.

Next, we claim that
\be\label{claimR}
\limsup_{n\to\infty}\{\sup_{t\ge t_n,|x-ct|\le R}\tu_n(x,t)\}=1,
\ee
for any $R>0$.
For contradiction, we assume that there is a sequence $\{(x_n,t'_n)\}$ with $t'_n\ge t_n$ and $x_n\in[ct'_n-R,ct'_n+R]$ such that $\limsup_{n\to\infty}\tu_n(x_n,t'_n)<1$.
Then, by standard parabolic estimates and extracting a subsequence, we have that $(\tu_n,\tv_n)(x+x_n,t+t'_n)$ converges to $(u_\infty,v_\infty)$ as $n\to\infty$ for some entire solution $(u_\infty,v_\infty)$ of \eqref{eq:alpha=1}.

Since $v_\infty(0,t)=0$ for all $t>0$, by the strong maximum principle we get that $v_\infty\equiv 0$. In particular, $u_\infty$ satisfies
\beaa
(u_\infty)_t=d_1(u_\infty)_{xx}+r_1u_\infty(1-u_\infty),\; (x,t)\in\bR^2.
\eeaa
On the other hand, by Theorem~\ref{spprey}, we have that
$$\inf_{ (x,t) \in \R^2 } u_\infty(x,t) >0.$$
This implies that $u_\infty\equiv 1$, a contradiction to $u_\infty(0,0)<1$ by our choices of $x_n$ and $t'_n$. Hence~\eqref{claimR} holds.

Now, for any small $\eta>0$ and large $R>0$, we have
\beaa
(\tv_n)_t\ge d_2(\tv_n)_{xx}+r_2(b-1-2\eta)\tv_n,\; |x-ct_n|\le R,\, t\ge t_n,
\eeaa
for any $n$ large enough. As in the proof of Lemma~\ref{lem1}, we infer that $\tilde{v}_n (ct,t) \to +\infty$ as $t \to +\infty$, a contradiction to~\eqref{v-small}. The lemma is thereby proved.
\end{proof}
\begin{lemma}\label{lem2_v}
Assume that $\uns>s$.
Then for any $c \in (s, \uns)$ there exists $\delta_4 (c) >0$ such that, for any initial data satisfying $(u_0,v_0)\in X_1\times X_{b-1}$ with $u_0\not\equiv 0$ and $v_0 \not \equiv 0$, the corresponding solution $(u,v)$ of \eqref{spp} satisfies
$$\liminf_{t \to +\infty} v(ct,t)  \geq \delta_4 (c).$$
\end{lemma}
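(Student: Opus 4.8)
The plan is to run the same contradiction scheme as in the proof of Lemma~\ref{lem2}, with Lemma~\ref{lem1_v} playing the role of Lemma~\ref{lem1} and with the homogeneous counterpart of Lemma~\ref{lem1_v} — valid for the system \eqref{eq:alpha=1} and obtained by the identical argument, cf.~\cite{dgm} — playing the role of Lemma~\ref{lem0}; I denote by $\delta_3'(c)>0$ the constant it provides at frame speed $c$. Suppose Lemma~\ref{lem2_v} fails for some $c\in(s,\uns)$. Taking $\delta_4=1/n$ in the negation, one gets admissible data $(u_{0,n},v_{0,n})\in X_1\times X_{b-1}$ with $u_{0,n}\not\equiv 0\not\equiv v_{0,n}$ whose solutions $(u_n,v_n)$ satisfy $\liminf_{t\to\infty}v_n(ct,t)<1/n$, hence $v_n(ct,t)<1/n$ at arbitrarily large times, while Lemma~\ref{lem1_v} gives $\limsup_{t\to\infty}v_n(ct,t)\ge\delta_3(c)$ for all $n$. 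After replacing $\delta_3(c)$ by $\min\{\delta_3(c),\delta_3'(c)\}$ if needed, I pick $t'_n\to\infty$ with $v_n(ct'_n,t'_n)\ge\delta_3(c)/2$ and then $t_n>t'_n$ with $v_n(ct_n,t_n)\to 0$.

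Next I set $\tau_n:=\sup\{t\in[t'_n,t_n]:v_n(ct,t)\ge\delta_3(c)/2\}$, so that for $n$ large $\tau_n\in[t'_n,t_n)$, $v_n(c\tau_n,\tau_n)=\delta_3(c)/2$ by continuity, and $v_n(ct,t)<\delta_3(c)/2$ on $(\tau_n,t_n]$. I then claim $t_n-\tau_n\to\infty$: otherwise, along a subsequence $t_n-\tau_n\to\theta\ge 0$, and by interior parabolic estimates and a diagonal extraction $(u_n,v_n)(\cdot+ct_n,\cdot+t_n)$ converges locally uniformly to an entire solution $(u_\infty,v_\infty)$ of \eqref{eq:alpha=1} — this is where $c>s$ enters, since then the shifted heterogeneity $\alpha(x-st+(c-s)t_n)$ tends to $\alpha(\infty)=1$ — with $v_\infty(0,0)=\lim v_n(ct_n,t_n)=0$, hence $v_\infty\equiv 0$ by the strong maximum principle, yet $v_\infty(-c\theta,-\theta)=\lim v_n(c\tau_n,\tau_n)=\delta_3(c)/2>0$, a contradiction.

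Since $t_n-\tau_n\to\infty$, I extract a further subsequence so that $(u_n,v_n)(\cdot+c\tau_n,\cdot+\tau_n)$ converges locally uniformly to an entire solution $(u_\infty,v_\infty)$ of \eqref{eq:alpha=1} (again by $c>s$), with $v_\infty(0,0)=\delta_3(c)/2$ and $v_\infty(ct,t)\le\delta_3(c)/2$ for every $t\ge 0$ (because $(\tau_n,t_n)$ ultimately covers any fixed $t\ge 0$). By the strong maximum principle $v_\infty>0$, so $v_\infty(\cdot,0)\not\equiv 0$; and $u_\infty(\cdot,0)\not\equiv 0$ as well, for if $u_\infty(\cdot,0)\equiv 0$ then $u_\infty\equiv 0$ on all of $\R^2$ (strong maximum principle on successive time slabs), whence $v_\infty$ alone satisfies $\partial_t v_\infty=d_2(v_\infty)_{xx}+r_2v_\infty(-1-v_\infty)$, and comparison with $\dot w=-r_2w$ on $[-T,0]$ starting from $b-1$ gives $v_\infty(\cdot,0)\le(b-1)e^{-r_2T}\to 0$, forcing $v_\infty\equiv 0$, which contradicts $v_\infty(0,0)>0$. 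Thus $(u_\infty,v_\infty)$ is the solution of \eqref{eq:alpha=1} issued at $t=0$ from the nontrivial pair $(u_\infty(\cdot,0),v_\infty(\cdot,0))$, and since $0<c<\uns$ the homogeneous counterpart of Lemma~\ref{lem1_v} yields $\limsup_{t\to\infty}v_\infty(ct,t)\ge\delta_3'(c)\ge\delta_3(c)>\delta_3(c)/2$, contradicting $v_\infty(ct,t)\le\delta_3(c)/2$ for $t\ge 0$. This finishes the proof.

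The main obstacles are the compactness/limit step — verifying that the $(c\tau_n,\tau_n)$-shifts converge to a genuine entire solution of the $\alpha\equiv 1$ problem, which hinges entirely on $c>s$ — and the bookkeeping of the three time scales $t'_n<\tau_n<t_n$, exactly as in Lemma~\ref{lem2}; the mismatch of constants is dealt with by shrinking $\delta_3(c)$ at the outset so that the homogeneous lower bound $\delta_3'(c)$ strictly exceeds $\delta_3(c)/2$. No analytic input beyond Lemma~\ref{lem1_v}, Theorem~\ref{spprey}, and the homogeneous results of~\cite{dgm} is needed.
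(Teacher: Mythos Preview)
Your argument is correct, and the extraction of the entire limiting solution $(u_\infty,v_\infty)$ of \eqref{eq:alpha=1} via the three time scales $t'_n<\tau_n<t_n$ matches the paper's scheme exactly. The two proofs diverge only at the final contradiction. The paper invokes Theorem~\ref{spprey} to get $\inf_{\R^2}u_\infty>0$, asserts that this forces $u_\infty\equiv 1$, and then applies Aronson--Weinberger to the resulting scalar Fisher--KPP equation for $v_\infty$ to obtain $v_\infty(ct,t)\to b-1$. You instead establish only $u_\infty\not\equiv 0$ via an elementary ODE comparison (if $u_\infty\equiv 0$ the entire bounded $v_\infty$ would solve an equation with strictly negative growth rate and hence vanish), and then feed the nontrivial pair $(u_\infty(\cdot,0),v_\infty(\cdot,0))$ into a homogeneous predator-persistence lemma from~\cite{dgm}, mirroring precisely the role Lemma~\ref{lem0} plays in the proof of Lemma~\ref{lem2}. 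Your route is thus more symmetric with Lemma~\ref{lem2} and sidesteps the implication ``$\inf u_\infty>0\Rightarrow u_\infty\equiv 1$'', which as written is not literally true for the coupled system (the coexistence state $(u_*,v_*)$ is an entire solution with $u_*<1$), so the paper's version tacitly needs something more said there. The trade-off is that you import an additional external result from~\cite{dgm} rather than reducing to the classical scalar theory. One small remark: despite your closing sentence, Theorem~\ref{spprey} is never actually invoked in your argument and can be dropped from the list of inputs.
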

\begin{proof}
The method is the same as that of Lemma~\ref{lem2}. Fixing $c \in (s, \uns)$ and proceeding by contradiction, we find an entire solution $(u_\infty, v_\infty)$ of \eqref{eq:alpha=1}, such that also
\begin{equation}\label{almost_vinfty}
v_\infty (0,0) = \frac{\delta_3 (c)}{2}, \quad v_\infty (ct,t) \leq \frac{\delta_3 (c)}{2},
\end{equation}
for all $t \geq 0$. Moreover, by Theorem~\ref{spprey} we have that
$$\inf_{ (x,t) \in \R^2} u_\infty (x,t) >0,$$
which in turn implies that $u_\infty \equiv 1$. Thus
$$(v_\infty)_t = d_2 (v_\infty)_{xx} + r_2 v_\infty (b-1 - v_\infty).$$
Since $c < s^*$, by the classical result of Aronson and Weinberger~\cite{aw75}, we have that
$$v_\infty (ct,t) \to b-1,$$
as $t \to +\infty$, a contradiction with \eqref{almost_vinfty}.
\end{proof}
Theorem~\ref{sppred} now follows from the previous lemmas. Since the proof is similar to that of Theorem~\ref{spprey}, we omit the details.

\subsection{Proof of Theorem~\ref{svpos}.}

Now, we can finish the proof of Theorem~\ref{svpos}. Indeed, take any sequence $t_n \to +\infty$ and $x_n \in [(s+\eta) t, (\uns - \eta) t]$ such that $(u,v) (\cdot + x_n, \cdot + t_n)$ converges as $n \to +\infty$.
Then, putting together Theorems~\ref{spprey} and~\ref{sppred}, we get as $n \to +\infty$ an entire in time solution $(u_\infty,v_\infty)$ of \eqref{eq:alpha=1}, such that
$$\kappa_1 \leq u_\infty (x,t) \leq 1,\quad \kappa_2 \leq v_\infty (x,t) \leq b-1,$$
for all $(x,t) \in \R^2$. However, according to \cite[Lemma 4.1]{dggs2021}, it follows from a Lyapunov argument that any such entire in time solution must coincide with
$$ (u_*,v_*) = \left( \frac{1+a}{1+ab} , \frac{b-1}{1+ab} \right).$$
By standard parabolic estimates that ensure the sequential compactness of the set of time and space shifts of the solution, one may finally infer that Theorem~\ref{svpos} holds true.


\end{document}